\tikzset{>=to}
\newcommand{\IndexC}[2][d]{\leftsup{\circlearrowleft}{\mathbf{I}}^{#1}_{#2}}
\newcommand{\leftsup}[2]{{\vphantom{#2}}^{#1}{#2}}
\newtheorem{theorem}{Theorem}
\newtheorem{lemma}[theorem]{Lemma}
\newtheorem{proposition}[theorem]{Proposition}
\newtheorem{corollary}[theorem]{Corollary}
\theoremstyle{definition}
\newtheorem{definition}[theorem]{Definition}
\newtheorem{observation}[theorem]{Observation}
\newtheorem{notation}[theorem]{Notation}
\theoremstyle{remark}
\newtheorem{example}[theorem]{Example}
\newtheorem{remark}[theorem]{Remark}
\DeclareMathOperator{\End}{End}
\DeclareMathOperator{\Hom}{Hom}
\DeclareMathOperator{\Ext}{Ext}
\DeclareMathOperator{\Endo}{End} \renewcommand{\End}{\Endo}
\DeclareMathOperator{\modules}{mod} \renewcommand{\mod}{\modules}
\DeclareMathOperator{\add}{add}
\DeclareMathOperator{\ind}{ind}
\newcommand{\Db}{{\rm D^b}}
\newcommand{\D}{{\rm D}\!}
\newcommand{\iso}{\cong}
\newcommand{\sus}{\Sigma}
\newcommand{\mult}[2]{[#2\colon#1]}
\newcommand{\Gr}{\operatorname{K}_0^{\text{split}}}
\renewcommand{\index}{\operatorname{index}}
\newcommand{\sh}{\operatorname{sh}}
\newcommand{\vind}{\overrightarrow{\index}}
\newcommand{\sign}{\operatorname{sign}}
\title[Coefficients for higher cluster categories]{Tropical coefficient dynamics for higher-dimensional cluster categories}
\author[Oppermann]{Steffen Oppermann}\address[S. Oppermann]{Department of mathematical sciences, Norwegian University of Science and Technology}\email{steffen.oppermann@ntnu.no}
\author[Thomas]{Hugh Thomas}\address[H. Thomas]{Département de mathématiques, Université du Québec à Montréal}\email{thomas.hugh\_r@uqam.ca}
\begin{document}

\begin{abstract}
We show that the index in higher-dimensional cluster categories mutates according to a higher-dimensional version of tropical coefficient dynamics. 
\end{abstract}

\maketitle

\section{Introduction}

The simplest cluster algebras are those
which admit a categorification built from a 1-representation
finite (i.e., representation finite and hereditary) algebra.
There is a theory of $d$-representation finite algebras for $d>1$
defined in {\cite{IO}}.
This seems a promising place to go looking for ``higher'' analogues
of cluster phenomena.  

In a previous paper \cite{OT}, we focussed on the $(d-1)$-higher Auslander
algebra of linearly oriented $A_n$, written $A^d_n$, a particular example of $d$-representation finite algebras.
We showed that the basic cluster tilting objects in $\mathscr C_n^{d}$,
the $d$-cluster tilting
category of $A_n^d$ correspond to triangulations of a $2d$-dimensional cyclic
polytope with $n+2d+1$ vertices. (This generalizes the well-known
description of the basic cluster tilting objects in $\mathscr C_n^1$ as corresponding
to triangulations of an $(n+3)$-gon.)
Mutation of cluster tilting objects corresponds to bistellar flip of
triangulations, the appropriate higher-dimensional generalization of
diagonal flips.
We did not find an analogue of a cluster algebra associated to this model.
However, we showed that a tropical version of the cluster algebra
exchange relations has a geometrically meaningful interpretation.

Within the network of ideas connected to cluster algebras,
as well as the cluster variable
dynamics ($X$-dynamics), there is also the system of coefficient dynamics ($Y$-dynamics), which can
likewise be tropicalized. It turns out that tropical coefficient dynamics play
an important role in the study of cluster algebras, because they govern the
evolution of $g$-vectors, which are used to define bases of cluster algebras.
The goal of this paper is to exhibit a
higher-dimensional version of tropical coefficient dynamics.  It turns out
that while some of our construction goes through for any positive $d$, we
get the best results when $d$ is odd. We therefore assume this for the remainder
of the introduction.  Note that for our main results, we do not
assume that our ambient category is associated to the higher Auslander algebra
$A^d_n$ --- our results apply to any 2-Calabi-Yau $(d+2)$-angulated
category, including $\mathscr C^d_n$ as a special case.

Let $\mathscr C$ be a 2-Calabi-Yau $(d+2)$-angulated category. 
Let $T$ be a
cluster tilting object.  Any object $X$ in $\mathscr C$ admits a resolution  whose
terms are in $\add T$, i.e., a $(d+2)$-angle of the form:  
$$ T_{d}\rightarrow T_{d-1} \rightarrow \dots \rightarrow T_0 \rightarrow X \rightarrow \Sigma T_d$$
with $T_i\in \add T$ for all $i$.  We define
$$\index_T(X)=\sum_{i=0}^{d} (-1)^i [T_i]$$
as an element of $\Gr(\add T)$ (i.e., the free abelian group on the indecomposable summands of $T$).

Now suppose that $T^*$ is a cluster tilting object obtained from $T$ by a
single mutation.  The main result of this paper shows how to calculate
$\ind_{T^*}(X)$ from $\ind_T(X)$ (using no other information about $X$).
The result can be summarized in a phrase by saying that the index
satisfies a higher analogue of tropical coefficient dynamics.

\begin{theorem}
Let \( T \) be a cluster tilting object in a 2-Calabi-Yau \( (d+2)\)-angulated category, for some odd \(d\). Let \( T^* \) be obtained from \( T \) by a single mutation, that is by replacing some indecomposable summand \( E_m \) of \( T \) by a different indecomposable \( E_m^{\star} \). Assume there is no loop in the quiver of \( T \) at the vertex corresponding to \( E_m \).

\begin{itemize}
\item There are exchange \( (d+2) \)-angles
\[ \begin{tikzcd}[sep=small,cramped] \sus^{-1} E_m \ar[r] & E_m^{\star} \ar[r] & \overline{T}_d \ar[r] & \overline{T}_{d-1} \ar[r] & \cdots \ar[r] & \overline{T}_1 \ar[r] & E_m \end{tikzcd} \]
and
\[ \begin{tikzcd}[sep=small,cramped] E_m \ar[r] & \overline{T}^1 \ar[r] & \overline{T}^2 \ar[r] & \cdots \ar[r] & \overline{T}^d \ar[r] & E_m^{\star} \ar[r] & \sus E_m \end{tikzcd} \]
with the \( \overline{T}_i \) and \( \overline{T}_i \) in \( \add T / E_m \).
\item
For any rigid object \( X \), the index of \( X \) with respect to \( T^* \) is obtained from the index of \( X \) with respect to \( T \) by replacing \( [E_m] \) by
\begin{align*} - [E_m^{\star}]  - \sum_{i} (-1)^i [\overline{T}_i] & \qquad \textrm{ if the coefficient of $[E_m]$ is positive}\\
  - [E_m^{\star}]  - \sum_{i} (-1)^ i [\overline{T}^i] & \qquad \textrm{ if the coefficient of $[E_m]$ is negative.}
\end{align*}

\end{itemize}

\end{theorem}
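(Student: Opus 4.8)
The plan is to treat the two bullet points in turn, reducing everything to the behaviour of the index on the single exchanged summand $E_m$.

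For the first bullet point, I would obtain the two exchange $(d+2)$-angles from the mutation theory of cluster tilting objects in a 2-Calabi-Yau $(d+2)$-angulated category. Starting from $E_m$ and forming iterated minimal right $\add(T/E_m)$-approximations (equivalently, completing to a $(d+2)$-angle), one produces a candidate for $E_m^{\star}$ together with the intermediate terms $\overline{T}_1,\dots,\overline{T}_d$; the no-loop hypothesis at the vertex of $E_m$ is exactly what guarantees that no copy of $E_m$ survives in these terms, i.e.\ that they lie in $\add(T/E_m)$. The second angle is then produced dually, either by running the same construction with left approximations or by applying the 2-Calabi-Yau (Serre) duality to the first; the self-duality of the situation is what forces the two angles into the symmetric shapes displayed. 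I expect this part to be essentially formal once the approximation-theoretic input is in place.

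For the second bullet point, the key observation is that $\index_{T^{\star}}$ and $\index_T$ agree on $\add(T/E_m)=\add(T^{\star}/E_m^{\star})$, since any object there is its own $T$- and $T^{\star}$-resolution; hence only the coordinate of $[E_m]$ can change, and it suffices to identify the two replacement vectors. I would read the first exchange angle, rotated to
\[ E_m^{\star}\to \overline{T}_d\to\cdots\to\overline{T}_1\to E_m\to \sus E_m^{\star}, \]
as a genuine $T^{\star}$-resolution of $E_m$. Computing $\index_{T^{\star}}(E_m)=\sum_i(-1)^i[T^{\star}_i]$ from it, and using that $d$ is odd so that the leading contribution $(-1)^d[E_m^{\star}]$ is $-[E_m^{\star}]$, yields exactly $-[E_m^{\star}]-\sum_i(-1)^i[\overline{T}_i]$, the positive replacement. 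Running the same computation with the second exchange angle produces the negative replacement. This is the step where the oddness of $d$ is used in an essential way: for even $d$ the sign of $[E_m^{\star}]$ would come out wrong.

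It then remains to explain why a single, piecewise-linear substitution rule, selected by the sign of the coefficient of $[E_m]$, computes $\index_{T^{\star}}(X)$ for every rigid $X$. Here I would prove a sign-coherence lemma: in a minimal $T$-resolution of a rigid object $X$, the summand $E_m$ occurs only in terms $T_i$ with $i$ of a single parity. Granting this, the coefficient $\sum_i(-1)^i\mult{E_m}{T_i}$ is nonnegative precisely when all occurrences lie in even degree and nonpositive precisely when all lie in odd degree, so its sign records which exchange angle must be used; one then converts the minimal $T$-resolution into a $T^{\star}$-resolution by repeatedly splicing in the appropriate exchange angle (via the $(d+2)$-angulated analogue of the octahedral axiom) and reads off the index, which is exactly the claimed linear substitution applied to $\index_T(X)$. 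The main obstacle is this sign-coherence lemma: a parity-mixed occurrence of $E_m$ would force both exchange angles to be spliced in simultaneously, and since the two replacement vectors differ, the substitution would fail. Ruling this out is what must draw on the rigidity of $X$ together with the no-loop hypothesis; I anticipate proving it by showing that occurrences of $E_m$ in both an even and an odd term of the minimal resolution produce a nonzero self-extension of $X$, with the no-loop condition preventing this extension from being absorbed.
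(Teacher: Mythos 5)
Your overall architecture is close to the paper's: exchange angles from approximation theory, identification of the two replacement vectors as \( \index_{T^{\star}}(E_m) \) computed from the two (rotated) exchange \( (d+2) \)-angles, a structural lemma restricting where \( E_m \) can appear in a radical \( T \)-resolution of a rigid object, and then splicing in exchange angles to convert a \( T \)-resolution into a \( T^{\star} \)-resolution. Your computation of the replacement vectors, and the observation that odd \( d \) makes the sign of \( (-1)^d[E_m^{\star}] \) come out right, are correct. The genuine gap is in your sign-coherence lemma, in two respects. First, ``\( E_m \) occurs only in terms of a single parity'' is too weak to run your splicing step. In a \( (d+2) \)-angulated category a resolution is a single \( (d+2) \)-angle of fixed length, so an exchange angle can only be spliced in when its copy of \( E_m \) aligns with an \emph{end} term of the resolution (this is how the paper does it: the right exchange angle against \( T_0 \), the left one against \( T_d \)); aligning the right exchange angle with a copy of \( E_m \) inside, say, \( T_2 \) would push its remaining terms past the end of the resolution, or force a rotation introducing suspended objects not in \( \add T \). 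Your lemma permits same-parity occurrences such as \( E_m \mid T_0 \) and \( E_m \mid T_2 \) simultaneously, and your construction then has no way to proceed. What is actually needed, and what the paper proves (Theorem~\ref{thm.find_replacement}(5)), is the stronger statement that \( E_m \) never occurs in the middle terms \( T_1, \ldots, T_{d-1} \) of a radical \( T \)-resolution at all.

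Second, your proposed mechanism for proving the lemma --- mixed-parity occurrences of \( E_m \) produce a non-zero self-extension of \( X \), blocked by the no-loop condition --- does not match what makes the needed statements true, and I do not see how to make it work. The exclusion of \( E_m \) from the middle terms has nothing to do with rigidity of \( X \): it holds for \emph{arbitrary} objects \( K \), and the paper proves it from the no-loop hypothesis by a homotopy/factorization argument (a middle occurrence would force the connecting map \( E_m \to \sus E_m^{\star} \) to vanish, making the approximation \( \overline{T}_1 \to E_m \) a split epimorphism, a contradiction). Rigidity of \( X \) enters only in a second, independent step (Lemma~\ref{lemma.rigid_only_one_end}): it rules out \( E_m \) occurring in both end terms \( T_0 \) and \( T_d \), via a weak-kernel/weak-cokernel argument showing that a common summand of \( T_0 \) and \( T_d \) would give a non-radical map factoring through radical ones. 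Your plan compresses these two different mechanisms into one rigidity-based claim, and the part of the claim you would actually need most (same-parity middle occurrences) is precisely the part rigidity cannot see. A smaller issue of the same flavour occurs in your first bullet: building the exchange angle by iterated right \( \overline{T} \)-approximations from \( E_m \) is the strategy the paper can only justify in the cluster category of a triangular \( d \)-representation finite algebra (Theorem~\ref{thm.mutability_for_clustercat}); in a general \( 2 \)-Calabi-Yau \( (d+2) \)-angulated category the paper instead resolves \( E_m^{\star} \) by \( T \) and shows the end term is a single copy of \( E_m \) and the middle terms avoid \( E_m \), using rigidity of \( \overline{T} \oplus E_m^{\star} \), basicness, and Nakayama --- not the no-loop hypothesis, which you invoke for this purpose.
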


In Section~\ref{coeff} we explain
the usual meaning of tropical coefficient dynamics, and the result of Dehy and
Keller \cite{DK} showing that, in a 2-Calabi-Yau triangulated category, the
index satisfies tropical coefficient dynamics.

In Section~\ref{sect.d+2-ang} we recall the definition of \( (d+2) \)-angulated categories, and the fact that we can construct a \( 2 \)-Calabi-Yau \( (d+2) \)-angulated cluster category for any \( d \)-representation finite algebra.

In Section~\ref{sect.mutation} we investigate the notion of mutation for cluster tilting objects in \( 2 \)-Calabi-Yau \( (d+2) \)-angulated categories. In particular we give a criterion for the mutability of indecomposable summands of cluster tilting objects --- this is a new phenomenon in higher dimension, as all summands are mutable in dimension \( 1 \).

In Section~\ref{sect.index} we prove our main theorem, giving an explicit formula for the dynamics of the index under mutation.

In Section~\ref{laminations}, we return to the setting of
  \cite{OT} apply the index to define a notion of higher shear
  coordinates for laminations in a cyclic polytope. We end with an example
of the mutation of indices worked out in this setting in Section \ref{ex}.

\subsection*{Acknowledgements}
The authors thank the Centre for Advanced Studies at the Norwegian Academy of
Science, NTNU, UNB, UQAM, and the Mathematisches Forschungsinstitut Oberwolfach for their
kind hospitality at various stages of the writing of this paper.
H.T. was partially supported by the Canada Research Chairs program and an NSERC Discovery Grant.
  
\section{Coefficient dynamics and the index}\label{coeff}
This section is motivational.  The reader who does not find
the prospect of a link to cluster algebraic considerations to be motivational
may safely skip it. 

\subsection{Coefficient dynamics and tropical coefficient dynamics} \label{2-1}

  We follow the presentation in \cite{CA4}, since it is convenient for our
  purposes.
Coefficient dynamics are defined in a semifield $(\mathscr F,\cdot,+)$.
By definition, $(\mathscr F,\cdot)$ is an abelian group, and $+$ is a
commutative, associative operation such that multiplication distributes
over addition. To orient the reader, one example of a semifield is the
positive real numbers equipped with the usual operations of multiplication
and addition.

A $Y$-seed is an $n\times n$ skew-symmetric matrix $B$ together with an
$n$-tuple $y = (y_1,\dots,y_n)$ of elements of $\mathscr F$.

For $1\leq m \leq n$, we define a mutation operation $\mu_m$ on $Y$-seeds.
$\mu_m(B)$ is the result of the usual mutation operation on $n\times n$ skew-symmetric matrices; we do not recall it here.  The operation on the coefficients
is given as follows:
$$ \mu^B_m(y)_i = \left\{ \begin{array}{ll}  y_m^{-1} & \textrm {if $i=m$,}\\
  y_i  (1+y_m^{-\sign(b_{mi})})^{-b_{mi}}
  & \textrm {otherwise.}\end{array}\right.$$
Here $\sign(x)$ is $1$, $0$, or $-1$ as $x$ is positive, zero, or negative. We include \( B \) in the notation for \( \mu^B_m(y) \) to record the dependence of this operation on the matrix \( B \).

We obtain tropical coefficient dynamics by specializing to a tropical
semifield. Let $\mathbb P= \mathbb Z^r$. Coordinatewise addition defines an
  abelian group structure on $\mathbb P$. Define an operation $\oplus$ on
  $\mathbb P$ by $$(a_1,\dots,a_r)\oplus (b_1,\dots,b_r)=(\min(a_1,b_1),\dots,\min(a_r,b_r))$$
  Then $(\mathbb P,+,\oplus)$ defines a semifield structure on $\mathbb P$.
  Such semifields are known as \emph{tropical semifields}. (Note that the
  operation $+$ on $\mathbb P$ plays the role of multiplication in the
  definition of a semifield, while $\oplus$ plays the role of addition.)
Specializing the coefficient dynamics to a tropical semifield,
we obtain tropical coefficient dynamics.

For a tuple $v = (v_1,\dots,v_n) \in \mathbb P^n$, the mutation $\mu^B_m$ is by definition
\[ \mu^B_m(v)_i = \left\{ \begin{array}{ll}  -v_m & \textrm {if $i=m$}\\
     v_i + [0 \oplus (- \sign(b_{mi}) v_m)] (-b_{mi})  & \textrm {otherwise,}
     \end{array}\right. \]
Note that for a vector \( x \), the term \( [ 0 \oplus x ] \) just describes the vector which one obtains from \( x \) by replacing all positive entries by zero. If we write $[x]_- = - [0 \oplus x]$, and similarly $[x]_+ = - [0 \oplus (-x)]$, then the above simplifies to
\begin{equation} \label{trop}
\mu^B_m(v)_i =\left\{ \begin{array}{ll}  -v_m & \textrm {if $i=m$}\\
     v_i+ [b_{mi}]_+[v_m]_+ - [b_{mi}]_-[v_m]_- & \textrm {otherwise.}
     \end{array}\right. \end{equation}
(Here, we use the notion $[\ ]_+,[\ ]_-$ both on vectors in $\mathbb Z^r$ and on scalars in \( \mathbb Z \).) 

\subsection{Indices and tropical coefficient dynamics}\label{2-2}
  Let $\mathscr C$ be a 2-Calabi-Yau
  category, and let $X$ be a rigid object in the category.  For $T$ any
  cluster tilting object in $\mathscr C$, 
  there is a
  triangle
  $$ T_1 \rightarrow T_0 \rightarrow X \rightarrow \Sigma T_1$$
  with $T_0, T_1\in \add T$.  
  We define $\ind_T(X)$, the index of $X$ with respect to $T$,
  to be $[T_0]-[T_1]$ in
  $\Gr(\add T)$.

  Let $T$ and $T'$ be cluster tilting objects related by a mutation. We will explain in this section the relationship between
    $\ind_T(X)$ and $\ind_{T'}(X)$, and then make the connection to tropical coefficient mutation.
      
      Let $T = E_1 \oplus \cdots \oplus E_n$ be a cluster tilting object in $\mathscr C$.
For \( E_m \in \{ E_1, \ldots, E_{n} \} \) there is a unique other indecomposable \( E_m^{\star} \) such that replacing \( E_m \) with \( E_m^{\star} \) gives a new cluster tilting object \( T^{\star} \). This object is given by the following two exchange triangles:
\begin{eqnarray*} 
E_m \rightarrow \overline{T}^{\text{left}} \rightarrow E_m^*\rightarrow \Sigma E_m\\ 
E_m^* \rightarrow \overline{T}^{\text{right}} \rightarrow E_m\rightarrow \Sigma E_m'
\end{eqnarray*}
with \( \overline{T}^{\text{left}} \) and \( \overline{T}^{\text{right}} \) in \( \add \bigoplus_{j \neq m} E_j \).

The following result explains how the index changes under mutation of cluster tilting objects.

  \begin
{theorem}[{\cite[Theorem 3]{DK}}] \label{DK-thm}
Let $T$ and $T^*$ be cluster 
tilting objects in $\mathscr C$
differing by a single mutation at $E_m$, as above.  
Let $X$ be a rigid object in $\mathscr C$.  
Then $\index_{T^*}(X)$ is obtained from \( \index_T(X) \) by substituting for $[E_m]$:

\begin{align*} - [E_m^*]  + [\overline{T}^{\textup{right}}] && \textrm{ if the coefficient of $[E_m]$ in $\index_T(X)$ is positive}\\
  - [E_m^*]  + [\overline{T}^{\textup{left}}] && \textrm{ if the coefficient of $[E_m]$ in $\index_T(X)$ is negative}
\end{align*}
\end{theorem}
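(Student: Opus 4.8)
The plan is to compute \(\index_{T^*}(X)\) by starting from an \(\add T\)-resolution of \(X\) and surgically rebuilding it into an \(\add T^*\)-resolution, splicing in the two exchange triangles. Because the index does not depend on the chosen resolution (a fact that uses only the rigidity of \(T\), as recalled in Section~\ref{2-2}), any \(\add T^*\)-resolution I succeed in constructing will compute \(\index_{T^*}(X)\). Write \(\index_T(X)=\sum_i a_i[E_i]\), so that the sign of the coefficient \(a_m\) is what distinguishes the two cases.

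The single nontrivial input, and the only place the hypothesis that \(X\) is rigid enters, is the following. \emph{Key Lemma:} in the minimal \(\add T\)-resolution \(T_1\to T_0\to X\to\Sigma T_1\) of a rigid object, \(T_0\) and \(T_1\) have no common indecomposable summand. I expect this to be the main obstacle. Granting it, \(E_m\) occurs in exactly one of the two terms: in \(T_0\) with multiplicity \(a_m\) if \(a_m\geq 0\), and in \(T_1\) with multiplicity \(-a_m\) if \(a_m\leq 0\). To prove the Key Lemma I would feed the resolution triangle into \(\Hom(X,-)\) and \(\Hom(-,X)\) and combine \(\Hom(X,\Sigma X)=0\) with the \(2\)-Calabi--Yau duality \(\Hom(T_i,\Sigma X)\cong\D\Hom(X,\Sigma T_i)\); minimality guarantees that each indecomposable summand of \(T_1\) has a nonzero component in the connecting map \(X\to\Sigma T_1\) and that each summand of \(T_0\) has nonzero image under \(T_0\to X\), and a shared summand is then incompatible with the vanishing of \(\Hom(X,\Sigma X)\). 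This is essentially the analysis underlying Dehy--Keller's injectivity of the index on rigid objects.

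With the Key Lemma available the remainder is formal. Suppose first \(a_m\geq 0\), so that \(E_m\notin\add T_1\) and \(T_0=E_m^{a_m}\oplus A_0\) with \(A_0\in\add(T/E_m)\). I splice in the exchange triangle \(E_m^*\to\overline{T}^{\textup{right}}\to E_m\to\Sigma E_m^*\) by forming the octahedron on the composite \((\overline{T}^{\textup{right}})^{a_m}\oplus A_0\to T_0\to X\). The cone of the first map is \(\Sigma(E_m^*)^{a_m}\), so the octahedron yields a triangle whose third vertex \(W\) sits in a triangle exhibiting \(\Sigma^{-1}W\) as an extension of \(T_1\) by \((E_m^*)^{a_m}\); its connecting morphism lies in \(\Hom(T_1,\Sigma E_m^*)\), which vanishes since \(T^*\) is rigid, so the extension splits and \(\Sigma^{-1}W\cong (E_m^*)^{a_m}\oplus T_1\in\add T^*\). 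Rotating produces the \(\add T^*\)-resolution \((E_m^*)^{a_m}\oplus T_1\to(\overline{T}^{\textup{right}})^{a_m}\oplus A_0\to X\to\Sigma(\cdots)\), so that \(\index_{T^*}(X)=a_m[\overline{T}^{\textup{right}}]+[A_0]-[T_1]-a_m[E_m^*]\). Comparing with \(\index_T(X)=a_m[E_m]+[A_0]-[T_1]\), the generators \([E_j]\) with \(j\neq m\) are unchanged while \([E_m]\) has been replaced by \(-[E_m^*]+[\overline{T}^{\textup{right}}]\), which is the asserted rule in the positive case.

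The case \(a_m\leq 0\) is the exact mirror image: now \(E_m\) appears only in \(T_1\), and I splice in the other exchange triangle \(E_m\to\overline{T}^{\textup{left}}\to E_m^*\to\Sigma E_m\) on the co-resolution side, running the dual octahedron. Once again the resulting error triangle splits by rigidity of \(T^*\), and reading off the \(\add T^*\)-resolution replaces \([E_m]\) by \(-[E_m^*]+[\overline{T}^{\textup{left}}]\). The two prescriptions agree on the overlap \(a_m=0\), where \(E_m\) lies on neither side and both leave \(\index_T(X)\) unchanged, so the piecewise rule is well defined. Apart from the Key Lemma, every step is a formal octahedral manipulation whose only non-formal ingredient is the rigidity of \(T\) and of \(T^*\), used to split the error terms back into \(\add T^*\).
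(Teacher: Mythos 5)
Your overall strategy coincides with the paper's: the paper does not reprove Theorem~\ref{DK-thm} itself (it cites \cite{DK}), but its proof of the higher-dimensional generalization in Section~\ref{sect.index} has exactly your architecture --- your Key Lemma is the \(d=1\) case of Lemma~\ref{lemma.rigid_only_one_end}, and your octahedral splicing of the two exchange triangles is the \(d=1\) case of the cone construction in the proof of Theorem~\ref{thm.find_replacement}, which is what yields Proposition~\ref{prop.g_mut_I} and Corollaries~\ref{cor.main_thm_alld} and~\ref{cor.main_thm}. The splicing half of your argument is correct and complete: the cone of \((\overline{T}^{\textup{right}})^{a_m}\oplus A_0\to T_0\) is indeed \(\Sigma(E_m^*)^{a_m}\), the connecting map of the octahedron lies in \(\Hom(T_1,\Sigma E_m^*)=0\) because \(T_1\in\add(T/E_m)\) and \(T^*\) is rigid, and the index bookkeeping (together with its mirror image for \(a_m\le 0\)) checks out.

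The gap is in your justification of the Key Lemma. From minimality you correctly extract that a common indecomposable summand \(E\) of \(T_0\) and \(T_1\) gives a nonzero component \(\delta_E\colon X\to\Sigma E\) of the connecting map and a nonzero restriction \(\beta_E\colon E\to X\) of \(T_0\to X\); but these two facts do not by themselves contradict \(\Hom(X,\Sigma X)=0\). The only element of \(\Hom(X,\Sigma X)\) they produce is the composite \(\Sigma\beta_E\circ\delta_E\), and the conclusion is merely that this composite vanishes --- a composite of nonzero maps may perfectly well be zero, so there is no contradiction yet, and invoking the Serre duality isomorphism \(\Hom(T_i,\Sigma X)\iso\D\Hom(X,\Sigma T_i)\) does not force this particular pairing to be nonzero. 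The paper closes exactly this point by a different, factorization-style argument in the proof of Lemma~\ref{lemma.rigid_only_one_end}; specialized to \(d=1\) it reads: a common summand yields a non-radical map \(f\colon T_1\to T_0\); rigidity of \(X\) forces the composite \(\Sigma^{-1}X\to T_1\xrightarrow{f} T_0\to X\) to vanish; the weak cokernel property of \(\alpha\colon T_1\to T_0\) then gives \(\beta f=h\alpha\) for some \(h\colon T_0\to X\); rigidity of \(T\) (i.e.\ \(\Hom(T_0,\Sigma T_1)=0\)) lets \(h\) factor as \(h=\beta p\); and the weak kernel property gives \(f=p\alpha+\alpha q\), exhibiting \(f\) as radical --- a contradiction. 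Your proof becomes complete once your Key Lemma paragraph is replaced by this argument (or by Dehy--Keller's own proof of the corresponding statement); everything downstream of it is fine as written.
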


  We now relate the previous theorem to tropical coefficient dynamics. 

First we turn the indices into integer vectors by setting
$$\vind_T(X)= ([\index_T(X):E_1],\dots,[\index_T(X):E_n]),$$
and similarly for $\vind_{T^*}$. (Note that the definition of
$\vind_T(X)$ and $\vind_{T^*}(X)$ depend
implicitly on the ordering of the summands of $T$ and $T^*$ which we have
already fixed.) With this convention the mutation rule of Theorem~\ref{DK-thm} turns into
\[ \vind_{T^*}(X) = \mu^T_m ( \vind_T(X) ), \] with
\begin{equation}\label{right-left-trop}
\mu^T_m(v)_i = \begin{cases} -v_m & \text{if } i = m \\ v_i + \mult{E_i}{\overline{T}^{\text{right}}} v_m & \text{if } i \neq m \text{ and } v_m \geq 0 \\ v_i + \mult{E_i}{\overline{T}^{\text{left}}} v_m & \text{if } i \neq m \text{ and } v_m \leq 0. \end{cases}
\end{equation}

Let us now assume that there are no loops or 2-cycles adjacent to \( E_m \) in the quiver of \( T \).
 Let \( B \) be the skew-symmetric matrix whose entry $b_{jk}$ is the number of arrows from $k$ to $j$, minus the number from $j$ to $k$.

 Under these hypotheses, we have
 \begin{equation*}\label{right-left}
 \overline T^{\text{right}}
 \simeq \bigoplus_{j=1}^n E_j^{\oplus [b_{mj}]_+}\qquad
 \overline T^{\text{left}} \simeq \bigoplus_{j=1}^n E_j^{\oplus [b_{mj}]_-},
 \end{equation*}
 and then \( \mu^T_m \) of Equation~\eqref{right-left-trop} and \( \mu^B_m \) of Equation~\eqref{trop} coincide.

\section{$(d+2)$-angulated cluster categories} \label{sect.d+2-ang}

Throughout, let $k$ be a field. All categories appearing will be $k$-categories with finite dimensional morphism spaces and splitting idempotents. In particular they have the Krull-Schmidt property. We denote by $\D = \Hom_{k}(-, k)$ the standard duality of $k$-vector spaces.

Recall that a triangulated category is an additive category, equipped with an automorphism $\Sigma$ (called \emph{suspension}), and a class of $3$-morphism sequences of the form
\[ X \to Y \to Z \to \Sigma X. \]
The sequences in this chosen class are referred to as \emph{triangles}. They are required to satisfy certain axioms. Vaguely, the triangles behave like short exact sequences, but provide a more symmetric language.

A $(d+2)$-angulated category is defined similarly to a triangulated category, just that its special sequences -- the $(d+2)$-angles -- mimic the behaviour of longer exact sequences.

\begin{definition}[{\cite{GKO}}]
A \emph{$(d+2)$-angulated category} is an additive category $\mathscr{D}$ with an automorphism $\Sigma$, and a class of $(d+2)$-morphism sequences of the form
\[ \begin{tikzcd}[sep=small,cramped] D_0 \ar[r,"f_0"] & \cdots \ar[r,"f_d"] & D_{d+1} \ar[r,"f_{d+1}"] & \Sigma D_0 \end{tikzcd}. \]
subject to the following four axioms. The sequences in this chosen class are referred to as \emph{(distinguished) $(d+2)$-angles}.
\begin{itemize}
\item[(D1)] 
\begin{itemize} 
\item Sums and summands of distinguished $(d+2)$-angles are distinguished $(d+2)$-angles again.
\item For any object $D \in \mathscr{D}$ the trivial $(d+2)$-angle
\[ \begin{tikzcd}[sep=small,cramped] D \ar[r,"\rm id"] & D \ar[r] & 0 \ar[r] & \cdots \ar[r] & 0 \ar[r] & \Sigma D \end{tikzcd} \]
is a distinguished $(d+2)$-angle.
\item Any morphism is the first morphism in some distinguished $(d+2)$-angle.
\end{itemize}
\item[(D2)] The class of distinguished $(d+2)$-angles is closed under rotation. That is, for a $(d+2)$-angle as above also
\[ \begin{tikzcd}[sep=scriptsize,cramped] D_1 \ar[r,"f_1"] & \cdots \ar[r,"f_d"] & D_{d+1} \ar[r,"f_{d+1}"] & \Sigma D_0 \ar[r,"(-1)^d \Sigma f_0"] &[6mm] \Sigma D_1 \end{tikzcd} \]
and
\[ \begin{tikzcd}[sep=scriptsize,cramped] \Sigma^{-1} D_{d+1} \ar[r,"(-1)^d \Sigma^{-1} f_{d+1}"] &[11mm] D_0 \ar[r,"f_0"] & \cdots \ar[r,"f_d"] & D_{d+1} \end{tikzcd} \]
are distinguished $(d+2)$-angles.
\item[(D3)]
Given the solid part of the following commutative diagram, where the rows are $(d+2)$-angles
\[ \begin{tikzcd}
D_0 \ar[r,"f_0"] \ar[d,"\varphi_0"] & D_1 \ar[r,"f_1"] \ar[d,"\varphi_1"] & D_2 \ar[r,"f_2"] \ar[d,dashed,"\varphi_2"] & \cdots \ar[r,"f_d"] & D_{d+1} \ar[r,"f_{d+1}"] \ar[d,dashed,"\varphi_{d+1}"] & \sus D_0 \ar[d,"\sus \varphi_0"] \\
E_0 \ar[r,"g_0"] & E_1 \ar[r,"g_1"] & E_2 \ar[r,"g_2"] & \cdots \ar[r,"g_d"] & E_{d+1} \ar[r,"g_{d+1}"] & \sus E_0
\end{tikzcd} \]
it is possible to find the dashed morphisms \( \varphi_2 \) to \( \varphi_{d+1} \) such that the entire diagram commutes.
\item[(D4)]
In (D3) it is possible to choose the dashed morphisms in such a way that also the cone of the vertical morphism 
\[ \begin{tikzcd}[sep=large,cramped,ampersand replacement=\&]
E_0 \oplus D_1 \ar[r,"{\left( \begin{smallmatrix} g_0 & \varphi_1 \\ 0 & - f_1 \end{smallmatrix} \right)}"] \& E_1 \oplus D_2 \ar[r,"{\left( \begin{smallmatrix} g_1 & \varphi_2 \\ 0 & - f_2 \end{smallmatrix} \right)}"] \& \cdots \ar[r,"{\left( \begin{smallmatrix} g_d & \varphi_{d+1} \\ 0 & - f_{d+1} \end{smallmatrix} \right)}"] \& E_{d+1} \oplus \Sigma D_0 \ar[r,"{\left( \begin{smallmatrix} g_{d+1} & \Sigma \varphi_0 \\ 0 & - \Sigma f_0 \end{smallmatrix} \right)}"] \& \Sigma E_0 \oplus \Sigma D_1
\end{tikzcd} \]
is a distinguished $(d+2)$-angle. (Such a choice of the dashed arrows is sometimes called \emph{good}).
\end{itemize}
\end{definition}

The following elementary properties of \( (d+1) \)-angulated categories are immediate generalizations of the same properties for triangulated categories, with the exact same proof.

\begin{lemma}
Any morphism in a distinguished \( (d+2) \)-angle is a weak kernel of the following morphism, and a weak cokernel of the previous morphism.

Any monomorphism or epimorphism in a \( (d+2) \)-angulated category splits. In particular any \( (d+2) \)-angle in which at least one term is \( 0 \) is contractible.
\end{lemma}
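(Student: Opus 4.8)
The strategy is to transcribe the classical triangulated-category arguments, replacing (TR1)--(TR3) by (D1)--(D3) and using the rotation axiom (D2) to move any morphism into a standard position, so that the extra intermediate objects of a $(d+2)$-angle never interfere.

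First I would record that consecutive morphisms compose to zero, $f_i f_{i-1} = 0$. By (D2) it is enough to check $f_1 f_0 = 0$ for an angle $D_0 \xrightarrow{f_0} D_1 \xrightarrow{f_1} D_2 \to \cdots$. Mapping the trivial $(d+2)$-angle $D_0 \xrightarrow{\mathrm{id}} D_0 \to 0 \to \cdots \to \Sigma D_0$ (distinguished by (D1)) into the given one via the verticals $\mathrm{id}_{D_0}$ and $f_0$, the first square commutes tautologically, and (D3) produces a commuting diagram whose second square reads $f_1 f_0 = \varphi_2 \circ 0 = 0$.

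Next, the weak kernel property. By (D2) I reduce to showing that $f_0$ is a weak kernel of $f_1$: given $\beta\colon T \to D_1$ with $f_1\beta = 0$, I must produce $h\colon T \to D_0$ with $f_0 h = \beta$. Rotate both the given angle and the trivial angle on $T$ once forward, so that $\Sigma D_0$ sits in the last position of the rotated source angle and $\Sigma f_0$ is its final morphism. Build a morphism of $(d+2)$-angles from the rotated trivial angle on $T$ with degree-$0$ component $\beta$; the identity $f_1\beta = 0$ is exactly the commutativity of its first square, so (D3) applies and fills in the remaining verticals. The component at the last position has the form $\Sigma h$ for a unique $h\colon T \to D_0$, and commutativity of the final square gives $(-1)^d \Sigma f_0 \circ \Sigma h = \Sigma\beta \circ (-1)^d\mathrm{id}$; the signs cancel and desuspension yields $f_0 h = \beta$. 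The weak cokernel statement is dual --- map the given angle into a trivial angle, or pass to the opposite $(d+2)$-angulated category.

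Finally, splitting and contractibility. Let $f\colon X \to Y$ be a monomorphism; embed it by (D1) as the first morphism $f_0$ of a $(d+2)$-angle, with connecting morphism $f_{d+1}\colon D_{d+1}\to \Sigma X$. The composition-zero step (after one backward rotation) gives $f_0\circ\Sigma^{-1}f_{d+1}=0$, and monicity of $f_0$ forces $\Sigma^{-1}f_{d+1}=0$, hence $f_{d+1}=0$. Rotating so that this zero morphism becomes the first morphism, $f_0$ becomes a weak cokernel of a zero map, so every morphism out of $X$ --- in particular $\mathrm{id}_X$ --- factors through $f_0$; the resulting factorization $\mathrm{id}_X = t f_0$ exhibits $f$ as a split monomorphism, and the epimorphism case is dual. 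For the last claim, if some term $D_j$ is $0$, then $f_j\colon 0 \to D_{j+1}$ is a weak kernel of $f_{j+1}$, which forces $f_{j+1}$ to be a monomorphism, and dually $f_{j-2}$ to be an epimorphism; by the splitting just proved these maps are split, and peeling off the resulting summands (using closure under summands in (D1)) reduces the angle to a sum of trivial angles, i.e.\ shows it is contractible. The only real obstacle is bookkeeping: I must choose the rotations in (D2) so that the fill-in guaranteed by (D3) lands precisely at the suspended position, and then check that the two factors of $(-1)^d$ in the final square cancel, so that the factorization is $f_0 h = \beta$ on the nose rather than up to sign. Once the indices and signs are aligned, every step is formally identical to the triangulated case.
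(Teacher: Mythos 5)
Your proof is correct, and it follows exactly the route the paper intends: the paper gives no argument at all, stating only that these properties carry over from triangulated categories ``with the exact same proof,'' and your write-up is precisely that classical argument transcribed to $(d+2)$-angles, with the rotations, fill-ins via (D3), and the $(-1)^d$ signs handled correctly.
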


While we did \emph{not} require uniqueness for the \( (d+2) \)-angle starting with a given morphism in (D2), we do have the following weak uniqueness.

\begin{lemma} \label{lem.d-cone_unique}
Let \( f_0 \colon D_0 \to D_1 \) be a morphism in a \( (d+2) \)-angulated category \( \mathcal{D} \). If we have two completions to \( (d+2) \)-angles
\[ \begin{tikzcd}[sep=small,cramped] D_0 \ar[r,"f_0"] & D_1 \ar[r,"f_1^i"] & D_2^i \ar[r,"f_2^i"] & \cdots \ar[r,"f_d^i"] & D_{d+1}^i \ar[r,"f_{d+1}^i"] & \Sigma D_0 \end{tikzcd} \quad i \in \{1,2\}, \]
such that all \( f_j^i \) with \( 2 \leq j \leq d \) are radical morphisms, then the two \( (d+2) \)-angles are isomorphic.
\end{lemma}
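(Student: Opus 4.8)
The plan is to use axiom (D3) to compare the two angles and then to prove the comparison is an isomorphism by a homotopy argument that feeds on the radical hypothesis. First I would apply (D3) to the commutative square built from $\mathrm{id}_{D_0}$ and $\mathrm{id}_{D_1}$ (which commutes with the common map $f_0$) to obtain a morphism of $(d+2)$-angles $\varphi_\bullet \colon (D^1_\bullet) \to (D^2_\bullet)$ with $\varphi_0 = \mathrm{id}_{D_0}$ and $\varphi_1 = \mathrm{id}_{D_1}$, and symmetrically a morphism $\psi_\bullet$ in the other direction with $\psi_0 = \psi_1 = \mathrm{id}$. It then suffices to prove the following: any endomorphism $\theta_\bullet$ of a $(d+2)$-angle whose intermediate maps $f_2,\dots,f_d$ are radical, and which satisfies $\theta_0 = 0$ and $\theta_1 = 0$, has all of its components $\theta_2,\dots,\theta_{d+1}$ radical. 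Applying this to $\theta_\bullet = \psi_\bullet\varphi_\bullet - \mathrm{id}$ and to $\varphi_\bullet\psi_\bullet - \mathrm{id}$ (the radical hypothesis holds for both angles) shows that each $\psi_j\varphi_j$ and each $\varphi_j\psi_j$ is of the form $\mathrm{id} + (\text{radical})$, hence invertible in the semiperfect endomorphism ring $\End(D_j)$, so each $\varphi_j$ is an isomorphism.

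To produce the components of such a $\theta_\bullet$ I would run the usual homotopy construction, using that each $f_j$ is a weak cokernel of $f_{j-1}$ and a weak kernel of $f_{j+1}$ (the preceding lemma), together with $f_{j+1}f_j = 0$. Working from the left, the relation $\theta_2 f_1 = f_1\theta_1 = 0$ lets me factor $\theta_2 = s_2 f_2$ through the weak cokernel $f_2$ of $f_1$; inductively, $\theta_{j+1} f_j = f_j\theta_j$ combined with $f_j f_{j-1} = 0$ gives $(\theta_{j+1} - f_j s_j)f_j = 0$, whence $\theta_{j+1} = f_j s_j + s_{j+1} f_{j+1}$ for a suitable $s_{j+1}$. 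Symmetrically, from the right the relation $f_{d+1}\theta_{d+1} = (\Sigma\theta_0) f_{d+1} = 0$ lets me factor $\theta_{d+1} = f_d t_d$ through the weak kernel $f_d$ of $f_{d+1}$, and descending gives $\theta_j = t_j f_j + f_{j-1} t_{j-1}$. The radical hypothesis then finishes the two expansions: the left-hand form $\theta_j = f_{j-1}s_{j-1} + s_j f_j$ is radical for $2 \le j \le d$ (with $f_1 s_1 = 0$ at the base, so $\theta_2 = s_2 f_2$ needs no assumption on $f_1$), while the right-hand form $\theta_j = t_j f_j + f_{j-1}t_{j-1}$ is radical for $3 \le j \le d+1$ (with $\theta_{d+1} = f_d t_d$ needing no assumption on $f_{d+1}$). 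Combining the two expansions covers every index $2 \le j \le d+1$, so all $\theta_j$ are radical.

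The hard part is exactly this treatment of the two end terms $\theta_2$ and $\theta_{d+1}$. A one-sided induction alone would force $\theta_2$ (respectively $\theta_{d+1}$) to absorb a copy of $f_1$ (respectively $f_{d+1}$), and these two maps are not among those assumed radical; it is precisely the boundary vanishing $\theta_0 = 0$ and $\Sigma\theta_0 = 0$ that supplies the extra factorizations $\theta_2 f_1 = 0$ and $f_{d+1}\theta_{d+1} = 0$, allowing each end term to be written without the offending map. Thus running the induction from both ends is essential, and this is where the radical hypothesis is genuinely used — without it the conclusion is false, since one may insert a contractible summand $D \xrightarrow{\mathrm{id}} D$ into the middle of any completion. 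Finally, for $d = 1$ the radical range is empty and the statement reduces to the classical uniqueness of the cone up to isomorphism, which follows from the five-lemma applied to the long exact sequences obtained from $\Hom(-,W)$ together with the Yoneda lemma.
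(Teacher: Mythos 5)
Your proof is correct, but it takes a genuinely different route from the paper's. The paper applies (D4) directly to the identity comparison $(\operatorname{id}_{D_0},\operatorname{id}_{D_1})$ between the two completions, forming a good morphism and its mapping cone; elementary automorphisms then split this cone into trivial $(d+2)$-angles plus the complex $0 \to D_2^1 \to D_2^2 \oplus D_3^1 \to \cdots \to D_{d+1}^2 \to 0$, which by (D1) is a $(d+2)$-angle containing a zero term, hence contractible, and the contracting homotopy combined with the radical hypothesis forces the off-diagonal components $a_i$ to be isomorphisms. You instead use only (D3) --- twice, to get comparison morphisms $\varphi$ and $\psi$ in both directions --- together with the weak kernel/cokernel properties and the invertibility of $\operatorname{id} + (\text{radical})$ in the endomorphism rings of a Krull--Schmidt category; your two-sided homotopy expansion of $\theta = \psi\varphi - \operatorname{id}$ (from the left for $2 \le j \le d$, from the right for $j = d+1$) is exactly what is needed to avoid the two maps $f_1$ and $f_{d+1}$ that are not assumed radical, and your identification of the end terms as the real content is accurate. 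What your approach buys: it never invokes (D4), only the weaker lifting axiom (D3), plus the Krull--Schmidt property (which the paper assumes throughout anyway, and which its own proof also uses when extracting invertibility from the homotopy identity). What the paper's approach buys: it is uniform in $d$, including $d = 1$, whereas you must fall back on the classical five-lemma argument for $d = 1$ since your radical range is empty there. One trivial slip in your commentary, not in the proof itself: the factorization $\theta_2 f_1 = 0$ comes from $\theta_1 = 0$ (as your derivation correctly uses), not from $\theta_0 = 0$.
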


\begin{proof}
By (D4) we can find morphisms \( a_2, \ldots, a_{d+1} \) such that
\[ \begin{tikzcd}[sep=large,cramped,ampersand replacement=\&]
D_0 \oplus D_1 \ar[r,"{\left( \begin{smallmatrix} f_0 & \operatorname{id} \\ 0 & - f_1^1 \end{smallmatrix} \right)}"] \& D_1 \oplus D_2^1 \ar[r,"{\left( \begin{smallmatrix} f_1^2 & a_2 \\ 0 & - f_2^1 \end{smallmatrix} \right)}"] \& \cdots \ar[r,"{\left( \begin{smallmatrix} f_d^2 & a_{d+1} \\ 0 & - f_{d+1}^1 \end{smallmatrix} \right)}"] \& D_{d+1}^2 \oplus \Sigma D_0 \ar[r,"{\left( \begin{smallmatrix} f_{d+1}^2 & \operatorname{id} \\ 0 & - \Sigma f_0^1 \end{smallmatrix} \right)}"] \& \Sigma D_0 \oplus \Sigma D_1
\end{tikzcd} \]
is a $(d+2)$-angle.
By the automorphisms \( \left( \begin{smallmatrix} \operatorname{id} & 0 \\ f_0 & \operatorname{id} \end{smallmatrix} \right) \), \( \left( \begin{smallmatrix} \operatorname{id} & 0 \\ f_1^1 & \operatorname{id} \end{smallmatrix} \right) \), and \( \left( \begin{smallmatrix} \operatorname{id} & 0 \\ f_{d+1}^2 & \operatorname{id} \end{smallmatrix} \right) \) of the first, second, and \( (d+2) \)-nd term, respectively, this \( (d+2) \)-angle is isomorphic to
\[ \begin{tikzcd}[sep=large,cramped,ampersand replacement=\&]
D_0 \oplus D_1 \ar[r,"{\left( \begin{smallmatrix} 0 & \operatorname{id} \\ 0 & 0 \end{smallmatrix} \right)}"] \& D_1 \oplus D_2^1 \ar[r,"{\left( \begin{smallmatrix} 0 & a_2 \\ 0 & - f_2^1 \end{smallmatrix} \right)}"] \& \cdots \ar[r,"{\left( \begin{smallmatrix} f_d^2 & a_{d+1} \\ 0 & 0 \end{smallmatrix} \right)}"] \& D_{d+1}^2 \oplus \Sigma D_0 \ar[r,"{\left( \begin{smallmatrix} 0 & \operatorname{id} \\ 0 & 0 \end{smallmatrix} \right)}"] \& \Sigma D_0 \oplus \Sigma D_1
\end{tikzcd}, \]
that is isomorphic to the direct sum of the two trivial \( (d+2) \)-angles \( D_1 \to D_1 \to 0 \to \cdots \to 0 \to \Sigma D_1 \) and \( D_0 \to 0 \to \cdots \to \Sigma D_0 \to \Sigma D_0 \) plus
\[ \begin{tikzcd}[sep=large,cramped,ampersand replacement=\&]
0 \ar[r] \& D_2^1 \ar[r,"{\left( \begin{smallmatrix} a_2 \\ - f_2^1 \end{smallmatrix} \right)}"] \& D_2^2 \oplus D_3^1 \ar[r,"{\left( \begin{smallmatrix} f_2^2 & a_3 \\ 0 & - f_3^1 \end{smallmatrix} \right)}"] \& \cdots \ar[r,"{\left( \begin{smallmatrix} f_d^2 & a_{d+1} \end{smallmatrix} \right)}"] \& D_{d+1}^2 \ar[r] \& 0
\end{tikzcd}. \]
By the first part of axiom (D1) this last sequence is also a \( (d+2) \)-angle, and since it contains a zero object it is contractible.  Let \( h_i \colon D_i^2 \oplus D_{i+1}^1 \to D_{i-1}^2 \oplus D_i^1 \) be a homotopy witnessing this contractability, that is
\[ \begin{pmatrix} 1_{D_i^2} & 0 \\ 0 & 1_{D_{i+1}^1} \end{pmatrix} =   \begin{pmatrix} h_{i+1}^{11} & h_{i+1}^{12} \\ h_{i+1}^{21} & h_{i+1}^{22} \end{pmatrix} \begin{pmatrix} f_i^2 & a_{i+1} \\ 0 & - f_{i+1}^1 \end{pmatrix} + \begin{pmatrix} f_{i-1}^2 & a_i \\ 0 & - f_i^1 \end{pmatrix} \begin{pmatrix} h_i^{11} & h_i^{12} \\ h_i^{21} & h_i^{22} \end{pmatrix} \]

By assumption all the \( f_j^i \) are radical morphisms, so \( h_{i+1}^{21} a_{i+1} \) and \( a_i h_i^{21} \) need to be invertible. Thus \( a_{i+1} \) is split mono and \( a_i \) is split epi. Since this holds for any \( i \) (with obvious simplifications at the end of the complex), it follows that all the \( a_i \) are actually isomorphisms.
\end{proof}

One situation that lends itself to applying the theory of $(d+2)$ angulated categories particularly nicely is $d$-dimensional representation theory in the sense of \cite{I_moSubcat, I_higherAuslander, IO}.

\begin{definition}
A subcategory \( \mathscr{X} \) of an abelian or triangulated category \( \mathscr{C} \) is called \emph{\(d\)-cluster tilting} if it is functorially finite, and
\begin{align*}
 \mathscr{X} & =  \{ C \in \mathscr{C} \mid \Ext^i(C, \mathscr{X}) = 0 \; \forall i \in \{1, \ldots, d-1\} \} \\
 & =  \{ C \in \mathscr{C} \mid \Ext^i(\mathscr{X}, C) = 0 \; \forall i \in \{1, \ldots, d-1\} \}.
\end{align*}
An object \( X \) is called \( d \)-cluster tilting if the subcategory \( \mathscr{X} = \add X \) is. (One may note that if \( \mathscr{C} \) has finite dimensional \( \Hom \)-spaces then the condition that \( \add X \) is functorially finite is automatic.)
\end{definition}

\begin{definition}
A finite dimensional algebra \( \Lambda \) is called \emph{$d$-representation finite} if its global dimension is at most \( d \), and the category \( \mod \Lambda \) admits a \( d \)-cluster tilting module \( M \).
\end{definition}

In particular the summands of $M$ do not have any extensions of degrees smaller than $d$, but they do admit a nice theory of $d$-extensions. See \cite{IO}. This leads to them admitting a $(d+2)$-angulated version of derived categories.

\begin{definition} \label{def.d-derived cat}
Let $\Lambda$ be a $d$-representation finite algebra, and $M$ as in the definition above. The \emph{\((d+2)\)-angulated derived category} of \( \Lambda \), denoted by \( \mathscr{D} \), is the additive category given as follows:
\begin{itemize}
\item The indecomposable objects of \( \mathscr{D} \) are symbols \( \Sigma^i X \), where $i \in \mathbb{Z}$ and $X$ is an indecomposable summand of $M$.
\item The morphisms of \( \mathscr{D} \) are given as
\[ \Hom_{\mathscr{D}}(\Sigma^i X, \Sigma^j Y) = \begin{cases} \Hom_{\Lambda}(X, Y) & i = j \\ \Ext_{\Lambda}^d(X, Y) & i+1 = j \\ 0 & \text{otherwise.} \end{cases} \] 
\end{itemize}
\end{definition}

This should be seen as a generalization of the description of derived categories of hereditary algebras (\(d=1\)): As in that case, any object is a sum of shifts of modules, and morphisms are given as $\Hom$ and $\Ext^d$ from the module category.

\begin{theorem} \label{thm.d-derived is d-ang}
Let $\Lambda$ be a $d$-representation finite algebra. Then its \( (d+2) \)-angulated derived category $\mathscr{D}$ is \((d+2)\)-angulated, with suspension \( \Sigma \) (i.e.\ sending the object \( \Sigma^i X \) to \( \Sigma^{i+1} X \)).
\end{theorem}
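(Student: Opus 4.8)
The plan is to realize $\mathscr{D}$ as a $d$-cluster tilting subcategory of an honest triangulated category and then invoke the $(d+2)$-angulation construction of \cite{GKO}. Concretely, I would work inside the bounded derived category $\Db(\mod \Lambda)$, whose suspension I write as $[1]$, and consider the full additive subcategory
\[ \mathscr{U} = \add \{ M[di] \mid i \in \mathbb{Z} \}. \]
The first step is to identify $\mathscr{D}$ with $\mathscr{U}$ via the assignment $\Sigma^i X \mapsto X[di]$ on indecomposables. Since $X[di] \cong Y[dj]$ in $\Db(\mod\Lambda)$ forces $i=j$ and $X \cong Y$, this is a bijection on indecomposable objects matching the ``symbols'' of Definition~\ref{def.d-derived cat}.

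Next I would check that the morphism spaces agree. In $\Db(\mod\Lambda)$ we have
\[ \Hom(X[di], Y[dj]) = \Ext_\Lambda^{d(j-i)}(X,Y), \]
and the hypotheses pin this down completely: the group vanishes for $j < i$ (negative $\Ext$ of modules) and for $j > i+1$ (as $d(j-i) > d \geq \operatorname{gldim}\Lambda$), while the $d$-cluster tilting property kills $\Ext^1,\dots,\Ext^{d-1}$, so that the only surviving values are $\Hom_\Lambda(X,Y)$ for $j=i$ and $\Ext_\Lambda^d(X,Y)$ for $j=i+1$. This is exactly the prescription of Definition~\ref{def.d-derived cat}, and the composition inherited from $\Db(\mod\Lambda)$ restricts to ordinary composition of module maps together with the Yoneda action of $\Ext^d_\Lambda$; hence $\mathscr{D} \simeq \mathscr{U}$ as additive categories, and under this equivalence the formal suspension $\Sigma$ of $\mathscr{D}$ corresponds to $[d]$.

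The crucial input is that $\mathscr{U}$ is a $d$-cluster tilting subcategory of $\Db(\mod\Lambda)$. This is precisely the content of the higher Auslander--Reiten theory of \cite{I_higherAuslander, IO}: for a $d$-representation finite algebra the $d$-cluster tilting module $M$ of $\mod\Lambda$ propagates, under the shifts $[di]$, to a $d$-cluster tilting subcategory of the derived category, with the required functorial finiteness and the two $\Ext$-orthogonality conditions. I expect this to be the main obstacle, in the sense that it is the one genuinely nontrivial ingredient; in the write-up I would isolate it as a citation to \cite{IO} rather than reproving it. By construction $\mathscr{U}$ is closed under $[d]$ (indeed under every $[di]$), so $[d]\mathscr{U} = \mathscr{U}$.

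With these two facts in hand the theorem follows by direct appeal to \cite{GKO}: any $d$-cluster tilting subcategory $\mathscr{U}$ of a triangulated category satisfying $[d]\mathscr{U} = \mathscr{U}$ carries a canonical $(d+2)$-angulated structure with suspension $[d]$, whose distinguished $(d+2)$-angles are exactly those sequences obtained by splicing a string of triangles of $\Db(\mod\Lambda)$ with terms in $\mathscr{U}$. Transporting this structure across the equivalence $\mathscr{D} \simeq \mathscr{U}$ yields the desired $(d+2)$-angulation of $\mathscr{D}$ with suspension $\Sigma$, completing the proof.
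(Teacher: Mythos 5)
Your proposal is correct and takes essentially the same approach as the paper: realize $\mathscr{D}$ as the subcategory $\add\{M[di] \mid i \in \mathbb{Z}\}$ of $\Db(\mod\Lambda)$, cite Iyama's result that it is $d$-cluster tilting, observe that it is closed under $[d]$, and invoke \cite{GKO}. The only difference is that you make explicit the identification of the morphism spaces of $\mathscr{D}$ with those of this subcategory, which the paper leaves implicit.
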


\begin{proof}
By \cite[Theorem~1.21]{I_higherAuslander}, the subcategory
\[ \add \{ \Sigma^{id}_{\text{triang}} M \mid i \in \mathbb{Z} \} \subseteq \Db(\mod \Lambda) \]
is \( d \)-cluster tilting, where \( \Sigma_{\text{triang}} \) denotes the suspension in the triangulated category \( \Db(\mod \Lambda) \). By construction this subcategory is closed under \( \Sigma^d_{\text{triang}} \). Thus, by \cite{GKO}, it is \( (d+2) \)-angulated.
\end{proof}

\begin{remark}
The suspension of the \( (d+2) \)-angulated category \( \mathscr{D} \) is given as \( \Sigma = \Sigma^d_{\text{triang}} \). In particular one needs to be careful to not confuse \( \Sigma \) with \( \Sigma_{\text{triang}} \), the latter of which doesn't even define an endofunctor of \( \mathscr{D} \).
\end{remark}

\begin{definition}
A \emph{Serre functor} on a $k$-category $\mathscr{C}$ is an endofunctor $S \colon \mathscr{C} \to \mathscr{C}$ giving rise to a functorial isomorphism
\[ \D \Hom_{\mathscr{C}}(X, Y) \iso \Hom_{\mathscr{C}}(Y, SX). \]
If a Serre functor exists, then it is unique up to natural isomorphism.
\end{definition}

\begin{example}
Let \( \Lambda \) be a finite dimensional algebra of finite global dimension. Then the functor \( - \otimes_{\Lambda}^{\mathbb{L}} \D \Lambda \) is a Serre functor of \( \Db(\mod \Lambda) \).

When realising the \( (d+2) \)-angulated derived category of a \( d \)-representation finite algebra \( \Lambda \) as a subcategory of its ordinary derived category as in the proof of Theorem~\ref{thm.d-derived is d-ang} above, then \( - \otimes_{\Lambda}^{\mathbb{L}} \D \Lambda \) restricts to an endofunctor of \( \mathscr{D} \). Thus, in particular, it also defines a Serre functor on \( \mathscr{D} \).
\end{example}

\begin{proposition} \label{prop.D_directed}
Assume \( \Lambda \) is a triangular \( d \)-representation finite algebra. Then its \( (d+2) \)-angulated derived category \( \mathscr{D} \) is directed. In particular, all non-zero endomorphisms of indecomposable objects are isomorphisms.
\end{proposition}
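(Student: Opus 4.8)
The plan is to equip the set of isomorphism classes of indecomposable objects of \( \mathscr{D} \) with a function \( h \) into a totally ordered set so that every nonzero morphism between non-isomorphic indecomposables strictly increases \( h \), and so that \( \End_{\mathscr{D}}(Z) = k \) for every indecomposable \( Z \). Directedness is exactly the existence of such an \( h \); and since an indecomposable \( Z \) has local endomorphism ring, the condition \( \End_{\mathscr{D}}(Z) = k \) is precisely the assertion that every nonzero endomorphism of \( Z \) is an isomorphism, so the ``in particular'' clause is immediate once \( h \) is produced.

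First I would reduce to the module category. By Definition~\ref{def.d-derived cat}, \( \Hom_{\mathscr{D}}(\Sigma^i X, \Sigma^j Y) \) vanishes unless \( j = i \), where it is \( \Hom_{\Lambda}(X, Y) \), or \( j = i+1 \), where it is \( \Ext_{\Lambda}^d(X, Y) \); in both cases \( j \geq i \). Hence every nonzero morphism weakly increases the shift index, and any cyclic composite of nonzero morphisms has total change of shift index zero, so it must consist entirely of morphisms preserving the shift index, that is, of morphisms of \( \add M \) inside \( \mod\Lambda \). It therefore suffices to find a height function \( \phi \) on the indecomposable summands of \( M \) with \( \End_{\Lambda}(X) = k \) for each such \( X \) and with \( \Hom_{\Lambda}(X, Y) \neq 0 \), \( X \not\cong Y \), forcing \( \phi(X) < \phi(Y) \); the assignment \( h(\Sigma^i X) = (i, \phi(X)) \), ordered lexicographically, then has the required property on all of \( \mathscr{D} \) (note that an \( \Ext^d \)-morphism lands in a strictly larger shift index, so it automatically increases the first coordinate).

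Next I would build \( \phi \) using higher Auslander--Reiten theory \cite{IO, I_higherAuslander}. Under the realization of \( \mathscr{D} \) inside \( \Db(\mod\Lambda) \) from the proof of Theorem~\ref{thm.d-derived is d-ang}, the indecomposable summands of \( M \) are the \( d \)-preprojective modules, namely those \( \nu_d^{-i}(P) \) (\( i \geq 0 \), \( P \) indecomposable projective) that lie in \( \mod\Lambda \), where \( \nu_d = S \circ \Sigma^{-1} \) is the higher Nakayama autoequivalence of \( \Db(\mod\Lambda) \) built from the Serre functor \( S \) and the suspension \( \Sigma = \Sigma^d_{\mathrm{triang}} \); this autoequivalence preserves the \( d \)-cluster-tilting subcategory \( \add\{\Sigma^{id}_{\mathrm{triang}} M\} \). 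I would set \( \phi(\nu_d^{-i} P) = (i, \psi(P)) \), ordered lexicographically, where \( \psi \) orders the indecomposable projectives. Here triangularity enters: since the quiver of \( \Lambda \) is acyclic, the only path from a vertex to itself is trivial, so \( \End_{\Lambda}(P) = k \), and a topological ordering of the vertices provides \( \psi \) with \( \Hom_{\Lambda}(P, P') \neq 0 \), \( P \not\cong P' \), forcing strict monotonicity of \( \psi \). Applying the autoequivalence \( \nu_d^{\,i} \) transports both facts to each fixed layer, giving \( \End_{\Lambda}(\nu_d^{-i}P) = k \) and strict monotonicity of \( \phi \) among summands in the same layer.

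The crux, which I expect to be the main obstacle, is the cross-layer vanishing
\[ \Hom_{\Lambda}(\nu_d^{-i}P,\, \nu_d^{-i'}P') = 0 \qquad \text{whenever } i > i', \]
since this is what forbids a nonzero map from lowering the first coordinate of \( \phi \). I would prove it by applying \( \nu_d^{\,i'} \) to rewrite the left-hand side as \( \Hom_{\Db}(\nu_d^{-m}P, P') \) with \( m = i - i' \geq 1 \), and then using Serre duality in the form \( S = \nu_d \Sigma = \nu_d[d] \): the dual space is \( \Hom_{\Db}(P', S\nu_d^{-m}P) = \Hom_{\Db}(P', (\nu_d^{-(m-1)}P)[d]) \), which, since \( P' \) is projective, equals \( \Hom_{\Lambda}(P', H^0((\nu_d^{-(m-1)}P)[d])) \). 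As \( \nu_d^{-(m-1)}P \) is again a genuine module (a \( d \)-preprojective of the intermediate layer \( m-1 < i \)) and \( d \geq 1 \), its \( d \)-fold shift has vanishing \( H^0 \), and the space is zero. This is the point where \( d \)-representation-finiteness does the essential work, through the fact that the intermediate \( \nu_d \)-layers consist of honest modules concentrated in degree zero; with this vanishing in hand, \( \phi \) has the desired monotonicity and the reduction of the second paragraph finishes the proof.
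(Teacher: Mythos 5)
Your proposal is correct, and its skeleton is the same as the paper's: both rest on Iyama's description of the indecomposables as \( \nu_d \)-translates of projectives, a layer structure across which nonzero morphisms can only travel in one direction, and triangularity of \( \Lambda \) to exclude cycles within a single layer. The differences are in execution. You grade in two stages: first the suspension grading built into Definition~\ref{def.d-derived cat}, which confines any cycle to \( \add M \subseteq \mod \Lambda \), and then the \( \nu_d \)-layers inside \( \add M \); the paper instead uses a single grading, writing \( \mathscr{D} = \add \{ \Sigma_{\text{triang}}^{-id} S^i \Lambda \mid i \in \mathbb{Z} \} \) and confining cycles directly to one slice \( \add \Sigma_{\text{triang}}^{-id} S^i \Lambda \simeq \add \Lambda \). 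More substantially, your key cross-layer vanishing \( \Hom_{\Lambda}(\nu_d^{-i}P, \nu_d^{-i'}P') = 0 \) for \( i > i' \) is proved by Serre duality, reducing it to \( \Ext_{\Lambda}^d(P', N) = 0 \) for \( P' \) projective and \( N \) a module, whereas the paper proves the equivalent statement \( \Hom_{\mathscr{D}}(\Lambda, \Sigma_{\text{triang}}^{-id} S^i \Lambda) = 0 \) for \( i > 0 \) by a degree-concentration argument: \( \Sigma_{\text{triang}}^{-id} S^{i-1} \D \Lambda \) sits in homological degrees that \( \Lambda \) cannot map into. The trade-off is worth noting: the paper's degree argument needs only \( \operatorname{gldim} \Lambda \leq d \), while your duality argument leans on the finer structural fact that the intermediate translate \( \nu_d^{-(m-1)}P \) is an honest module. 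That fact is true, but it is precisely the content of Iyama's structure theory for \( d \)-representation finite algebras (\cite[Theorem~1.23 and the preceding discussion]{I_higherAuslander}, the very citation the paper uses), and it is the load-bearing step of your cross-layer vanishing, so it deserves an explicit citation rather than the parenthetical justification you give it.
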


\begin{proof}
By \cite[Theorem~1.23 and the preceding discussion]{I_higherAuslander}, we know that
\[ \mathscr{D} = \add \{ \Sigma_{\text{triang}}^{-id}S^i \Lambda \mid i \in \mathbb{Z} \}  \subseteq \Db(\mod \Lambda) \] in its incarnation as a subcategory of the triangulated derived category, where \( S = - \otimes_{\Lambda}^{\mathbb{L}} \D \Lambda \) is the Serre functor.

For \( i > 0 \) we have that
\[ \Sigma_{\text{triang}}^{-id} S^i \Lambda = \Sigma_{\text{triang}}^{-id} S^{i-1} \D \Lambda \]
is concentrated in (homologically) positive degrees. Therefore
\[ \Hom_{\mathscr{D}}( \Lambda, \Sigma_{\text{triang}}^{-id} S^i \Lambda) = 0. \]
It follows that any cycle of maps between indecomposable objects in \( \mathscr{D} \) necessarily lies entirely inside one of the \( \add \Sigma_{\text{triang}}^{-id} S^i \Lambda \). Since \( \Sigma_{\text{triang}} \) and \( S \) are autoequivalences we may as well consider the case \( i = 0 \), that is ask about cycles in \( \add \Lambda \). These however don't exist by the assumption of \( \Lambda \) being triangular.
\end{proof}

\begin{remark}
We do not know any examples of non-triangular \( d \)-representation finite algebras. Thus, from the point of view of treating examples, the assumption that \( \Lambda \) is triangular is rather mild.
\end{remark}

\begin{definition}
A $(d+2)$-angulated category is called \emph{$2$-Calabi-Yau} if $\Sigma^2$ is a Serre functor.
\end{definition}

\begin{remark}
The notion of Calabi-Yau triangulated categories is classical. Here we extend this notion -- by using the same definition verbatim -- to $(d+2)$-angulated categories.
\end{remark}

The $(d+2)$-angulated categories we are most interested in in this paper are versions of cluster categories. Let us first recall this concept in the classical situation. Briefly, the idea is to force the derived category to become $2$-Calabi-Yau.

\begin{definition}
Let \( \mathscr{C} \) be an additive category, \( \mathtt{F} \colon \mathscr{C} \to \mathscr{C} \) an automorphism of \( \mathscr{C} \). Then the orbit category of \( \mathscr{C} \) with respect to \( \mathtt{F} \), which is denoted by \( \mathscr{C} / \mathtt{F} \), is given as follows:
\begin{itemize}
\item The objects of \( \mathscr{C} / \mathtt{F} \) are the same as the objects of \( \mathscr{C} \).
\item The morphisms of \( \mathscr{C} / \mathtt{F} \) are given by
\[ \Hom_{\mathscr{C} / \mathtt{F}} (C, D) = \coprod_{i \in \mathbb{Z}} \Hom_{\mathscr{C}}(C, \mathtt{F}^i D). \]
\end{itemize}
It follows that there is a natural functor \( \pi \colon \mathscr{C} \to \mathscr{C} / \mathtt{F} \). For this functor, one observes that \( \pi \circ \mathtt{F} \iso \pi \), and moreover \( \pi \) is universal amongst functors with this natural isomorphism.
\end{definition}

\begin{definition}[\cite{BMRRT}]
Let $H$ be a hereditary algebra. The \emph{cluster category} associated to $H$ is the orbit category $\mathscr{C} = \Db(\mod H) / \Sigma^{-2} S$, where $S = - \otimes_{H}^{\mathbb{L}} \D H$ is the Serre functor on $\Db(\mod \Lambda)$.
\end{definition}

\begin{theorem}[\cite{K}]
For a hereditary algebra \( H \), the cluster category \( \mathscr{C} \) as above is triangulated, and the canonical functor \( \pi \colon \Db(\mod \Lambda) \to \mathscr{C} \) is a triangle functor. 
\end{theorem}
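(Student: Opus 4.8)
The plan is to follow Keller's strategy from \cite{K}, since the essential difficulty is that the orbit category of a triangulated category by an autoequivalence is \emph{not} triangulated in general: the obstruction is that the cone of a morphism need not descend compatibly with the identification $X \iso FX$, where $F = \Sigma^{-2}S$. I would therefore not attempt to transport triangles naively. Instead I would realize $\mathscr{C}$ as a full subcategory of a genuinely triangulated category constructed by differential graded (DG) methods, and then exploit the hereditary hypothesis on $H$ to show that this subcategory is in fact the whole ambient category.

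First I would pass to a DG enhancement. Write $\mathscr{T} = \Db(\mod H) \iso \operatorname{per} A$ as the perfect derived category of a suitable DG algebra $A$ (for $H$ hereditary one may take $A = H$ itself, or a cofibrant resolution), and lift the autoequivalence $F$ to an invertible DG $A$-$A$-bimodule. Next I would form the DG orbit algebra, i.e.\ the DG category $B$ whose morphism complexes are $\bigoplus_{p \geq 0}\Hom_A(-, F^p-)$ with the evident composition, and pass to its derived category $\mathscr{D}(B)$. Inside $\mathscr{D}(B)$ I would let $\mathcal{U}$ be the thick triangulated subcategory generated by the image of $A$; this $\mathcal{U}$ is triangulated by construction, and the universal property of the orbit category yields a functor $\pi' \colon \mathscr{C} = \mathscr{T}/F \to \mathcal{U}$ through which $\pi$ factors, sending the triangles of $\mathscr{T}$ to triangles of $\mathcal{U}$. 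This $\mathcal{U}$ is the ``triangulated hull'' of the orbit category.

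The two remaining points are that $\pi'$ is fully faithful and dense. For full faithfulness I would compute morphism spaces in $\mathcal{U}$ and check that $\Hom_{\mathcal{U}}(\pi'X, \pi'Y) \iso \bigoplus_{p}\Hom_{\mathscr{T}}(X, F^p Y)$, which is exactly the definition of $\Hom_{\mathscr{T}/F}$; here one uses that for indecomposable $X,Y$ only finitely many of the groups $\Hom_{\mathscr{T}}(X, F^p Y)$ are non-zero, a finiteness consequence of $H$ being hereditary and finite-dimensional. For density one uses that $\mod H$ is hereditary, so every object of $\mathscr{T}$ splits as a direct sum of shifts of its cohomology modules, $X \iso \bigoplus_i H^i(X)[-i]$, and that $F$ moves modules across shifts in a controlled way (here $F \iso \tau\,\Sigma^{-1}$). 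A cone of a morphism between two orbit representatives can then be rewritten, using the cohomological splitting together with the identifications $Z \iso FZ$, as the image of an honest object of $\mathscr{T}$, so that $\mathcal{U}$ contains nothing beyond $\pi'(\mathscr{C})$.

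I expect the density step to be the main obstacle: this is precisely where the hereditary hypothesis is indispensable, and where the statement fails for a general triangulated $\mathscr{T}$. The delicate points are to verify that the cohomological splitting of objects is compatible with the orbit identifications, and to control which shifted copies $F^p(\mod H)$ can interact, so that every cone returns to a fixed fundamental domain of representatives. Once $\pi'$ is shown to be an equivalence, I would transport the triangulated structure from $\mathcal{U}$ back along $\pi'$ to $\mathscr{C}$; with this structure $\pi$ is a triangle functor essentially by construction, which completes the proof.
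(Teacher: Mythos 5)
The paper offers no proof of this statement at all --- it quotes the result directly from Keller \cite{K} --- and your outline (DG enhancement, DG orbit category, triangulated hull inside its derived category, full faithfulness from the finiteness of $\bigoplus_p \Hom_{\mathscr{T}}(X, F^p Y)$, and density from the splitting of objects into shifted modules that heredity provides, with $F \iso \tau\Sigma^{-1}$) is precisely the strategy of that reference. Your proposal is therefore correct and takes essentially the same route as the proof the paper points to.
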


The above definition of cluster category can easily be generalized to \(d\)-re\-pre\-sen\-ta\-tion finite algebras.

\begin{definition}[{\cite[Section~5]{OT}}] \label{def.cluster_cat}
Let \( \Lambda \) be \( d \)-representation finite, and \( \mathscr{D} \) be its \((d+2)\)-angulated derived category as in Definition~\ref{def.d-derived cat}.

The \emph{\((d+2)\)-angulated cluster category of \( \Lambda \)} is the orbit category
\[ \mathscr{C} = \mathscr{D} /  \Sigma^{-2} S, \]
where \( \Sigma \) denotes the \((d+2)\)-angulated suspension on \( \mathscr{D} \), and \( S = - \otimes_{\Lambda}^{\mathbb{L}} \D \Lambda \) is the Serre functor.
\end{definition}

This name is justified by the following result:

\begin{theorem}[{\cite[Section~5]{OT}}]
If \( \Lambda \) is \( d \)-representation finite, then the \( (d+2) \)-angulated cluster category is \(2\)-Calabi-Yau \( (d+2) \)-angulated. 
\end{theorem}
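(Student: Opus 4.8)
The plan is to mirror Keller's proof \cite{K} of the classical ($d=1$) case: realise $\mathscr{C}$ as a $d$-cluster tilting subcategory, closed under $\Sigma = \Sigma^d_{\text{triang}}$, of an ambient triangulated category; invoke the theorem of \cite{GKO} to endow it with a $(d+2)$-angulated structure; and finally check that the Serre functor of $\mathscr{D}$ descends to $\Sigma^2$ on the orbit, giving the $2$-Calabi-Yau property.

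First I would build the ambient triangulated category. Writing $\mathtt{F} = \Sigma^{-2}S = \Sigma^{-2d}_{\text{triang}}S$, an autoequivalence of $\Db(\mod\Lambda)$, I would form its triangulated orbit category $\mathscr{U}$ together with the projection $\pi$. One checks that $\mathtt{F}$ stabilises the $d$-cluster tilting subcategory $\mathscr{D} = \add\{\Sigma^{-id}_{\text{triang}}S^i\Lambda\}$ of Proposition~\ref{prop.D_directed}, so that $\pi$ restricts to $\mathscr{D}\to\mathscr{U}$, factors through $\mathscr{D}/\mathtt{F}$, and identifies $\mathscr{C} = \mathscr{D}/\mathtt{F}$ with the full subcategory $\pi(\mathscr{D})\subseteq\mathscr{U}$. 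I would then transport the $d$-cluster tilting property across $\pi$: the Hom- and Ext-groups in $\mathscr{U}$ between objects of $\pi(\mathscr{D})$ are finite direct sums over $\mathbb{Z}$ of the corresponding groups in $\Db(\mod\Lambda)$, so the vanishing of $\Ext^i_{\mathscr{U}}(\pi(\mathscr{D}),\pi(\mathscr{D}))$ for $1\le i\le d-1$ follows from the analogous vanishing for $\mathscr{D}\subseteq\Db(\mod\Lambda)$ (Theorem~\ref{thm.d-derived is d-ang}), while functorial finiteness is automatic from finite-dimensionality of Hom-spaces. Since $\mathscr{D}$, hence $\pi(\mathscr{D})$, is closed under $\Sigma^d_{\text{triang}}$, the theorem of \cite{GKO} applies and equips $\mathscr{C}\iso\pi(\mathscr{D})$ with a $(d+2)$-angulated structure whose suspension is $\Sigma^d_{\text{triang}} = \Sigma$.

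For the $2$-Calabi-Yau property I would verify directly that $\Sigma^2$ is a Serre functor on $\mathscr{C}$. Using the orbit formula and that $S$ commutes with $\Sigma$, one has $\mathtt{F}^i\Sigma^2 \iso S\mathtt{F}^{i-1}$, so
\[ \Hom_{\mathscr{C}}(Y,\Sigma^2 X) = \coprod_i \Hom_{\mathscr{D}}(Y,\mathtt{F}^i\Sigma^2 X) \iso \coprod_i \D\Hom_{\mathscr{D}}(\mathtt{F}^{i-1}X, Y), \]
where the isomorphism is Serre duality in $\mathscr{D}$ (with its Serre functor $S$). Reindexing and applying the autoequivalences $\mathtt{F}^{-k}$ rewrites the right-hand side as $\D\coprod_k\Hom_{\mathscr{D}}(X,\mathtt{F}^{-k}Y) = \D\Hom_{\mathscr{C}}(X,Y)$; here one uses that only finitely many summands are nonzero, a consequence of the directedness established in Proposition~\ref{prop.D_directed}, so that $\D$ commutes with the coproduct. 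Functoriality of this identification is routine.

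The main obstacle is the first step: showing that the orbit category is genuinely triangulated rather than merely admitting a triangulated hull. For hereditary algebras this is Keller's theorem \cite{K}, but for $d>1$ the algebra $\Lambda$ has global dimension $d>1$ and the orbit must be controlled more delicately; the directedness and the finiteness of Hom-spaces guaranteed by Proposition~\ref{prop.D_directed} are exactly what is needed to ensure that the orbit coincides with its hull and that $\pi(\mathscr{D})$ is a well-behaved subcategory. Once this ambient triangulated structure is in hand, the remaining steps are formal.
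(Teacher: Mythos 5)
Your overall skeleton --- realise \( \mathscr{C} \) as a \( d \)-cluster tilting subcategory, closed under \( \Sigma^d_{\text{triang}} \), of an ambient triangulated category, apply \cite{GKO}, and then verify that \( \Sigma^2 \) is a Serre functor via the orbit-category Hom formula --- is exactly the strategy of \cite[Section~5]{OT}, which is where this paper's proof lives. The fatal gap is in your first step. For \( d > 1 \) the orbit category \( \Db(\mod \Lambda)/\Sigma^{-2}S \) of the \emph{whole} derived category is in general \emph{not} triangulated, and neither directedness nor Hom-finiteness repairs this. Keller's theorem \cite{K} crucially uses that every indecomposable object of \( \Db(\mod H) \) is a shift of a module --- this is where hereditarity enters --- and for an algebra of global dimension \( d>1 \) this fails: cones in the would-be triangulated structure of maps between orbit objects need not be isomorphic to orbit objects, which is precisely why the triangulated hull is in general strictly larger than the orbit category. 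Your assertion that ``directedness and finiteness of Hom-spaces ensure that the orbit coincides with its hull'' is not a theorem and is not argued; directedness of the subcategory \( \mathscr{D} \) (which, note, requires the additional hypothesis that \( \Lambda \) is triangular, via Proposition~\ref{prop.D_directed} --- a hypothesis absent from the statement being proved) says nothing about cones of morphisms between arbitrary complexes in \( \Db(\mod\Lambda) \).

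The actual proof avoids this issue by never claiming the naive orbit category of \( \Db(\mod\Lambda) \) is triangulated. Instead, the ambient category is taken to be the generalized cluster category in the sense of Amiot, as extended by Guo: the \emph{triangulated hull} of \( \Db(\mod\Lambda)/S\Sigma^{-2d}_{\text{triang}} \), constructed via dg orbit categories, which is a \( 2d \)-Calabi-Yau triangulated category (using that the global dimension of \( \Lambda \) is at most \( d \leq 2d-1 \)). One then shows that the image of \( \mathscr{D} \) in this hull is \( d \)-cluster tilting there and closed under \( \Sigma^d_{\text{triang}} \), so \cite{GKO} applies, and the \( 2 \)-Calabi-Yau property of the resulting \( (d+2) \)-angulated category is inherited from the \( 2d \)-Calabi-Yau property of the hull. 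Finally --- a point your proposal implicitly takes for granted --- one must prove that this image is equivalent to the orbit category \( \mathscr{D}/\Sigma^{-2}S \) of Definition~\ref{def.cluster_cat}, i.e.\ that Hom-spaces in the hull between image objects are computed by the orbit formula; in the hull, unlike in a genuine orbit category, this formula is not a definition but requires a vanishing argument. Your remaining steps (transporting the \( d \)-cluster tilting property, and the Serre-duality computation, where the required finiteness follows from the structure of Hom-spaces as in Observation~\ref{obs.fundamental_domain} rather than from directedness) would go through once a genuine ambient triangulated category is in hand.
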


\begin{observation} \label{obs.fundamental_domain}
Let \( \Lambda \) be \(d\)-representation finite. The subset
\[ \add (M \oplus \sus \Lambda) \subseteq \mathscr{D} \] 
is a \emph{fundamental domain} of the \( (d+2) \)-angulated cluster category in the sense that the natural projection functor induces a bijection between isomorphism classes of objects in the fundamental domain and isomorphism classes of objects in the \( (d+2) \)-angulated cluster category.

For \( X \) and \( Y \) in our fundamental domain we observe that
\[ \Hom_{\mathscr{C}}(X, Y) = \Hom_{\mathscr{D}}(X, Y) \oplus \Hom_{\mathscr{D}}(X, \sus^2 S^{-1} Y). \]
\end{observation}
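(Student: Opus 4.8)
The plan is to reduce everything to the explicit description of $\mathscr D$ in Definition~\ref{def.d-derived cat} together with the structure theory of the functor by which we pass to the orbit category. Write $F = \Sigma^{-2} S$ for the autofunctor defining $\mathscr C = \mathscr D / F$. Since $\Sigma = \Sigma_{\text{triang}}^d$, the \emph{$d$-Nakayama functor} $\nu_d = \Sigma_{\text{triang}}^{-d} S = \Sigma^{-1} S$ satisfies $F = \Sigma^{-1} \nu_d$. I would first record the action of $\nu_d$ on the indecomposables of $\mathscr D$, taken from Iyama's theory \cite{I_higherAuslander}: $\nu_d$ is an autoequivalence preserving $\mathscr D$; on a level-zero indecomposable summand $M_a$ of $M$ it acts as the $d$-Auslander--Reiten translate $\tau_d$ (again a summand of $M$) when $M_a$ is non-projective, while it sends an indecomposable projective $P_j$ to $\Sigma^{-1} I_j$, where $I_j = \nu P_j$ is the corresponding injective. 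Dually $\nu_d^{-1}$ acts as $\tau_d^{-1}$ on non-injectives and sends $I_j$ to $\Sigma P_j$. The crucial consequences, all used below, are that $\tau_d^{-1}$ never hits a projective, $\tau_d$ never hits an injective, and that $F = \Sigma^{-1}\nu_d$ \emph{strictly lowers} the $\Sigma$-level of an indecomposable (by $1$ generically, by $2$ across a projective).

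For the first assertion I follow the classical template of \cite{BMRRT, K}. General orbit-category nonsense gives that the isomorphism classes of objects of $\mathscr C$ are the $\langle F \rangle$-orbits of isomorphism classes of objects of $\mathscr D$; here the orbits of indecomposables are free, since $F$ strictly changes the $\Sigma$-level and hence $F^n Z \not\iso Z$ for $n \neq 0$. It then suffices to prove that the indecomposables $\{M_a\} \cup \{\Sigma P_j\}$ of $\add(M \oplus \Sigma \Lambda)$ form a complete and irredundant set of $F$-orbit representatives. Because $F$ lowers the level monotonically, each orbit meets the two-level window (level-zero modules together with level-one objects) in a controlled way; I would check that translating any $\Sigma^i M_a$ by a suitable power of $F$ lands it at level $0$ or $1$, and that a level-one object obtained this way is forced to be a shifted projective --- this is exactly where I use that $\tau_d^{-1}$ produces no projective, so that an orbit cannot meet the window both at a level-zero module and at a level-one shifted projective. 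This yields both surjectivity and irredundancy.

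For the $\Hom$-formula I use the defining coproduct of the orbit category,
\[ \Hom_{\mathscr C}(X,Y) = \bigoplus_{n \in \mathbb Z} \Hom_{\mathscr D}(X, F^n Y), \]
and identify the two surviving summands: the $n = 0$ term is $\Hom_{\mathscr D}(X,Y)$, and since $F^{-1} = \Sigma^2 S^{-1}$ the $n = -1$ term is $\Hom_{\mathscr D}(X, \sus^2 S^{-1} Y)$, precisely as claimed. It remains to kill every other summand for $X, Y$ indecomposable in $\add(M \oplus \Sigma \Lambda)$. Here I combine two facts: by Definition~\ref{def.d-derived cat}, for indecomposables $\Hom_{\mathscr D}(\Sigma^p A, \Sigma^q B)$ vanishes unless $q \in \{p, p+1\}$, and in the boundary case $q = p+1$ it equals $\Ext^d_\Lambda(A,B)$; and $F^n Y$ is again indecomposable, sitting at a single level $q_n$ that is strictly decreasing in $n$. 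Monotonicity confines the level-compatible indices $n$ to an interval meeting at most the two consecutive levels $p, p+1$; the potential ``extra'' index beyond $n \in \{0,-1\}$ always lands in the $\Ext^d$ boundary case with either $A$ projective (when $X = \Sigma P_j$) or $B$ injective (when $Y$ is injective), and $\Ext^d$ out of a projective or into an injective vanishes. Carrying this out in the few sub-cases according to whether $X$ and $Y$ are modules or shifted projectives --- exactly as in the hereditary case --- leaves only $n \in \{0,-1\}$.

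The main obstacle is this last step: showing that the naive level count, which would permit two level-compatible indices, collapses to the single pair $n \in \{0,-1\}$. This is the whole reason the window is taken to be $\add(M \oplus \Sigma \Lambda)$ rather than something larger, and it rests on the interplay between the two-term ($\Hom$ and $\Ext^d$) morphism structure of $\mathscr D$ and the projective/injective vanishing of $\Ext^d$; the same phenomenon underlies the irredundancy in the first part. The bookkeeping of $\Sigma$-levels under $\nu_d$ --- in particular that crossing a projective costs two levels --- is routine but must be tracked carefully to be sure no further summand sneaks in.
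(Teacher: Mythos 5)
Your proposal is correct. Note, however, that the paper offers no proof at all of this statement --- it is labelled an \emph{Observation} and treated as immediate from Iyama's description of \( \mathscr{D} \) (via the construction recalled from \cite[Section~5]{OT}) --- so there is no argument in the paper to compare yours against; what you have done is supply the details the authors chose to suppress. Your route is the natural one and matches what the paper implicitly relies on: writing \( F = \Sigma^{-2}S = \Sigma^{-1}\nu_d \), using that \( \nu_d \) acts as \( \tau_d \) on non-projective summands of \( M \) and sends \( P_j \) to \( \Sigma^{-1}I_j \), so that \( F \) strictly lowers the \( \Sigma \)-level by \( 1 \) or \( 2 \); this gives freeness of the orbits, and the facts that \( \tau_d^{-1} \) never produces a projective (irredundancy) and that descending/ascending orbits must pass through the window \( \add(M \oplus \Sigma\Lambda) \) (surjectivity). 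The \( \Hom \)-computation via the orbit-category coproduct, with the level structure of \( \Hom_{\mathscr{D}} \) (two consecutive levels, with the boundary level carrying \( \Ext^d_\Lambda \)) plus \( \Ext^d_\Lambda(P,-) = 0 \) for \( P \) projective, correctly kills every index outside \( n \in \{0,-1\} \). One small simplification: in the case analysis the vanishing \( \Ext^d_\Lambda(-,I) = 0 \) for \( I \) injective is never actually needed --- whenever the target \( F^nY \) involves an injective being crossed, its level jumps by \( 2 \) and the term already dies by the level restriction; only the projective-source vanishing (for \( X = \Sigma P_j \)) does real work. This is harmless, but it slightly shortens the bookkeeping you flag as the main obstacle.
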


\begin{proposition} \label{prop.C_no_loops}
Let \( \Lambda \) be triangular \( d \)-representation finite. Let \( X \in \mathscr{C} \) be an indecomposable object in the \( (d+2) \)-angulated cluster category of \( \Lambda \). Then
\begin{itemize}
\item any non-zero endomorphism of \( X \) is an automorphism;
\item for any \( (d+2) \)-angle
\[ \begin{tikzcd}[sep=small,cramped] X \ar[r] & C_0 \ar[r] & C_1 \ar[r] & \cdots \ar[r] & C_d \ar[r] & \sus X \end{tikzcd} \]
with all morphisms in the radical of \( \mathscr{C} \), we have \( X \not\in \add \{ C_0, \ldots, C_d\} \). 
\end{itemize}
\end{proposition}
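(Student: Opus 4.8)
The plan is to handle the two bullet points separately, in both cases reducing to the directedness of $\mathscr D$ from Proposition~\ref{prop.D_directed} together with the fundamental-domain description of morphisms in Observation~\ref{obs.fundamental_domain}.

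For the first claim I would fix a representative of $X$ in the fundamental domain $\add(M\oplus\sus\Lambda)$ and use Observation~\ref{obs.fundamental_domain} to write $\End_{\mathscr C}(X)=\End_{\mathscr D}(X)\oplus\Hom_{\mathscr D}(X,\sus^2S^{-1}X)$. By Proposition~\ref{prop.D_directed} the ring $\End_{\mathscr D}(X)$ is a division ring, since a nonzero non-invertible endomorphism of an indecomposable would be a cycle. So it suffices to show the extra summand vanishes, and for this I would compute degrees in $\Db(\mod\Lambda)$, exactly as in the proof of Proposition~\ref{prop.D_directed}. Since $\operatorname{pd}\D\Lambda\le d$, the object $S^{-1}\Lambda$ has cohomology $\Ext^i_\Lambda(\D\Lambda,\Lambda)$ in degree $i$ for $0\le i\le d$, so $\sus^2S^{-1}\Lambda=\Sigma_{\text{triang}}^{2d}S^{-1}\Lambda$ is concentrated in strictly negative degrees; hence $\Hom_{\mathscr D}(\Lambda,\sus^2S^{-1}\Lambda)=H^0(\sus^2S^{-1}\Lambda)=0$. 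Writing $X$ as a summand of $\Sigma_{\text{triang}}^{-i_0d}S^{i_0}\Lambda$ and applying the autoequivalence $\Sigma_{\text{triang}}^{i_0d}S^{-i_0}$ (which commutes with $\sus$ and $S$, and carries $X$ into $\add\Lambda$) identifies $\Hom_{\mathscr D}(X,\sus^2S^{-1}X)$ with a direct summand of $\Hom_{\mathscr D}(\Lambda,\sus^2S^{-1}\Lambda)=0$. Thus $\End_{\mathscr C}(X)=\End_{\mathscr D}(X)$ is a division ring, which is precisely the statement that every nonzero endomorphism is an automorphism; in particular every radical endomorphism of $X$ vanishes.

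For the second claim I would argue by contradiction, assuming that some $C_j$ has an indecomposable summand $V\cong X$, with inclusion $\iota\colon V\to C_j$ and retraction $\pi\colon C_j\to V$. Write the angle as $X\xrightarrow{f_0}C_0\to\cdots\to C_d\xrightarrow{f_{d+1}}\sus X$, with the conventions $C_{-1}=X$ and $C_{d+1}=\sus X$, so that the maps adjacent to $C_j$ are $f_j$ and $f_{j+1}$. By the weak kernel/cokernel property, $f_j$ is a weak kernel of $f_{j+1}$ and $f_{j+1}$ is a weak cokernel of $f_j$, which yields two factorizations. If $\pi f_j=0$, then $\pi$ factors as $\pi=\beta f_{j+1}$, so $\operatorname{id}_V=\pi\iota=\beta(f_{j+1}\iota)$ is a radical endomorphism of $V\cong X$, hence $0$ by the first bullet --- a contradiction. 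Symmetrically, if $f_{j+1}\iota=0$, then $\iota=f_j\alpha$ and $\operatorname{id}_V=(\pi f_j)\alpha$ is again radical, a contradiction. In particular the case $j=0$ is settled at once, since there $\pi f_0\colon X\to X$ is a radical endomorphism and therefore already zero. Hence the whole statement reduces to excluding one last configuration: an interior copy $V\cong X$ in some $C_j$ for which \emph{both} $\pi f_j\ne0$ and $f_{j+1}\iota\ne0$.

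The hard part is ruling out this remaining configuration, and it is the step I expect to be the main obstacle, precisely because consecutive composites in a $(d+2)$-angle vanish, so one cannot chain the nonzero maps $C_{j-1}\xrightarrow{\pi f_j}X\xrightarrow{f_{j+1}\iota}C_{j+1}$ into a cycle that would contradict directedness. To overcome this I would lift the angle to $\mathscr D$, using that the $(d+2)$-angulation of $\mathscr C$ is the one induced from $\mathscr D$ by the orbit construction, so that a radical angle lifts to a radical angle in $\mathscr D$. In $\mathscr D$ the formula for morphisms in Definition~\ref{def.d-derived cat} shows every map either preserves the $\sus$-degree or raises it by one; since the total shift from $X$ to $\sus X$ is $1$, the terms split into two consecutive degrees with a single degree-raising map, every summand at the upper degree is automatically non-isomorphic to $X$, and away from the jump the angle is an honest exact sequence of $\Lambda$-modules in the $d$-cluster tilting subcategory $\add M$. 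It then remains to prove that an indecomposable with division endomorphism ring cannot appear as an interior summand of such a reduced sequence; here I would combine the directedness of $\add M$ with the vanishing $\Ext^i(\add M,\add M)=0$ for $0<i<d$ that defines $d$-cluster tilting, this $\Ext$-vanishing being exactly the hypothesis not yet used and, I anticipate, the crux of the argument.
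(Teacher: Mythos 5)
Your first bullet is essentially the paper's own argument (reduce to \( \add \Lambda \) by an autoequivalence of \( \mathscr{D} \), kill the wrap-around summand \( \Hom_{\mathscr{D}}(X, \sus^2 S^{-1} X) \) by a degree count, then invoke Proposition~\ref{prop.D_directed}), and your weak-kernel/weak-cokernel reduction at the start of the second bullet is correct. The fatal gap is the sentence ``every summand at the upper degree is automatically non-isomorphic to \( X \)''. What is true is that a level-one summand of the lifted \( (d+2) \)-angle is not isomorphic \emph{in \( \mathscr{D} \)} to the chosen lift \( \widetilde{X} \); but the proposition is a statement about \( \mathscr{C} = \mathscr{D}/\sus^{-2}S \), where every object \( (\sus^{2}S^{-1})^{i}\widetilde{X} \) also becomes isomorphic to \( X \), and such objects can perfectly well sit at level one. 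Concretely, for \( d = 1 \) and \( \Lambda \) the path algebra of \( 1 \to 2 \) (simples \( S_i \), projectives \( P_i \)) one has \( S^{-1}P_2 = \sus^{-1}S_1 \), hence \( \sus^2 S^{-1} P_2 = \sus S_1 \): so \( \sus S_1 \not\iso P_2 \) in \( \mathscr{D} \), yet \( \sus S_1 \iso P_2 \) in the cluster category. Thus ruling out copies of \( \widetilde{X} \) in the lifted angle is not enough; you must also rule out summands isomorphic to \( \sus^2S^{-1}\widetilde{X} \). This is exactly the step the paper supplies and your outline lacks: taking \( X \) projective, it observes that the entire lifted angle lies in the fundamental domain \( \add(M \oplus \sus\Lambda) \) of Observation~\ref{obs.fundamental_domain}, which meets each \( \sus^{-2}S \)-orbit in exactly one object, so that non-appearance in \( \mathscr{D} \) really does imply non-appearance in \( \mathscr{C} \).

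Even within \( \mathscr{D} \) your argument is not complete. The structural claims in your last paragraph are asserted rather than proved, and the picture is not quite right: level-zero and level-one summands can be mixed inside a single term \( D_j \), so there is no single ``degree-raising map'' separating a module part from a shifted part (already for \( d=1 \) the radical triangle \( P_1 \to S_1 \to \sus P_2 \to \sus P_1 \) mixes levels across terms). More importantly, the step you yourself identify as ``the crux'' --- excluding an interior copy of \( \widetilde{X} \) from the resulting sequence --- is left as a plan (``directedness of \( \add M \) plus \( \Ext \)-vanishing''), not an argument; so the proof is openly unfinished at its decisive point. For comparison, the paper's route is: reduce to \( X \) projective so that \( \Hom_{\mathscr{C}}(X, C_0) \) has no wrap-around component and the first map lifts; use Lemma~\ref{lem.d-cone_unique} to identify the given angle with the image of a radical lift; exclude further copies of \( \widetilde{X} \) by the directedness of \( \mathscr{D} \) (Proposition~\ref{prop.D_directed}); and exclude the other representatives \( (\sus^2S^{-1})^i\widetilde{X} \) by the fundamental-domain argument above. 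Your proposal finds the right ambient strategy and a correct preliminary reduction, but the two essential steps --- the orbit/fundamental-domain identification and the exclusion of interior copies --- are respectively missing and deferred, so the proof as proposed does not go through.
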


\begin{proof}
We may assume that \( X \) is indecomposable projective. Then any endomorphism of \( X \) in \( \mathscr{C} \) is induced by an endomorphism in \( \mathscr{D} \). In particular the first claim follows from Proposition~\ref{prop.D_directed}.

For the second claim, assume that \( C_0 \) also lies in our fundamental domain. It follows that the given map from \( X \) to \( C_0 \) in \( \mathscr{C} \) is the image of a map in \( \mathscr{D} \). This map may be completed to a \( (d+2) \)-angle in \( \mathscr{D} \), with all morphisms being radical morphisms. By the uniqueness of Lemma~\ref{lem.d-cone_unique} we observe that the given \( (d+2) \)-angle is isomorphic to the image of the \( (d+2) \)-angle in \( \mathscr{D} \). Now, by Proposition~\ref{prop.D_directed}, \( X \) cannot appear in any other terms of the \( (d+2) \)-angle in \( \mathscr{D} \). On the other hand we may observe that the entire \( (d+2) \)-angle in \( \mathscr{D} \) lies in the fundamental domain (since both \( X \) and \( \Sigma X \) do), so it cannot contain any \( (\Sigma^2S^{-1})^i X \) for \( i \neq 0 \). It follows that the image in \( \mathscr{C} \) does not contain any other copies of \( X \) either.

\end{proof}

\section{Mutation of cluster tilting objects} \label{sect.mutation}

Let \(\mathscr{C}\) be a $(d+2)$-angulated category, with suspension $\sus$. Assume $\mathscr{C}$ is $2$-Calabi-Yau.

\begin{definition}
An object $T \in \mathscr{C}$ is called \emph{cluster tilting} if
\begin{enumerate}
\item it is \emph{rigid}, that is $\Hom_{\mathscr{C}}(T, \sus T) = 0$, and
\item it has the \emph{resolving property}, that is for any object $K \in \mathscr{C}$ there is a $(d+2)$-angle
\[ \begin{tikzcd}[sep=small,cramped] \sus^{-1} K \ar[r] & T_d \ar[r] & T_{d-1} \ar[r] & \cdots \ar[r] & T_1 \ar[r] & T_0 \ar[r] & K \end{tikzcd} \]
with $T_i \in \add T$.
\end{enumerate}
\end{definition}

The basic idea of mutation is to replace an individual indecomposable summand of a cluster tilting object. Let us fix names for the cluster tilting object and the summand we want to replace, which we will use throughout this section.

\begin{notation} \label{not.mut}
Throughout, \( T \in \mathscr{C} \) denotes a basic cluster tilting object. We denote its indecomposable summands by \( E_i \), so that \( T = \bigoplus_{i=1}^n E_i \). We pick \( m \in \{1, \ldots, n \} \) and try to replace the indecomposable summand \( E_m \). For convenience, we write \( \overline{T} = \bigoplus_{\substack{i = 1 \\ i \neq m}}^n E_i \) for the sum of the remaining summands.
\end{notation}

\begin{definition} \label{def.mutation}
A \emph{mutation} of \(T\) at \(E_m\) is given by a different (up to isomorphism) basic cluster tilting object \( T^{\star} = \overline{T} \oplus E_m^{\star} \). That is, we replace the indecomposable summand \(E_m\) by a different object \( E_m^{\star} \) without destroying the cluster tilting property.

We will see below that \( E_m^{\star} \) is unique and indecomposable if it exists. However, in contrast to the classical case (\(d=1\)), there need not be such an \( E_m^{\star} \). If it does exist, we call \(E_m\) \emph{mutable}.
\end{definition}

\begin{theorem} \label{thm.find_replacement}
  Let \( T \), \(E_m\) and \(\overline{T}\) be as in Notation~\ref{not.mut} above. Assume that there is no loop at \( E_m \) in the quiver of \(T\). Let \(E_m^{\star} \not\iso E_m \) be a non-zero object,  such that \( \overline{T} \oplus E_m^{\star} \) is basic. Then the following are equivalent:
\begin{enumerate}
\item \( \overline{T} \oplus E_m^{\star} \) is cluster tilting. (So it is a mutation of \(T\) at \(E_m\).)
\item \( \overline{T} \oplus E_m^{\star} \) is rigid.
\item There is a \( (d+2) \)-angle
\[ \begin{tikzcd}[sep=small,cramped] \sus^{-1} E_m \ar[r] & E_m^{\star} \ar[r] & \overline{T}_d \ar[r] & \overline{T}_{d-1} \ar[r] & \cdots \ar[r] & \overline{T}_1 \ar[r,"f"] & E_m \end{tikzcd} \]
with $\overline{T}_i \in \add \overline{T}$ such that $f$ is a right $\overline{T}$-approximation.

Such a \( (d+2) \)-angle will be called a \emph{right exchange $(d+2)$-angle}.
\item There is a \( (d+2) \)-angle
\[ \begin{tikzcd}[sep=small,cramped] E_m \ar[r,"g"] & \overline{T}^1 \ar[r] & \overline{T}^2 \ar[r] & \cdots \ar[r] & \overline{T}^d \ar[r] & E_m^{\star} \ar[r] & \sus E_m \end{tikzcd} \]
with $\overline{T}^i \in \add \overline{T}$ such that $g$ is a left $\overline{T}$-approximation.

Such a \( (d+2) \)-angle will be called a \emph{left exchange $(d+2)$-angle}.
\end{enumerate}
In particular, (3) and (4) show an object \( E_m^{\star} \) satisfying these conditions is unique (by Lemma~\ref{lem.d-cone_unique}) and indecomposable, if it exists.

Moreover, if the above equivalent conditions are satisfied then
\begin{enumerate}
\setcounter{enumi}{4}
\item For any object \( K \in \mathscr{C} \) there is a resolving \( (d+2) \)-angle
\[ \begin{tikzcd}[sep=small,cramped] \sus^{-1} K \ar[r] & T_d \ar[r] & T_{d-1} \ar[r] & \cdots \ar[r] & T_1 \ar[r] & T_0 \ar[r] & K \end{tikzcd} \]
with \( T_i \in \add T \), such that ``middle terms'' \(T_{d-1}, \ldots, T_1 \) all lie in \( \add \overline{T} \).
\end{enumerate}
\end{theorem}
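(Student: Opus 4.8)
The plan is to establish the cycle $(1)\Rightarrow(2)\Rightarrow(3)\Rightarrow(1)$, to treat $(3)\Leftrightarrow(4)$ by Serre duality, and to deduce (5) once the equivalence is in hand. The implication $(1)\Rightarrow(2)$ is immediate from the definitions. The real work is a single construction carried out \emph{before} matching against the given $E_m^{\star}$: from the cluster tilting property of $T$ and the no-loop hypothesis alone I will produce a distinguished candidate right exchange $(d+2)$-angle with a well-defined cosyzygy $Y$, and the bulk of the proof will consist of showing that each of the conditions (1)--(4) forces $E_m^{\star}\iso Y$.

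To build the candidate, choose a minimal right $\add\overline{T}$-approximation $f\colon\overline{T}_1\to E_m$ (available since $\add\overline{T}$ is functorially finite), and complete it -- placed as the final map via rotation (D2) -- to a $(d+2)$-angle $\sus^{-1}E_m\to Y\to\overline{T}_d\to\cdots\to\overline{T}_1\xrightarrow{f}E_m$. Starting from a resolving $(d+2)$-angle of $E_m$ and reducing it to have only radical maps (splitting off trivial summands via the Krull--Schmidt property and the fact that an angle with a zero term is contractible) produces such an angle; the point is to arrange \emph{all} intermediate terms in $\add\overline{T}$. The no-loop hypothesis enters exactly here: it forces every radical endomorphism of $E_m$ to factor through $\add\overline{T}$, which lets me strip every $E_m$-summand out of the middle terms and out of the term adjacent to $E_m$, so that $f$ is genuinely a right $\add\overline{T}$-approximation and $Y$ is pinned down. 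Weak uniqueness, Lemma~\ref{lem.d-cone_unique}, makes this angle unique up to isomorphism, so $Y$ is well defined; minimality together with no-loop then force $Y$ to be indecomposable and $Y\not\iso E_m$.

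For $(3)\Rightarrow(1)$ I first check that $\overline{T}\oplus Y$ is rigid by applying $\Hom(\overline{T},-)$ and $\Hom(Y,-)$ to the candidate angle and running the long exact sequences a $(d+2)$-angle induces (each map being a weak kernel of the next): rigidity of $T$ kills the groups $\Hom(\overline{T},\sus^j\overline{T}_i)$, the approximation property of $f$ makes the relevant connecting map surjective, and the $2$-Calabi-Yau isomorphism $\D\Hom(A,B)\iso\Hom(B,\sus^2 A)$ converts between the two sides; chasing these inputs yields $\Hom(Y,\sus\overline{T})=\Hom(\overline{T},\sus Y)=\Hom(Y,\sus Y)=0$. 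The resolving property of $\overline{T}\oplus Y$ then follows by splicing the candidate angle into an $\add T$-resolution of an arbitrary object, using the higher octahedral axiom (D4) to trade $E_m$-summands for $\add(\overline{T}\oplus Y)$-terms. Together with the definitional $(1)\Rightarrow(2)$, it remains to prove $(2)\Rightarrow(3)$: rigidity of $\overline{T}\oplus E_m^{\star}$ forces $E_m^{\star}\iso Y$, since we have already exhibited $\overline{T}\oplus Y$ as a cluster tilting basic complement to $\overline{T}$ distinct from $E_m$, and the uniqueness of such a complement is the standard uniqueness-of-mutation argument, obtained by applying $\Hom(E_m^{\star},-)$ to the candidate angle and invoking Lemma~\ref{lem.d-cone_unique}. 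The equivalence $(3)\Leftrightarrow(4)$ comes from the Serre functor $\sus^2$: dualizing via $\D\Hom(A,B)\iso\Hom(B,\sus^2 A)$ turns the right approximation into a left one and, after rotation, carries the right exchange angle to the left exchange angle with the same end term, which also shows the two constructions select the same $E_m^{\star}$. Finally, for (5) I resolve an arbitrary $K$ over the now-available cluster tilting object $T^{\star}$ and splice the right exchange angle -- which involves $E_m$ only at its two extreme terms -- at each $E_m^{\star}$-summand, again by (D4), pushing all occurrences of $E_m$ to the outer terms $T_d$ and $T_0$ while leaving the middle terms in $\add\overline{T}$.

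I expect the genuine obstacle to be the construction of the second paragraph: producing a single $(d+2)$-angle in which all $d$ intermediate terms simultaneously lie in $\add\overline{T}$ and whose far end is one indecomposable. For $d=1$ this is automatic from the cone of a single approximation map, but for $d>1$ there are $d$ middle objects to control at once, and it is precisely the no-loop hypothesis -- through the factorization of radical self-maps of $E_m$ -- together with the weak uniqueness of Lemma~\ref{lem.d-cone_unique} that both cleans the middle and forces the cosyzygy to be indecomposable. The $2$-Calabi-Yau duality and the octahedral bookkeeping in (D4) are then comparatively routine, if notation-heavy.
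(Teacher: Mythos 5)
There is a genuine gap, and it sits exactly where you predicted the ``real work'' would be: your second paragraph's construction is impossible in general. You claim that from the cluster tilting property of \(T\) and the no-loop hypothesis \emph{alone} you can produce a \((d+2)\)-angle \(\sus^{-1}E_m\to Y\to\overline{T}_d\to\cdots\to\overline{T}_1\xrightarrow{f}E_m\) with all middle terms in \(\add\overline{T}\). If that were possible, condition (3) would hold unconditionally, i.e.\ every loop-free summand of every cluster tilting object would be mutable --- but the theorem is only an equivalence of conditions on a \emph{given} candidate \(E_m^{\star}\), precisely because mutability genuinely fails in higher dimensions (see the remark in Definition~\ref{def.mutation}, and the fact that Theorem~\ref{thm.mutability_for_clustercat} needs both condition (5) as a hypothesis and the special setting of cluster categories of triangular \(d\)-representation finite algebras to produce \(E_m^{\star}\); Williams' combinatorial mutability criterion would be vacuous otherwise). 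Concretely, the failure is this: axiom (D1) lets you complete \(f\colon\overline{T}_1\to E_m\) to a \((d+2)\)-angle, but gives no control whatsoever over the intermediate terms, which need not lie in \(\add T\) at all. The no-loop hypothesis only lets you factor radical endomorphisms of \(E_m\) through \(\add\overline{T}\), so it can at best strip \(E_m\)-summands out of the middle; it cannot force arbitrary indecomposables outside \(\add T\) to disappear. (Your construction is essentially the paper's proof of Theorem~\ref{thm.mutability_for_clustercat}, where the middle terms are tamed by invoking condition (5) and the directedness of the ambient cluster category via Proposition~\ref{prop.C_no_loops} --- inputs you do not have here.)

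Because the candidate \(Y\) does not exist, the downstream steps collapse as well: your \((2)\Rightarrow(3)\) reduces to ``rigid complements of \(\overline{T}\) other than \(E_m\) are unique,'' which in the higher setting is not a standard fact you can quote --- it is part of what this theorem is proving, and the paper only obtains it \emph{after} establishing (3)/(4), via Lemma~\ref{lem.d-cone_unique}. The paper's actual route for \((2)\Rightarrow(3)\) avoids your problem entirely by resolving the \emph{given} object: take a \((d+2)\)-angle \(E_m^{\star}\to T_d\to\cdots\to T_0\to\sus E_m^{\star}\) with \(T_i\in\add T\) (this exists because \(T\) is cluster tilting, so the middle terms lie in \(\add T\) for free), choose the maps radical, and then use rigidity of \(\overline{T}\oplus E_m^{\star}\) to force \(T_0=(E_m)^n\), a Nakayama-type argument to kill \(E_m\)-summands in the middle terms and force \(n=1\), and finally exactness of \(\Hom(\overline{T},-)\) to see the last map is a right \(\overline{T}\)-approximation. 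Your rigidity chase for \((3)\Rightarrow(2)\), the Calabi-Yau duality for \((3)\Leftrightarrow(4)\), and the (D4) splicing for (5) and the resolving property are all in the same spirit as the paper's arguments and could likely be salvaged; but the proof needs to be reorganized so that every construction starts from the given \(E_m^{\star}\), rather than from a free-standing approximation of \(E_m\).
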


The theorem improves for the case of cluster categories of (certain) \(d\)-re\-pre\-sen\-ta\-tion finite algebras. In that case, we need not worry about the existence of loops, and all the above conditions become equivalent:

\begin{theorem} \label{thm.mutability_for_clustercat}
  Let \( \Lambda \) be a triangular \( d \)-representation finite algebra. Let \(T \in \mathscr{C}_{\Lambda} \) be cluster tilting in the \( (d+2) \)-angulated cluster category of \( \Lambda \), and \( E_m \) an indecomposable summand of \( T \). If Condition~(5) of Theorem~\ref{thm.find_replacement} holds, then there is $E_m^*\not\iso E_m$ indecomposable satisfying (1) to (4) in Theorem~\ref{thm.find_replacement}.
\end{theorem}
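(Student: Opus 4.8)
The plan is to reduce everything to Theorem~\ref{thm.find_replacement}. The key point is that in the cluster category of a triangular $d$-representation finite algebra the standing hypothesis of that theorem is automatic: by Proposition~\ref{prop.C_no_loops} every non-zero endomorphism of the indecomposable $E_m$ is an isomorphism, so $\End_{\mathscr{C}}(E_m)$ is a division ring with vanishing radical, and hence there is no loop at $E_m$ in the quiver of $T$. Theorem~\ref{thm.find_replacement} therefore applies verbatim, so its conditions (1)--(4) are equivalent and, as recorded there, any object fitting into a right exchange $(d+2)$-angle is automatically indecomposable. It thus suffices to construct a single right exchange $(d+2)$-angle; the remaining assertions (that $E_m^{\star}$ is indecomposable and satisfies (1), (2), (4)) are then immediate.

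To produce the candidate I would begin with a minimal right $\add\overline{T}$-approximation $f\colon \overline{T}_1\to E_m$, which exists because $\mathscr{C}$ has finite-dimensional morphism spaces. Using (D1) and the rotation axiom (D2), complete $f$ to a $(d+2)$-angle
\[ \sus^{-1}E_m \to E_m^{\star} \to D_d \to \cdots \to D_2 \to \overline{T}_1 \xrightarrow{\,f\,} E_m, \]
and, after splitting off contractible summands, arrange that all its morphisms are radical; the weak uniqueness of Lemma~\ref{lem.d-cone_unique} guarantees that this minimal completion is well defined up to isomorphism. Here $E_m^{\star}$ is the term immediately after $\sus^{-1}E_m$ and $D_2,\dots,D_d$ are the intermediate terms. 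That $E_m^{\star}\not\iso E_m$ is then easy: were it isomorphic to $E_m$, rotating the angle would exhibit $E_m$ as a summand of a later term of a radical $(d+2)$-angle beginning at $E_m$, contradicting the second part of Proposition~\ref{prop.C_no_loops}.

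The essential step -- and the only place the hypothesis, condition (5), enters -- is to show that the intermediate terms $D_2,\dots,D_d$ lie in $\add\overline{T}$, so that the angle above is genuinely a right exchange $(d+2)$-angle. Here I would feed the object $E_m^{\star}$ into condition (5) to obtain a resolution of $E_m^{\star}$ through $\add T$ whose middle terms already lie in $\add\overline{T}$, and then splice this resolution against the $(d+2)$-angle above by an octahedral-type comparison built from (D4), comparing the two completions by means of Lemma~\ref{lem.d-cone_unique}. The approximation property of $f$ ensures that no spurious copy of $E_m$ is introduced in the middle, and lifting the whole configuration to the directed $(d+2)$-angulated derived category $\mathscr{D}$ (Proposition~\ref{prop.D_directed}) should pin the remaining intermediate terms down to summands of $\overline{T}$, since in $\mathscr{D}$ there are no cycles of radical maps that would allow any other indecomposable to appear.

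I expect this last bookkeeping to be the main obstacle. Condition (5) guarantees that \emph{every} object is resolved with middle terms in $\add\overline{T}$, but transferring this global property to the single, freely constructed cone of $f$ -- ruling out that some $D_i$ acquires a summand outside $\add\overline{T}$ -- is delicate, and will require combining the directedness of $\mathscr{D}$ with the weak uniqueness of $(d+2)$-angles rather carefully.
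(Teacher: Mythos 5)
Your reduction coincides with the paper's: by Proposition~\ref{prop.C_no_loops} there is no loop at \( E_m \), so Theorem~\ref{thm.find_replacement} applies and it suffices to show that condition (5) produces a right exchange \( (d+2) \)-angle, i.e.\ (5) \( \Rightarrow \) (3). Your construction of the candidate (complete a minimal right \( \overline{T} \)-approximation \( f \colon \overline{T}_1 \to E_m \) to a \( (d+2) \)-angle with radical maps) and your use of Proposition~\ref{prop.C_no_loops} to keep \( E_m \) out of the other terms also match the paper. But the heart of the proof --- showing that the intermediate terms \( D_2, \ldots, D_d \) of this cone lie in \( \add T \) --- is exactly the step you leave as a sketch, and the sketch does not work as stated. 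Lemma~\ref{lem.d-cone_unique} compares two radical completions of the \emph{same} morphism; the resolving \( (d+2) \)-angle of \( E_m^{\star} \) obtained from (5) and your cone of \( f \) are completions of different morphisms (one angle ends at \( E_m^{\star} \), the other at \( E_m \)), so the lemma gives you nothing to ``splice''. Likewise, directedness of \( \mathscr{D} \) cannot by itself force middle terms into \( \add T \): it forbids cycles, but an indecomposable outside \( \add T \) sitting in a middle term creates no cycle. In short, feeding \( E_m^{\star} \) into (5) is feeding in the wrong object, and no mechanism is offered that transfers information from that resolution to the cone of \( f \).

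What the paper does at this point is a proof by contradiction in which (5) is applied to a hypothetical bad summand. Suppose some middle term \( D_i \) (\( 2 \leq i \leq d \)) has an indecomposable summand \( K \notin \add T \), with \( i \) minimal. The resolving \( (d+2) \)-angle \( K \to T_d \to \cdots \to T_1 \to T_0 \to \sus K \) provided by the cluster tilting property can, by (5), be chosen so that \( E_m \) is not a summand of \( T_1, \ldots, T_{d-1} \). One then builds a morphism of \( (d+2) \)-angles from (a rotation of) this angle to the cone of \( f \): the split monomorphism \( K \to D_i \), the map \( T_d \to D_{i-1} \) (which exists because \( K \to T_d \) is a left \( T \)-approximation and \( D_{i-1} \in \add T \) by minimality of \( i \)), and the remaining vertical maps from (D3). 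The last vertical map goes from \( T_{d-i+1} \in \add \overline{T} \) to \( E_m \), hence factors through the right \( \overline{T} \)-approximation \( f \); propagating this homotopy leftwards along the diagram writes the split monomorphism \( K \to D_i \) as a sum of composites of radical morphisms --- a contradiction. This factorization-and-homotopy argument is precisely the content missing from your proposal; without it (or a genuine substitute) the proof is incomplete.
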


Theorem \ref{thm.mutability_for_clustercat} gives a criterion for the summand $E_m$ of $T$ to be mutable: in the setting of the theorem, $E_m$ is mutable if and only if $E_m$ only ever appears in the first or last term of a $T$-resolving $(d+2)$-angle  of any $K\in \mathscr{C}_{\Lambda}$.

In the setting of \cite{OT}, which, combinatorially speaking, is the setting where cluster tilting objects correspond to triangulations of even-dimensional cyclic polytopes, Williams \cite[Theorem 4.10]{W} has given a combinatorial criterion for mutability of summands of a cluster tilting object. It would be interesting to relate the two criteria.

The remainder of this section is devoted to the proofs of Theorems \ref{thm.find_replacement} and \ref{thm.mutability_for_clustercat}.

\begin{proof}[{Proof of Theorem~\ref{thm.find_replacement}}] \ 

\paragraph{(2) \( \Longrightarrow \) (3)}

Let
\[ \begin{tikzcd}[sep=small,cramped] E_m^{\star} \ar[r] & T_d \ar[r] & \cdots \ar[r] & T_1 \ar[r] & T_0 \ar[r] & \sus E_m^{\star} \end{tikzcd} \]
be a $(d+2)$-angle with $T_i \in \add T$ (this exists since $T$ is cluster tilting). We may choose this in such a way that all the maps $T_{i+1} \to T_i$ lie in the radical of $\mathscr{C}$, by splitting off all isomorphisms between the \(T_i\).

Since $\Hom_{\mathscr{C}}(\overline{T}, \sus E_m^{\star}) = 0$ we have $T_0 = (E_m)^n$ for some $n$. If \( T_0 \) was zero then the map \( E_m^{\star} \to T_d \) would be split mono, contradicting \( E_m^{\star} \not \in \add T \). Thus \( n \geq 1 \).

For $i > 0$ we decompose $T_i = \overline{T}_i \oplus (E_m)^{n_i}$, with $\overline{T}_i \in \add \overline{T}$. Applying $\Hom_{\mathscr{C}}(-, \sus E_m^{\star})$ to the $(d+2)$-angle above, we obtain the exact sequence
\begin{align*}
& \begin{tikzcd}[sep=small,cramped,ampersand replacement=\&] \End_{\mathscr{C}}(\sus E_m^{\star}) \ar[r] \& \Hom_{\mathscr{C}}(E_m, \sus E_m^{\star})^n \ar[r] \& \Hom_{\mathscr{C}}(E_m, \sus E_m^{\star})^{n_1} \ar[r] \& \cdots \end{tikzcd} \\
& \qquad  \begin{tikzcd}[sep=small,cramped,ampersand replacement=\&] \cdots \ar[r] \& \Hom_{\mathscr{C}}(E_m, \sus E_m^{\star})^{n_{d-1}} \ar[r] \& \Hom_{\mathscr{C}}(E_m, \sus E_m^{\star})^{n_d} \ar[r] \& 0 \end{tikzcd}.
\end{align*}
Since all maps (except for the first one) are induced by radical endomorphisms of $E_m$, we observe that the images lie in \( \operatorname{Rad} \End(E_m) \cdot \Hom_{\mathscr{C}}(E_m, \Sigma E_m^{\star}) \). Hence, by the Nakayma lemma, we can see iteratively from right to left that \( n_d = \cdots = n_1 = 0 \). Now, since $\Hom_{\mathscr{C}}(E_m, \sus E_m^{\star})^n$ is the epimorphic image of \( \End_{\mathscr{C}}(\sus E_m^{\star}) \) as a module over that endomorphism ring, and this endomorphism ring is basic by assumption, we have $n \leq 1$. Together with the inequality from the start of the proof this means \( T_0 = E_m \).

Finally, applying $\Hom_{\mathscr{C}}(\overline{T}, -)$ to the \( (d+2) \)-angle above, we obtain the exact sequence
\[ \begin{tikzcd}[sep=small,cramped] \Hom_{\mathscr{C}}(\overline{T}, T_1) \ar[r] & \Hom_{\mathscr{C}}(\overline{T}, T_0) \ar[r] & 0 \end{tikzcd} .\]
So the map $T_1 \to T_0 = E_m$ is a right $\overline{T}$-approximation.

\medskip
\paragraph{(3) \( \Longrightarrow \) (2)}

Applying $\Hom_{\mathscr{C}}(\overline{T}, -)$ to the right exchange $(d+2)$-angle, we obtain an exact sequence
\[ \begin{tikzcd}[sep=small,cramped] \Hom_{\mathscr{C}}(\overline{T}, \overline{T}_1) \ar[r,"f_*"] & \Hom_{\mathscr{C}}(\overline{T}, E_m) \ar[r] & \Hom_{\mathscr{C}}(\overline{T}, \sus E_m^{\star}) \ar[r] & \Hom_{\mathscr{C}}(\overline{T}, \sus \overline{T}_d) \end{tikzcd}. \]
Since $f$ is a right $\overline{T}$ approximation the map $f_*$ is surjective. Since $T$ is cluster tilting the rightmost term above vanishes. So we see that $\Hom_{\mathscr{C}}(\overline{T}, \sus E_m^{\star}) = 0$.

By the 2-Calabi-Yau property this also means that
\( \Hom_{\mathscr{C}}(E_m^{\star}, \sus \overline{T})  = \D \Hom_{\mathscr{C}}(\sus \overline{T}, \sus^2 E_m^{\star}) = 0 \).

Using the right exchange $(d+2)$-angle again, in each argument, we obtain the following commutative diagram, where the three term row and the three term column are exact.
\[ \begin{tikzcd}
\Hom_{\mathscr{C}}(\overline{T}_d, E_m) \ar[r] \ar[d] & |[draw=black,dashed]| \Hom_{\mathscr{C}}(\overline{T}_d, \sus E_m^{\star}) \ar[d] \\
\Hom_{\mathscr{C}}(E_m^{\star}, E_m) \ar[r] \ar[d] & \Hom_{\mathscr{C}}(E_m^{\star}, \sus E_m^{\star}) \ar[r] & |[draw=black,dashed]| \Hom_{\mathscr{C}}(E_m^{\star}, \sus \overline{T}_d) \\
|[draw=black,dashed]| \Hom_{\mathscr{C}}(\sus^{-1} E_m, E_m)
\end{tikzcd} \]
Note that the spaces in the dashed boxes are zero by what we have seen above and the assumption that $T$ is cluster tilting.

It follows that the composition \( \Hom_{\mathscr{C}}(\overline{T}_d, E_m) \to \Hom_{\mathscr{C}}(E_m^{\star}, \sus E_m^{\star}) \) is both surjective and zero, and therefore \( \Hom_{\mathscr{C}}(E_m^{\star}, \sus E_m^{\star}) = 0 \).

\medskip
\paragraph{(2) \( \Longleftrightarrow \) (4)} This equivalence is dual to the equivalence of (2) and (3) which we have proven above.

\medskip
\paragraph{(3) \( \Longrightarrow \) (5)}

Let \( K \in \mathscr{C} \), and let
\[ \begin{tikzcd}[sep=small,cramped] \sus K \ar[r] & T_d \ar[r] & \cdots \ar[r] & T_0 \ar[r] & K \end{tikzcd} \]
be a resolving \( (d+2) \)-angle with \( T_i \in \add T \) and such that all the maps \( T_i \to T_{i-1} \) are radical morphisms. 
 
Assume that $E_m$ is a direct summand of $T_i$ for some $i \in \{1, \ldots, d-1\}$. Then, rotating the right exchange $(d+2)$-angle, we have two $(d+2)$-angles as in the following diagram.
\[ \begin{tikzcd}
\cdots \ar[r] & T_{i+1} \ar[r] \ar[d,dashed] & T_i \ar[r] \ar[d,"\substack{\text{split}\\ \text{epi}}"'] & T_{i-1} \ar[r] \ar[d,dashed] \ar[ld,dotted] & \cdots \\
\cdots \ar[r] & \overline{T}_1 \ar[r] & E_m \ar[r] & \sus E_m^{\star} \ar[r] & \sus \overline{T}_d \ar[r] & \cdots
\end{tikzcd} \]
The map $\overline{T}_1 \to E_m$ is a right $\overline{T}$-approximation of $E_m$. Since $E_m$ does not have any loops attached to it, this map is also a radical $T$-approximation of $E_m$. Thus the composition $T_{i+1} \to T_i \to E_m$ factors through $\overline{T}_1$ as indicated above by the left dashed arrow. We then obtain a morphism of $(d+2)$-angles, and in particular the right dashed arrow.

Now the map $E_m \to \sus E_m^{\star}$ is a right $T$-approximation (since \( \Hom_{\mathscr{C}}(T, \Sigma \overline{T}_d) = 0 \)), hence the right dashed map factors through it as indicated by the dotted arrow above. Hence we have that
\begin{align*} \left[ E_m \to \sus E_m^{\star} \right] & = \left[ E_m \to \sus E_m^{\star} \right] \circ \left[ T_i \to \sus E_m \right] \circ \overbrace{\left[ E_m \to T_i \right]}^{\text{split mono}} \\
& = \left[ T_{i-1} \to \sus E_m^{\star} \right] \circ \left[ T_i \to T_{i-1} \right] \circ \left[ E_m \to T_i \right] \\
& = \left[ E_m \to \sus E_m^{\star} \right] \circ \underbrace{\left[ T_{i-1} \to E_m \right] \circ \left[ T_i \to T_{i-1} \right] \circ \left[ E_m \to T_i \right]}_{\in \operatorname{Rad} \End_{\mathscr{C}}(E_m)}
\end{align*}
and so the map \( E_m \to \sus E_m^{\star} \) vanishes. This however means that the map \( \overline{T}_1 \to E_m \) is a split epimorphism, which cannot happen because \(E_m \not\in \add \overline{T} \).

\medskip
\paragraph{(1) \(\Longrightarrow\) (2)} Immediate.

\medskip
\paragraph{(2) \( \Longrightarrow \) (1)}

By the implications we already know, we may assume additionally that (3), (4) and (5) hold. That is, we have a right and a left exchange triangle connecting \(E_m\) and \(E_m^{\star}\) and we know that any \( K \) has a resolving \((d+2)\)-angle without \( E_m \) appearing in the middle terms.

Since rigidity is assumed in (2), we only need to show that $\overline{T} \oplus E_m^{\star}$ has the resolving property. So let $K \in \mathscr{C}$. Then there is a $(d+2)$-angle
\[ \begin{tikzcd}[sep=small,cramped] \sus^{-1} K \ar[r] & T_d \ar[r] & T_{d-1} \ar[r] & \cdots \ar[r] & T_1 \ar[r] & T_0 \ar[r] & K \end{tikzcd} \]
with $T_i \in \add T$, and we may assume by (5) that \( T_i \in \add \overline{T} \) for \( i \in \{1, \ldots, d-1 \} \). We write \( T_0 = \tilde{T}_0 \oplus (E_m)^a \) and \( T_d = \tilde{T}_d \oplus (E_m)^b \), with \( \tilde{T}_0 \) and \( \tilde{T}_d \in \add \overline{T} \). First we consider the following diagram, where the top row is a sum of copies of the right exchange \((d+2)\)-angle.
\[ \begin{tikzcd}[column sep=small]
(E_m^{\star})^a \ar[r] & (\overline{T}_d)^a \ar[r] & (\overline{T}_{d-1})^a \ar[r] & \cdots \ar[r] & (\overline{T}_1)^a \ar[r,"(f)^a"] & (E_m)^a \ar[r] & (\sus E_m^{\star})^a \\
\sus^{-1} K \ar[r] & T_d \ar[r] & T_{d-1} \ar[r] & \cdots \ar[r] & T_1 \ar[r] \ar[u,dashed] & \tilde{T}_0 \oplus (E_m)^a \ar[r] \ar[u,"(0 \; 1)"] & K
\end{tikzcd} \]
Since $f$ is a right $\overline{T}$-approximation, and \(T_1 \in \add \overline{T} \) we get the factorization indicated by the dashed arrow.

We may complete these vertical morphisms to a good morphism of $(d+2)$-angles, and consider the cone $(d+2)$-angle. After splitting off the identity on \( (E_m)^a \) this looks like the upper row in the following diagram.
\[ \tikz{ \node [scale=.9]{
\begin{tikzcd}[column sep=tiny,cramped]
\sus^{-1} K \ar[r] & \tilde{T}_d \oplus (E_m)^b \oplus (E_m^{\star})^a \ar[r] & T_{d-1} \oplus (\overline{T}_d)^a \ar[r] & \cdots \ar[r] & T_1 \oplus (\overline{T}_2)^a \ar[r] & \tilde{T}_0 \oplus (\overline{T}_1)^a \ar[r] & K \\
(\sus^{-1} E_m^{\star})^b \ar[r] & (E_m)^b \ar[r] \ar[u,"(0 \; 1 \; 0)"] & (\overline{T}^1)^b \ar[r] \ar[u,dashed] & \cdots \ar[r] & (\overline{T}^{d-1})^b \ar[r] & (\overline{T}^d)^b \ar[r] & (E_m^{\star})^b 
\end{tikzcd}
}; } \]
The lower row consists of \( b \) copies of the left exchange \( (d+2) \)-angle. Dual to the first step, we observe that since \( g \) is a left \( \overline{T} \)-approximation and \( T_{d-1} \oplus (\overline{T}_d)^a \in \add \overline{T} \) we can find the dashed arrow, and hence a good morphism of \( (d+2) \)-angles. Again we take the cone and split off the identity on \( (E_m)^b \), obtaining a \( (d+2) \)-angle resolving \( K \) by terms in \( \add \overline{T} \oplus E_m^{\star} \).
\end{proof}

\begin{proof}[{Proof of Theorem~\ref{thm.mutability_for_clustercat}}]
We first observe that, by Proposition~\ref{prop.C_no_loops}, \( E_m \) has no loops in the quiver of \( T \). Therefore Theorem~\ref{thm.find_replacement} applies. Thus it suffices to show that (5) implies (3).

Let \( f \) be a minimal right \( \overline{T} \)-approximation of \(E_m\), and complete it to a \( (d+2) \)-angle
\[ \begin{tikzcd}[sep=small] \sus^{-1} E_m \ar[r] & H_{d+1} \ar[r] & H_d \ar[r] & \cdots \ar[r] & H_2 \ar[r] & \overline{T}_1 \ar[r,"f"] & E_m, \end{tikzcd} \]
in which all maps lie in the radical of $\mathscr{C}$.

First assume that $H_i \not\in \add T$ for some $i \in \{2, \ldots, d\}$. Let $i$ be minimal with this property, and let $K$ be an indecomposable direct summand of $H_i$ which does not lie in $\add T$. Since $T$ is cluster tilting there is a $(d+2)$-angle
\[ \begin{tikzcd}[sep=small,cramped] K \ar[r] & T_d \ar[r] & \cdots \ar[r] & T_1 \ar[r] & T_0 \ar[r] & \sus K \end{tikzcd} \]
with $T_i \in \add T$, and by (5) we may choose it such that $E_m$ is not a direct summand of $T_j$ for $j \in \{1, \ldots, d-1\}$. We obtain the following diagram, where the rows are (parts of) rotations of these $(d+2)$-angles.
\[ \begin{tikzcd}
H_{i+1} \ar[r] & H_i \ar[r] & H_{i-1} \ar[r] & \cdots \ar[r] & H_2 \ar[r] & \overline{T}_1 \ar[r,"f"] & E_m \\
\sus^{-1} T_0 \ar[r] \ar[u,dashed] & K \ar[r] \ar[u] \ar[lu,dotted] & T_d \ar[r] \ar[u] \ar[lu,dotted] & \cdots \ar[r] \ar[lu,dotted] & T_{d-i+3} \ar[r] \ar[u,dashed] \ar[lu,dotted] & T_{d-i+2} \ar[r] \ar[u,dashed] \ar[lu,dotted] & T_{d-i+1} \ar[u,dashed] \ar[lu,dotted]
\end{tikzcd} \]
Here, the left solid vertical map is a split monomorphism (which exists by assumption). The right vertical solid map exists since the map $K \to T_d$ is a left $T$-approximation, and $H_{i-1} \in \add T$ by the minimality of $i$. (In the extreme case that \( i = 2 \) we use \( \overline{T}_1 \) instead of \( H_{i-1} \), but the argument remains the same.) Then we get a morphism of $(d+2)$-angles as indicated by the dashed vertical maps. Note that since $i \in \{2, \ldots, d\}$ we have $d-i+1 \in \{1, \ldots, d-1\}$, so $T_{d-i+1} \in \add \overline{T}$. Hence, since $f$ is a right $\overline{T}$-approximation, the rightmost dashed vertical map factors through $f$, as indicated by the rightmost dotted arrow. Now we propagate this homotopy to the left, obtaining the other dotted arrows. Looking at the position of $K$ we see that the split monomorphism $K \to H_i$ can now be written as the sum of the compositions $K \to H_{i+1} \to H_i$ and $K \to T_d \to H_i$, both of which lie in the radical of $\mathscr{C}$. This is a contradiction. Hence we have shown that for $i \in \{2, \ldots, d\}$ we have $H_i \in \add T$.

Finally note that by Proposition~\ref{prop.C_no_loops} we may choose the \( (d+2) \)-angle such that no other term has summands \( E_m \). It follows that \( H_2 \) up to \( H_d \) are in \( \add \overline{T} \), and thus that we have found a right mutation \( (d+2) \)-angle.
\end{proof}

\section{The index} \label{sect.index}

\begin{definition}
For an additive category \( \mathscr{A} \) we denote by \( \Gr(\mathscr{A}) \) the split Grothendieck group of \( \mathscr{A} \), that is the free abelian group on the objects of \( \mathscr{A} \) subject to the identifications \( [A_1 \oplus A_2] = [A_1] + [A_2] \). If the Krull-Schmidt theorem holds for \( \mathscr{A} \) then this is just the free abelian group on the indecomposables in \( \mathscr{A} \).

By abuse of notation, we write \( \Gr (A) \) instead of \( \Gr(\add A) \) for an object \(A \in \mathscr{A}\).
\end{definition}

\begin{definition}
Let \( T \) be a cluster tilting object in a \( 2 \)-Calabi-Yau \( (d+2) \)-angulated category \( \mathscr{C} \). Then, for any object \( K \in \mathscr{C} \) the \emph{index} of \( K \) with respect to \( T \) is
\[ \index_{T}(K) = \sum_{i=0}^d (-1)^i [T_i] \in \Gr(T), \]
provided that the resolution of \( K \) in \( \add T \) is
\[ \Sigma^{-1} K \to T_d \to \cdots \to T_0 \to K. \]
\end{definition}

\begin{remark}
Note that splitting off any isomorphisms between summands of two adjacent \( T_i \) does not affect the index, since this summand will appear twice with opposite signs. Since any resolution can be turned into one only having radical morphisms between the \( T_i \) by splitting off summands, and since a resolution only containing radical morphisms is unique by (a version of) Lemma~\ref{lem.d-cone_unique}, we see that the index is well-defined. 
\end{remark} 

This definition of index was given by Jørgensen \cite{J1}, and also plays an important role in \cite{R1, R2, R3, JS1}.

Suppose that the cluster tilting object $T^*$ is obtained from
  the cluster tilting object $T$ by a single mutation. Our goal in this section is to explain the relationship between $\index_T(K)$ and $\index_{T^*}(K)$.

The paper \cite{J1} is also concerned with relations among different indices, but with the important distinction that Jørgensen is relating indices of different modules with respect to the same cluster tilting object, whereas we are relating indices of the same object with respect to different cluster tilting objects. These two approaches produce quite different dynamics, as is already visible in the cluster algebraic setting ($d=1$).

Our ultimate goal for this section is to show that (for odd \( d \)) the index satisfies higher tropical coefficient dynamics. Let us start by making precise what that means.

\begin{definition} \label{def.higher_trop_comb}
Let $d$ be an odd positive integer, and let \( \mathscr{C} \) be a \( 2 \)-Calabi-Yau \( (d+2) \)-angulated category. Assume all cluster tilting objects in \( \mathscr{C} \) have exactly \( n \) summands.
 
A function
\[ f \colon \{ \text{(ordered) cluster tilting objects in \( \mathscr{C} \)} \} \mapsto  \mathbb{Z}^n \]
satisfies \emph{higher tropical coefficient dynamics} if the following mutation formula holds:

For \( T \) and \( T^{\star} \) two cluster tilting objects related by a single mutation in \( E_m \) as in Definition~\ref{def.mutation}, using the notation of Theorem~\ref{thm.find_replacement} we have
\[ f(T^{\star})_i = \begin{cases}
- f(T)_i & \text{ if } i = m \\
f(T)_i - \sum_{j=1}^d (-1)^j \mult{E_i}{\overline{T}_j} f(T)_m & \text{ if } f(T)_m \geq 0 \\
f(T)_i - \sum_{j=1}^d (-1)^j \mult{E_i}{\overline{T}^j} f(T)_m & \text{ if } f(T)_m \leq 0,
\end{cases} \]
generalizing the formula for \( \mu^T_m \) of Section~\ref{2-2}.
\end{definition}

\begin{remark} \label{rem.higher_dynamics}
Note that the alternating sums in the mutation formula are very close to the ones appearing in the definition of the index of \( \Sigma E_m^{\star} \) and \( E_m^{\star} \), respectively. Thus the formula may also be written as
\[ f(T^{\star})_i = \begin{cases}
- f(T)_i & \text{ if } i = m \\
f(T)_i - \mult{E_i}{\index_T(\Sigma E_m^{\star})} f(T)_m & \text{ if } f(T)_m \geq 0 \\
f(T)_i + \mult{E_i}{\index_T(E_m^{\star})} f(T)_m & \text{ if } f(T)_m \leq 0,
\end{cases} \]
\end{remark}

We start our investigation of the effect of mutation on indices with the following proposition, which summarizes the information we obtain relatively directly from Theorem~\ref{thm.find_replacement}.

\begin{proposition} \label{prop.g_mut_I}
Let $T$ be a cluster tilting object in $\mathscr{C}$, and let $E_m$ be a mutable summand of $T = \overline{T} \oplus E_m$ without loops attached to it. Let $T^{\star} = \overline{T} \oplus E_m^{\star}$ be the mutated cluster tilting object.

Let $K \in \mathscr{C}$ and let
\[ \begin{tikzcd}[sep=small,cramped] \sus^{-1} K \ar[r] & T_d \ar[r] & \cdots \ar[r] & T_0 \ar[r] & K \end{tikzcd} \]
be a $(d+2)$-angle with $T_i \in \add T$, and such that the maps between the $T_i$ lie in the radical of $\mathscr{C}$.

Then
\begin{align*}
\index_{T^{\star}}(K) = \index_T(K) & + \mult{E_m}{T_0} \cdot \left( (-1)^d [E_m^{\star}] - \index_T(\Sigma E_m^{\star}) \right) \\
& + \mult{E_m}{T_d} \cdot \left( [E_m^{\star}] - \index_T(E_m^{\star}) \right)
\end{align*}
(In particular this includes the claim that the term on the right hand side, which a priori is an element of \( \Gr (T^{\star} \oplus E_m) \), in fact lies in the subgroup \( \Gr(T^{\star}) \).)
\end{proposition}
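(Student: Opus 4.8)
The plan is to produce an explicit $T^{\star}$-resolution of $K$ by grafting the exchange $(d+2)$-angles onto the given $T$-resolution, and then to read off $\index_{T^{\star}}(K)$ as the alternating sum of its terms, comparing against $\index_T(K)=\sum_i (-1)^i[T_i]$. Since the given resolution has radical differentials, by the uniqueness of such resolutions (the version of Lemma~\ref{lem.d-cone_unique} used for well-definedness of the index) it is isomorphic to the one supplied by condition~(5) of Theorem~\ref{thm.find_replacement}, made radical by splitting off isomorphisms; splitting off only removes summands and leaves the middle terms in $\add\overline T$. Hence I may assume $T_{d-1},\dots,T_1\in\add\overline T$ and that $E_m$ occurs only in $T_0=\tilde T_0\oplus (E_m)^{\mult{E_m}{T_0}}$ and $T_d=\tilde T_d\oplus (E_m)^{\mult{E_m}{T_d}}$ with $\tilde T_0,\tilde T_d\in\add\overline T$.

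The core construction re-uses the proof of the implication (2)$\Rightarrow$(1) of Theorem~\ref{thm.find_replacement} verbatim. First I would graft $\mult{E_m}{T_0}$ copies of the (rotated) right exchange $(d+2)$-angle onto the bottom of the resolution, using that $f\colon\overline T_1\to E_m$ is a right $\overline T$-approximation to get the needed morphism of $(d+2)$-angles, and pass to the good cone; this removes the $E_m$'s in $T_0$ at the cost of introducing $E_m^{\star}$ at the top and distributing $\overline T$-terms through the middle. Then, dually, I would graft $\mult{E_m}{T_d}$ copies of the (rotated) left exchange $(d+2)$-angle at the top, using that $g$ is a left $\overline T$-approximation, and take the cone again. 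The result is exactly the $T^{\star}$-resolution built there, all of whose terms lie in $\add(\overline T\oplus E_m^{\star})$; its alternating sum therefore computes $\index_{T^{\star}}(K)$, and this also makes the final parenthetical claim automatic.

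It then remains to compare alternating sums, which I would do in the split Grothendieck group of $\add(\overline T\oplus E_m\oplus E_m^{\star})$, a group containing both $\Gr(T)$ and $\Gr(T^{\star})$. Tracking the first cone shows that it alters the index by $\mult{E_m}{T_0}$ times $\bigl((-1)^d[E_m^{\star}]-\sum_i(-1)^i[\overline T_i]-[E_m]\bigr)$, and the second cone by $\mult{E_m}{T_d}$ times the analogous expression in the left exchange terms $\overline T^i$. Finally I would identify these brackets with $(-1)^d[E_m^{\star}]-\index_T(\Sigma E_m^{\star})$ and $[E_m^{\star}]-\index_T(E_m^{\star})$ by rotating the two exchange $(d+2)$-angles into $T$-resolutions of $\Sigma E_m^{\star}$ and of $E_m^{\star}$, which yield $\index_T(\Sigma E_m^{\star})=[E_m]+\sum_i(-1)^i[\overline T_i]$ and $\index_T(E_m^{\star})=(-1)^d\bigl([E_m]+\sum_i(-1)^i[\overline T^i]\bigr)$; substituting these gives precisely the asserted formula.

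The main obstacle is bookkeeping rather than conceptual: keeping the rotation conventions, the re-indexing of resolution terms after each cone, and above all the $(-1)^d$ sign factors mutually consistent, so that the position-$0$ and position-$d$ contributions collapse cleanly onto $\index_T(\Sigma E_m^{\star})$ and $\index_T(E_m^{\star})$. A convenient consistency check is that the coefficient of $[E_m]$ in the claimed right-hand side cancels, reconfirming that the answer lands in $\Gr(T^{\star})$.
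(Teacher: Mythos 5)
Your proposal is correct and follows essentially the same route as the paper: both reduce to the explicit $T^{\star}$-resolution of $K$ built in the proof of (2)$\Rightarrow$(1) of Theorem~\ref{thm.find_replacement}, read off $\index_{T^{\star}}(K)$ as its alternating sum, and identify the exchange-angle contributions with $\index_T(\Sigma E_m^{\star})$ and $\index_T(E_m^{\star})$ via the rotated exchange $(d+2)$-angles (your sign bookkeeping, including $\index_T(E_m^{\star})=(-1)^d([E_m]+\sum_i(-1)^i[\overline{T}^i])$, checks out). Your opening reduction --- invoking Lemma~\ref{lem.d-cone_unique} and condition (5) to see that the given radical resolution automatically has middle terms in $\add\overline{T}$ --- is a point the paper leaves implicit, and making it explicit is a small improvement rather than a different argument.
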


\begin{proof}
This follows from the construction of a \( (d+2) \)-angle resolving \( K \) in the proof of Theorem~\ref{thm.find_replacement}. In that proof we wrote \( a = \mult{E_m}{T_0} \) and \( b = \mult{E_m}{T_d} \), and constructed a \( (d+2) \)-angle showing that (in the notation of that proof)
\begin{align*}
\index_{T^{\star}} (K) & = [\tilde{T}_0] + a [\overline{T}_1] + b [E_m^{\star}] + \sum_{i=1}^{d-1} (-1)^i \left( [T_i] + a [\overline{T}_{i+1}] + b [\overline{T}^{d+1-i}] \right) \\
& \qquad + (-1)^d \left( \tilde{T}_d + a [ E_m^{\star} ] + b [ \overline{T}^1 ] \right) \\
& = \index_T(K) - a [E_m] - (-1)^d b [E_m] \\
& \qquad - a \left( \index_T(\Sigma E_m^{\star}) - [E_m] - (-1)^d [E_m^{\star}] \right) \\
& \qquad - b \left( \index_T(E_m^{\star}) - (-1)^d [E_m] - [E_m^{\star}] \right) \\
& = \index_T(K) - a \left( \index_T(\Sigma E_m^{\star}) - (-1)^d [E_m^{\star}] \right) \\
& \qquad - b \left( \index_T(E_m^{\star}) - [E_m^{\star}] \right)
\end{align*}
as claimed.
\end{proof}

\begin{lemma} \label{lemma.rigid_only_one_end}
Let $K \in \mathscr{C}$ be rigid (i.e.\ \( \Hom_{\mathscr{C}}(K, \sus K) = 0 \)), and let
\[ \begin{tikzcd}[sep=small,cramped] \sus^{-1} K \ar[r] & T_d \ar[r] & T_{d-1} \ar[r] & \cdots \ar[r] & T_1 \ar[r] & T_0 \ar[r] & K \end{tikzcd} \]
be a resolving $(d+2)$-angle, such that $T_i \in \add T$, and all the maps $T_i \to T_{i-1}$ lie in the radical of $\mathscr{C}$.

Then $T_d$ and $T_0$ have no common direct summands.
\end{lemma}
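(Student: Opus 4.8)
The plan is to argue by contradiction. Suppose some indecomposable \( E \) is a common direct summand of both \( T_0 \) and \( T_d \), and fix the structure maps \( \imath\colon E\to T_d \), \( \rho\colon T_d\to E \) and \( \jmath\colon E\to T_0 \), \( \pi\colon T_0\to E \), so that \( \rho\imath=\mathrm{id}_E=\pi\jmath \). The whole argument is organised around producing the impossible conclusion \( \mathrm{id}_E\in\operatorname{Rad}\End_{\mathscr C}(E) \); since \( E\neq0 \) has local endomorphism ring this is a contradiction. Concretely, I will show that the split monomorphism \( \jmath\colon E\to T_0 \) is in fact a radical morphism, which already forces \( \mathrm{id}_E=\pi\jmath \) into the radical.

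The key is to spend the rigidity of \( K \) on one-sided factorisation properties at the two ends of the resolving \( (d+2) \)-angle \( \sus^{-1}K\xrightarrow{a}T_d\xrightarrow{g_d}\cdots\xrightarrow{g_1}T_0\xrightarrow{b}K \). First, for any \( \phi\colon T_d\to K \) the composite \( \phi a \) lies in \( \Hom_{\mathscr C}(\sus^{-1}K,K)\cong\Hom_{\mathscr C}(K,\sus K)=0 \), so \( \phi a=0 \); since \( g_d \) is a weak cokernel of \( a \), every such \( \phi \) factors through \( g_d \). Second, applying \( \Hom_{\mathscr C}(T,-) \) to the angle and using \( \Hom_{\mathscr C}(T,\sus T_d)=0 \) (as \( T \) is rigid and \( T_d\in\add T \)), one sees that \( b \) is a right \( \add T \)-approximation. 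I also use that \( g_1 \) is a weak kernel of \( b \), which is immediate from the basic properties of \( (d+2) \)-angles recalled earlier.

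With these in hand the computation is short. Consider \( \phi:=b\jmath\rho\colon T_d\to K \); by the first property \( \phi=\psi g_d \) for some \( \psi\colon T_{d-1}\to K \), and precomposing with \( \imath \) gives \( b\jmath=\psi g_d\imath \). As \( T_{d-1}\in\add T \), the approximation property yields \( \psi=b\eta \) with \( \eta\colon T_{d-1}\to T_0 \), whence \( b(\jmath-\eta g_d\imath)=0 \). The weak kernel property of \( g_1 \) then gives \( \jmath-\eta g_d\imath=g_1\xi \) for some \( \xi\colon E\to T_1 \), that is
\[ \jmath=\eta g_d\imath+g_1\xi. \]
Now \( g_d \) and \( g_1 \) are radical by hypothesis, so both \( \eta g_d\imath \) and \( g_1\xi \) lie in \( \operatorname{Rad}(E,T_0) \) because the radical is a two-sided ideal; hence \( \jmath\in\operatorname{Rad}(E,T_0) \) and \( \mathrm{id}_E=\pi\jmath\in\operatorname{Rad}\End_{\mathscr C}(E) \), the desired contradiction. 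Note that this works uniformly in \( d \) (for \( d=1 \) one has \( T_{d-1}=T_0 \)).

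The step I expect to be the crux is the very first one: recognising that rigidity of \( K \) should be used to factor morphisms \emph{out of} \( T_d \) (via \( \phi a=0 \) and the weak cokernel \( g_d \)), rather than attempting any direct comparison of the non-adjacent terms \( T_0 \) and \( T_d \). Once a morphism is pushed off the top copy of \( E \) into \( T_{d-1} \), the approximation property of \( b \) is exactly what transports it down to \( T_0 \), and the weak kernel property of \( g_1 \) closes the loop; the collapse of both contributions into the radical is then automatic. A minor point to verify is that the displayed identity needs only that \( g_d \) and \( g_1 \) are radical (part of the hypothesis) and does not require any minimality of the approximations.
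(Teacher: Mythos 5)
Your proof is correct and takes essentially the same route as the paper's: both arguments use rigidity of \( K \) to factor the composite \( T_d \to T_0 \to K \) through the weak cokernel \( g_d \), then rigidity of \( T \) (i.e.\ \( \Hom_{\mathscr{C}}(T_{d-1}, \sus T_d) = 0 \)) to pull the resulting map \( T_{d-1} \to K \) back through \( b \colon T_0 \to K \), and finally the weak kernel property of \( g_1 \) to write the original non-radical map as a sum of two compositions through radical morphisms. The only cosmetic difference is that you precompose with the section \( \imath \colon E \to T_d \) and derive the contradiction at the level of \( \mathrm{id}_E \), whereas the paper works directly with the non-radical map \( f = \jmath\rho \colon T_d \to T_0 \).
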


\begin{proof}
Assume $T_d$ and $T_0$ have a common direct summand. Then we have a map $f \colon T_d \to T_0$ which does not lie in the radical of $\mathscr{C}$. We consider the following diagram.
\[ \begin{tikzcd}
\sus^{-1} K \ar[r] & T_d \ar[r] \ar[d,"f"'] \ar[ld,dotted] & T_{d-1} \ar[d,dashed] \ar[ld, dotted] \\
T_1 \ar[r] & T_0 \ar[r] & K \ar[r] & \sus T_d 
\end{tikzcd} \]
Since the composition from $\sus^{-1} K$ to $K$ is $0$, we obtain the dashed arrow making the square commutative. Since $\Hom_{\mathscr{C}}(T_{d-1}, \sus T_d) = 0$ the dashed arrow factors through $T_0 \to K$ as indicated by the right dotted arrow. Now the compositions $T_d \overset{f}{\to} T_0 \to K$ and $T_d \to T_{d-1} \to T_0 \to K$ coincide, so the difference \( f - [T_d \to T_{d-1} \to T_0] \) factors through \( [T_1 \to T_0] \), as indicated by the left dotted arrow. However, both \( [T_d \to T_{d-1} ] \) and \( [T_1 \to T_0] \) are radical morphisms, contradicting the fact that \( f \) is not a radical morphism.
\end{proof}

\begin{corollary}\label{cor.main_thm_alld}
Let \(T\) be a cluster tilting object in \( \mathscr{C} \), and let $E_m$ be a mutable summand of $T = \overline{T} \oplus E_m$ without loops attached to it. Let $T^{\star} = \overline{T} \oplus E_m^{\star}$ be the mutated cluster tilting object.

Let \( K \in \mathscr{C} \), and let
\[ \begin{tikzcd}[sep=small,cramped] \sus^{-1} K \ar[r] & T_d \ar[r] & \cdots \ar[r] & T_0 \ar[r] & K \end{tikzcd} \]
be a resolving $(d+2)$-angle, with $T_i \in \add T$, and such that the maps between the $T_i$ lie in the radical of $\mathscr{C}$.
Then
\begin{align*}
\index_{T^{\star}}(K) & = \index_T(K) + (-1)^d \mult{E_m}{\index_T(K)} [E_m^{\star}] \\
& \qquad - | \mult{E_m}{\index_T(K)} | \cdot \begin{cases} \index_T(\Sigma E_m^{\star}) & \text{if } \mult{E_m}{T_d} = 0 \\  \index_T(E_m^{\star}) & \text{if } \mult{E_m}{T_0} = 0 \end{cases}
\end{align*}

In particular, if $K \in \mathscr{C}$ is rigid (\(\Hom_{\mathscr{C}}(K, \sus K) = 0\)), then by Lemma~\ref{lemma.rigid_only_one_end} (at least) one of the cases applies.
\end{corollary}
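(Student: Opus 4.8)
The plan is to deduce the Corollary directly from Proposition~\ref{prop.g_mut_I} and Lemma~\ref{lemma.rigid_only_one_end}, the only real work being to rewrite the two multiplicities $\mult{E_m}{T_0}$ and $\mult{E_m}{T_d}$ that appear in the Proposition in terms of the single well-defined integer $\mult{E_m}{\index_T(K)}$. The one structural input I need is that, for a resolution with radical connecting maps, $E_m$ never occurs among the \emph{middle} terms, so that the coefficient of $[E_m]$ in $\index_T(K)$ sees only the two ends.

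First I would record this input. Since $E_m$ is mutable and carries no loops, condition~(3) of Theorem~\ref{thm.find_replacement} holds, and the proof there of the implication (3)$\Longrightarrow$(5) in fact establishes that in \emph{any} resolving $(d+2)$-angle with radical connecting maps, $E_m$ cannot be a summand of $T_1,\dots,T_{d-1}$. Reading off the coefficient of $[E_m]$ in $\index_T(K)=\sum_{i=0}^d(-1)^i[T_i]$ then gives the identity
\[ \mult{E_m}{\index_T(K)} = \mult{E_m}{T_0} + (-1)^d\,\mult{E_m}{T_d}. \]
Because $\mult{E_m}{T_0}$ and $\mult{E_m}{T_d}$ are honest multiplicities, they are nonnegative, so as soon as one of them vanishes this identity determines the other together with its sign — which is exactly what the two-case shape of the Corollary records.

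Then I would substitute directly into the formula of Proposition~\ref{prop.g_mut_I}. If $\mult{E_m}{T_d}=0$, the displayed identity gives $\mult{E_m}{T_0}=\mult{E_m}{\index_T(K)}\ge 0$, hence $\mult{E_m}{T_0}=|\mult{E_m}{\index_T(K)}|$, and the Proposition collapses to the first case of the Corollary. Symmetrically, if $\mult{E_m}{T_0}=0$, the identity gives $\mult{E_m}{T_d}=(-1)^d\mult{E_m}{\index_T(K)}$, which equals $|\mult{E_m}{\index_T(K)}|$ since $\mult{E_m}{T_d}\ge 0$; these two equalities supply respectively the coefficient of $[E_m^\star]$ and the coefficient of $\index_T(E_m^\star)$ in the second case. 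Finally, when $K$ is rigid, Lemma~\ref{lemma.rigid_only_one_end} shows $T_0$ and $T_d$ share no summand, so at least one of $\mult{E_m}{T_0},\mult{E_m}{T_d}$ is zero and (at least) one case applies. I expect the only fiddly point to be the sign- and absolute-value bookkeeping in the two substitutions; the conceptual content lives entirely in the displayed identity, which is also why no parity hypothesis on $d$ enters.
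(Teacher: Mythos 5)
Your proof is correct and takes essentially the same approach as the paper: the paper likewise deduces the corollary directly from Proposition~\ref{prop.g_mut_I}, observing that if \( \mult{E_m}{T_d} = 0 \) then \( \mult{E_m}{T_0} = \mult{E_m}{\index_T(K)} \) (and similarly in the other case), which is precisely your displayed identity specialized to each case, with the rigid case handled by Lemma~\ref{lemma.rigid_only_one_end} exactly as you do. Your write-up merely makes explicit what the paper leaves implicit --- that the argument for (3)\( \Longrightarrow \)(5) in Theorem~\ref{thm.find_replacement} applies to \emph{any} resolving \( (d+2) \)-angle with radical connecting maps, so \( E_m \) cannot occur among \( T_1, \ldots, T_{d-1} \) --- and the accompanying sign bookkeeping, both of which are handled correctly.
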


\begin{proof}
This immediately follows from Proposition~\ref{prop.g_mut_I}. Note that if \(\mult{E_m}{T_d} = 0 \) then \( \mult{E_m}{T_0} = \mult{E_m}{\index_T(K)} \) and similarly in the other case.
\end{proof}

For odd $d$ we obtain the following result as an immediate consequence of the previous corollary.

\begin{corollary} \label{cor.main_thm}
Assume $d$ is odd. Let $T$ be a cluster tilting object in $\mathscr{C}$, and let $E_m$ be a mutable summand of $T = \overline{T} \oplus E_m$ without loops attached to it. Let $T^{\star} = \overline{T} \oplus E_m^{\star}$ be the mutated cluster tilting object.

Let $K \in \mathscr{C}$ be rigid (i.e.\ $\Hom_{\mathscr{C}}(K, \sus K) = 0$).

Then
\begin{align*}
\index_{T^{\star}}(K) & = \index_T(K) - \mult{E_m}{\index_T(K)} [E_m^{\star}] \\
& \qquad \begin{cases} - \mult{E_m}{\index_T(K)} \index_T(\Sigma E_m^{\star}) & \text{if } \mult{E_m}{\index_T(K)} \geq 0 \\  + \mult{E_m}{\index_T(K)} \index_T(E_m^{\star}) & \text{if } \mult{E_m}{\index_T(K)} \leq 0 \end{cases}
\end{align*}
\end{corollary}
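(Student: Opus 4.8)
The plan is to read the statement off directly from Corollary~\ref{cor.main_thm_alld}, specialising to odd $d$ and to rigid $K$, since the conceptual work is already done there. First I would fix a resolving $(d+2)$-angle
\[ \sus^{-1} K \to T_d \to \cdots \to T_0 \to K \]
whose middle terms $T_1, \dots, T_{d-1}$ all lie in $\add \overline{T}$; such a resolution exists by part~(5) of Theorem~\ref{thm.find_replacement}, and after splitting off isomorphisms between adjacent terms I may moreover assume that all maps between the $T_i$ are radical. Splitting off isomorphisms only removes summands, so the middle terms stay in $\add\overline{T}$. Applying Corollary~\ref{cor.main_thm_alld} to this resolution and substituting $(-1)^d = -1$ immediately turns its first line into $\index_T(K) - \mult{E_m}{\index_T(K)}\,[E_m^\star]$, matching the first line of the claim.

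For the case distinction I would argue as follows. Because the middle terms contain no copy of $E_m$, the coefficient of $[E_m]$ in the index is
\[ \mult{E_m}{\index_T(K)} = \mult{E_m}{T_0} - \mult{E_m}{T_d}, \]
using $(-1)^d = -1$ once more. Since $K$ is rigid, Lemma~\ref{lemma.rigid_only_one_end} tells me that $T_0$ and $T_d$ share no indecomposable summand, so at least one of the non-negative integers $\mult{E_m}{T_0}$, $\mult{E_m}{T_d}$ vanishes --- which is precisely the statement that one of the two cases of Corollary~\ref{cor.main_thm_alld} applies. If $\mult{E_m}{T_d} = 0$ then $\mult{E_m}{\index_T(K)} = \mult{E_m}{T_0} \geq 0$, so $\lvert\mult{E_m}{\index_T(K)}\rvert = \mult{E_m}{\index_T(K)}$ and the correction term $-\lvert\mult{E_m}{\index_T(K)}\rvert\,\index_T(\sus E_m^\star)$ becomes $-\mult{E_m}{\index_T(K)}\,\index_T(\sus E_m^\star)$, which is the $\geq 0$ branch of the claim. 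Symmetrically, if $\mult{E_m}{T_0} = 0$ then $\mult{E_m}{\index_T(K)} = -\mult{E_m}{T_d} \leq 0$, so $-\lvert\mult{E_m}{\index_T(K)}\rvert = \mult{E_m}{\index_T(K)}$ and the correction term $-\lvert\mult{E_m}{\index_T(K)}\rvert\,\index_T(E_m^\star)$ becomes $+\mult{E_m}{\index_T(K)}\,\index_T(E_m^\star)$, the $\leq 0$ branch.

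There is essentially no conceptual obstacle here: the content is packaged in Corollary~\ref{cor.main_thm_alld}, and what remains is sign bookkeeping made possible by the oddness of $d$, which forces $(-1)^d = -1$ and hence makes the two end terms contribute to $\mult{E_m}{\index_T(K)}$ with opposite signs. The only point requiring a moment's care is the overlap case $\mult{E_m}{T_0} = \mult{E_m}{T_d} = 0$: then $\mult{E_m}{\index_T(K)} = 0$, both branches kill the correction term, and the two formulas agree, so the piecewise description is consistent on the boundary between the $\geq 0$ and $\leq 0$ regimes.
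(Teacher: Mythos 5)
Your proposal is correct and takes essentially the same route as the paper: the paper states this corollary as an ``immediate consequence'' of Corollary~\ref{cor.main_thm_alld}, and your argument supplies exactly the intended bookkeeping --- choosing a radical resolution with middle terms in \( \add \overline{T} \) via Theorem~\ref{thm.find_replacement}(5), invoking Lemma~\ref{lemma.rigid_only_one_end} to kill one end multiplicity, and using oddness of \( d \) to identify \( \mult{E_m}{\index_T(K)} \) with \( \mult{E_m}{T_0} - \mult{E_m}{T_d} \), so the case distinction of Corollary~\ref{cor.main_thm_alld} can be read off from the sign of \( \mult{E_m}{\index_T(K)} \) alone. Your remark on the overlap case \( \mult{E_m}{T_0} = \mult{E_m}{T_d} = 0 \) is a correct (and worthwhile) consistency check that the paper leaves implicit.
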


Comparing to Remark~\ref{rem.higher_dynamics}, we see that Corollary~\ref{cor.main_thm} precisely says that the index obeys higher tropical coefficient dynamics.

The important advantage of Corollary \ref{cor.main_thm} over
Corollary \ref {cor.main_thm_alld} is that in Corollary~\ref{cor.main_thm}, in order to know which case of the formula to apply, we ask whether $[\index_T(K):E_m]$ is
non-negative or non-positive, which, obviously, depends only on
$\index_T(K)$. In contrast, when applying the Corollary \ref {cor.main_thm_alld}, we must know whether
$[T_d:E_m]$ is non-negative
or $[T_0:E_m]$ is non-negative, and when $d$ is even, the index will not tell us this, since either term would contribute positively to the index. 

In other words, Corollary \ref{cor.main_thm} tells us how to determine
$\index_{T^*}(K)$ knowing nothing about $K$ other than $\index_T(K)$.

\section{Higher shear coordinates}\label{laminations}

We now specialize to the situation studied in \cite{OT}. $A^d_n$ is the
$(d-1)$ higher Auslander algebra of linearly oriented $A_n$, and 
$\mathscr C^d_n$ is its $(d+2)$-angulated cluster category.

\subsection{Reminders about $\mathscr C^d_n$}\label{recall}

The following description of \( \mathscr{C}^d_n \) can be found in \cite{OT}, in particular in Section~6.
Define $\IndexC{n}$ to be the set of $d+1$-sets from
$\{0,1,\dots,n+2d\}$ with no two elements cyclically consecutive (i.e., if
$i\in I$, then $i+1\not\in I$, with addition taken modulo $n+2d+1$).

The indecomposable objects of $\mathscr C^d_n$ are indexed by $\IndexC{n}$.
For $(i_0,\dots,i_d)\in \IndexC{n}$, we write $O_{(i_0,\dots,i_d)}$ for the
corresponding indecomposable object of $\mathscr C^d_n$.

We say that \( I \) and \( J \) in \( \IndexC{n} \) \emph{intertwine} if the elements of $I$ and $J$ alternate cyclically; that is to say,
it is possible to list the elements of $I$ as $i_0<\dots<i_d$ and the elements of $J$ as $j_0<\dots<j_d$ such that 
either $i_0<j_0<i_1<j_1\dots<i_d<j_d$
or $j_0<i_0<j_1<i_1<\dots<j_d<i_d$. In this case we say that $I$ and $J$
intertwine. We say that two subsets $X$ and $Y$ of
$\IndexC{n}$ do not intertwine if no $I\in X$ and $J\in Y$ intertwine.

For $I,J\in \IndexC{n}$, we have that $\Hom(O_I,O_J[d])$ is one-dimensional if $I$ and $J$ intertwine, and zero-dimensional otherwise.

It follows that the indices of the summands of a cluster tilting object are necessarily non-intertwining. In fact, such collection of indices are exactly the non-intertwining sets of maximal size.

The action of the suspension has a nice interpretation in terms of this indexing. For $I\in\IndexC{n}$, 
$\Sigma O_{(i_0,\dots,i_d)}\simeq O_{(i_0-1,\dots,i_d-1)}$, unless $i_0=1$, in which case,
$\Sigma O_{(1,\dots,i_d)}\simeq O_{(i_1-1,\dots,i_d-1,n+2d+1)}$. 

The set $\IndexC{n}$ also has an interpretation in terms of convex geometry.
Choose real numbers
$v_0<\dots<v_{n+2d}$.
Consider the moment curve, defined by $p(t)=(t,t^2,\dots,t^{2d})$. Take the convex hull of the points $p(v_i)$; this is a cyclic polytope $P$ with vertices $p(v_i)$.
An internal $d$-simplex of $P$ is a simplex whose vertices are vertices of
$P$ and which does not lie entirely within the boundary of $P$. The internal
$d$-simplices of $P$ are the convex hulls of $p(i_0), \dots, p(i_d)$ for
$(i_0,\dots,i_d)\in \IndexC{n}$.

The cluster tilting objects in $\mathscr C^n_d$ are in bijective correspondence with triangulations of $P$ (i.e., subdivisions of $P$ into simplices, all of
whose vertices are vertices of $P$). Under this correspondence, a triangulation corresponds to the
direct sum of the indecomposable objects corresponding to the internal $d$-simplices appearing in the triangulation.

For $I\in \IndexC{n}$, if  $O_I$ is a mutable summand of $T$,
which mutates to $O_J$ for $J=(j_0,\dots,j_d)$, then the triangulation
corresponding to $T$ restricts to a triangulation of the convex hull of the
points $p(i_0),\dots,p(i_d),p(j_0),\dots,p(j_d)$. There are two triangulations
of a $2d$-dimensional cyclic polytope with $2d+2$ vertices, one with internal $d$-simplex $I$, the other with internal $d$-simplex $J$; on the level of the triangulation, mutating $T$ at $O_I$ replaces the one by the other.

\subsection{Shear coordinates for $d=1$}\label{six-two}
We begin in the
$d=1$ setting, where $A^1_n$ is simply the path algebra of linearly oriented
$A_n$ quiver, and $\mathscr C^1_n$ is the corresponding
cluster category $D^b(\mod A^1_n)/\Sigma^{-2}S$.
The combinatorial model for all $d$ described in the previous subsection restricts to something simpler here. The convex hull of $n+3$ points on the moment curve is a convex $(n+3)$-gon, and the choice to locate the vertices on the moment curve does not make any difference: we can simply think of any convex polygon in the plane, with vertices numbered counterclockwise from 0 to $n+2$.

The indecomposable objects of $\mathscr C^1_n$ are of the form
$O_{\{i_0,i_1\}}$, where the possible pairs $\{i_0,i_1\}$ correspond to the diagonals of the polygon.

For $K$ an indecomposable object of 
$\mathscr C^1_n$, we write $\gamma(K)$  for the corresponding diagonal of the polygon.

Let $T=\bigoplus_{i=1}^{n} E_i$ be a cluster tilting object. The diagonals corresponding to summands of $T$
define a triangulation.

We will be interested in what we refer to as ``arcs'' in the polygon.
These are curves whose endpoints lie on two different boundary segments
of the polygon (and, in particular, not at vertices). 
The shear coordinates of an arc $\alpha$ with respect to a triangulation $T$ are given
as follows. Each diagonal $\gamma(E_i)$ defines a quadrilateral composed of the two
triangles with edges corresponding to summands of $T$ or edges of the polygon,
and having $\gamma(E_i)$ as an edge. In order for $\sh_T(\alpha)_i$ to
be non-zero, $\alpha$ must enter and leave the quadrilateral surrounding
$\gamma(E_i)$ on opposite
sides. In this case, we have $\sh_T(\alpha)_i=1$ or $-1$, according to the diagram below.
(We also include an example of an arc
  $\alpha$ with $\sh_T(\alpha)_i=0$.)

\[ \begin{tabular}{c@{\hspace{1.5cm}}c@{\hspace{1.5cm}}c}
\begin{tikzpicture}
 \coordinate (A) at (0.3,0.3);
 \coordinate (B) at (1.3,1.1);
 \coordinate (C) at (2.3,-.5);
 \coordinate (D) at (1,-1);
 \draw (A) -- (B);
 \draw (B) -- (C);
 \draw (C) -- (D);
 \draw (D) -- (A);
 \draw (A) -- node [above=-1pt] {\(E_i\)} (C);
 \draw [dashed,bend left=10] ($(A)!0.67!(D)$) to node [below=-1pt] {\(\alpha\)} ($(B)!0.75!(C)$);
\end{tikzpicture}
&
\begin{tikzpicture}
 \coordinate (A) at (0.3,0.3);
 \coordinate (B) at (1.3,1.1);
 \coordinate (C) at (2.3,-.5);
 \coordinate (D) at (1,-1);
 \draw (A) -- (B);
 \draw (B) -- (C);
 \draw (C) -- (D);
 \draw (D) -- (A);
 \draw (A) -- node [above=-1pt] {\(E_i\)} (C);
 \draw [dashed,bend left=15] ($(A)!0.3!(B)$) to node [near end,left=-1pt] {\(\alpha\)} ($(C)!0.75!(D)$);
\end{tikzpicture}
&
\begin{tikzpicture}
 \coordinate (A) at (0.3,0.3);
 \coordinate (B) at (1.3,1.1);
 \coordinate (C) at (2.3,-.5);
 \coordinate (D) at (1,-1);
 \draw (A) -- (B);
 \draw (B) -- (C);
 \draw (C) -- (D);
 \draw (D) -- (A);
 \draw (A) -- node [above=-1pt] {\(E_i\)} (C);
 \draw [dashed,bend right] ($(A)!0.67!(D)$) to node [near start,right=-1pt] {\(\alpha\)} ($(A)!0.4!(B)$);
\end{tikzpicture} \\
\( \sh_T(\alpha)_i=1 \) & \( \sh_T(\alpha)_i=-1 \) & \( \sh_T(\alpha)_i=0 \)
\end{tabular} \]

Note that our choice of signs in shear coordinates is the opposite of the usual one. This is a result of the choice we have made for the orientation of the numbering of the vertices of the polygon, which is consistent with the interpretation in terms of points on the moment curve.

For an arc $\alpha$, we define $\alpha^-$ to be the line segment that results by moving its endpoints counterclockwise until they reach vertices of the polygon.

The following theorem follows immediately from putting together two facts that are well-known, though possibly to different sets of people.

\begin{theorem} Let $T$ be a cluster tilting object, and let $\alpha$ be an arc. If $\alpha^-$ is not a diagonal of the polygon, then $\sh_T(\alpha)=0$. Otherwise, 
  $\sh_T(\alpha)=\index_T(O_{\alpha^-})$.
  \end{theorem}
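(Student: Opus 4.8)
The plan is to observe that both sides of the claimed identity, viewed as functions of the triangulation \( T \) with the arc \( \alpha \) (and hence its straightening \( \alpha^- \)) held fixed, obey the \emph{same} tropical coefficient dynamics, and then to pin down the common value by a single direct computation. Concretely, for fixed \( \alpha \) I would regard \( T \mapsto \sh_T(\alpha) \) and \( T \mapsto \index_T(O_{\alpha^-}) \) as two elements of \( \mathbb{Z}^n \) depending on \( T \), and show they transform identically under a flip of \( T \) at a diagonal \( \gamma(E_m) \). This is the sense in which the result is, as the author says, an immediate consequence of two facts known to two different communities.

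The two ingredients are as follows. On the representation-theoretic side, the result of Dehy and Keller, Theorem~\ref{DK-thm} (which is the \( d=1 \) specialization of the present paper), shows that \( \index_T(O_{\alpha^-}) \) transforms under mutation by the tropical coefficient rule \( \mu^T_m \); since the quiver of \( \End_{\mathscr{C}}(T) \) for a polygon triangulation has no loops or \(2\)-cycles, the discussion of Section~\ref{2-2} identifies \( \mu^T_m \) with \( \mu^B_m \), where \( B \) is the signed adjacency matrix of the triangulation. On the surface-combinatorial side, it is classical in the theory of shear coordinates that the shear coordinates of a fixed lamination transform under a flip by the tropical mutation rule \( \mu^B_m \), governed by the same matrix \( B \). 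Thus both functions of \( T \) solve the identical first-order recursion, so it remains only to anchor them at one triangulation.

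Since the flip graph of triangulations of the polygon is connected, and mutation of cluster tilting objects corresponds to flips, it now suffices to verify the identity for a single \( T \). I would take \( T \) to contain \( \delta = \alpha^- \) as one of its diagonals (possible whenever \( \delta \) is a genuine diagonal): then \( O_\delta \) is a summand \( E_{j_0} \) of \( T \), its resolution in \( \add T \) is trivial, and \( \index_T(O_\delta) = e_{j_0} \). On the other side, \( \alpha \) hugs \( \gamma(E_{j_0}) \) and essentially crosses no other diagonal of \( T \), so \( \sh_T(\alpha)_i = 0 \) for \( i \neq j_0 \), while in the quadrilateral surrounding \( \gamma(E_{j_0}) \) the arc enters and leaves on opposite sides, contributing \( \sh_T(\alpha)_{j_0} = 1 \); hence \( \sh_T(\alpha) = e_{j_0} \) as well. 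For the degenerate case, if \( \alpha^- \) is not a diagonal then it is a boundary edge (or a point), so \( O_{\alpha^-} = 0 \) and \( \index_T(O_{\alpha^-}) = 0 \); correspondingly \( \alpha \) may be isotoped into a collar of the boundary where it crosses no quadrilateral essentially, giving \( \sh_T(\alpha) = 0 \).

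The main obstacle is bookkeeping of conventions rather than any deep step: one must check that the two occurrences of \( B \) really are the same matrix (and not its transpose or negative) and, above all, that the sign conventions align so that the base-case crossing contributes \( +1 \) rather than \( -1 \). This is precisely where the paper's deliberately reversed sign convention for shear coordinates (flagged after the figure) must be reconciled with the index convention \( \index_T(X) = [T_0] - [T_1] \); making these orientations consistent is the only genuinely delicate point, after which the theorem follows immediately from the two cited facts.
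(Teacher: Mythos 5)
Your proposal is correct, but it takes a genuinely different route from the paper. The paper's proof is a two-line gluing of identifications through a third quantity: by Fomin--Thurston \cite[Proposition~17.3]{FT}, $\sh_T(\alpha)$ equals the $g$-vector of the cluster variable attached to $O_{\alpha^-}$, and by Dehy--Keller \cite{DK}, $\index_T(O_{\alpha^-})$ equals that same $g$-vector; the degenerate case is handled by the same corner-cutting observation you make. You instead avoid $g$-vectors entirely: you use the \emph{mutation} forms of both facts (Theorem~\ref{DK-thm} together with the identification $\mu^T_m = \mu^B_m$ of Section~\ref{2-2} on the index side, and the classical tropical flip rule for shear coordinates on the geometric side), then close the argument with connectivity of the flip graph and a base-case computation at a triangulation containing $\alpha^-$, where the elementary-lamination picture gives the unit vector $e_{A^-}$ on both sides. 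Both routes are sound, and they lean on the same two sources; what yours buys is that it is exactly the strategy the paper itself is forced to adopt in higher dimension (compare the uniqueness argument for higher shear coordinates and the proof of Theorem~\ref{shear-is-index}, which runs on Corollary~\ref{cor.main_thm} plus base cases at mutable summands), so your $d=1$ argument is the one that generalizes, whereas the paper's citation of $g$-vectors does not. The cost is that you carry the convention bookkeeping (that the two occurrences of $B$ agree and that the base crossing is $+1$ under the paper's reversed sign convention) explicitly; you flag this but do not carry it out, which is acceptable at the level of detail the paper itself operates at, since its citations absorb the same bookkeeping.
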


\begin{proof} If $\alpha^-$ is not a diagonal of the polygon, then
  $\alpha^-$ is an edge, and the endpoints of $\alpha$ are on adjacent edges of the polygon. In this case, it is clear that the shear coordinates of $\alpha$ are zero, as it can never enter and leave a quadrilateral by opposite edges.

  Otherwise, $\sh_T(\alpha)$ agrees with the $g$-vector of the cluster variable corresponding to $O_{\alpha^-}$ with respect to the initial variables corresponding to the summands of $T$ by Fomin--Thurston \cite[Proposition 17.3]{FT}.

  The result of Dehy--Keller \cite{DK} mentioned earlier implies, as they explain, that the index of an indecomposable object $K$ in $\mathcal C^1_n$ with respect to a cluster tilting object $T$ agrees with the $g$-vector of the cluster variable corresponding to $K$ with respect to the initial variables corresponding to the summands of $T$.
  
Putting these two facts together, the theorem is proved.\end{proof}

This result can be extended to collections of arcs not intersecting in their interiors. These are called \emph{laminations}. The corresponding objects are no longer indecomposable, but they are rigid.

For generalization to higher dimensions, it will be convenient
  to express the combinatorial essence of the geometric story we have been
  telling about arcs. Let $A=(a_0,a_1)\in\mathbb R^2$, where $a_0<a_1$, and neither
  $a_0$ nor $a_1$ belongs to $\{v_0,v_1,\dots,v_{n+2}\}$. We think of $A$ as corresponding
  to the arc $\alpha$ which is the intersection of the convex hull of
  $p(v_0),\dots,p(v_{n+2})$ with the line segment from $p(a_0)$ to $p(a_1)$.
  
  To describe $\alpha^-$, it is convenient to introduce some further notation.
  For a real number \( t \), let \( t^- \) denote the largest $v_i$ less than or equal to $t$. If $t<v_{0}$, define $t^-=v_{n+2d}$. Now define $A^-=\{a^-\mid a\in A\}$. Then $\alpha^-=A^-$.

For future use, let us similarly define $t^+$ to be the smallest $v_i$ greater than or equal to $t$ (or $v_0$ if there is none), and $A^+=\{a^+\mid a\in A\}$.

\subsection{Higher shear coordinates}
Now consider the case that $d>1$ is odd.

Pick $A$ a $(d+1)$-tuple of real numbers, disjoint from $\{v_i\}$. Then $A$ determines  a $(d+1)$-tuple of points on the moment curve, avoiding the vertices of $P$. We consider slicing $P$ by the $d$-dimensional plane which is the affine hull of $\{p(a)\mid a\in A\}$. We want to define shear coordinates for this slice with respect to $T$.

Let $E_m$ be a summand of $T$, and suppose that we want to compute the shear coordinate of $A$ corresponding to it. 
Suppose first that $E_m$ is mutable. In this case, there is a $(2d+2)$-tuple
$B=(b_0<b_1<\dots<b_{2d+1})$ such that $T$ restricts to a triangulation of
the convex hull of $p(b_0),\dots,p(b_{2d+1})$, and $E_m$ corresponds to one of the two
internal simplices of this polytope, whose vertices are the points corresponding to some $R\subseteq B$, where either $R=(b_0,b_2,\dots,b_{2d})$ or $R=(b_1,b_3,\dots,b_{2d+1})$. The points of $B$ divide the real line into $2d+2$ intervals
(with the interval below $b_0$ and the interval above $b_{2d+1}$ counting as the same interval). There are $d+1$ of these intervals which are immediately below some element of $R$, and there are $d+1$ which are immediately above some element of $R$. If there is an element of $A$ which lies in each of the intervals immediately below an element of $R$, the shear coordinate in position $E_m$ of $A$ is $-1$. If there is an element of $A$ which lies in each of the intervals immediately above an element of $R$, the shear coordinate is $1$. Otherwise, the shear coordinate is zero. 

Unfortunately, if $E_m$ is not mutable, then we do not have control over the configuration of the triangulation $T$ around the $d$-simplex corresponding to $E_m$, and it is not clear how to use the above approach to define shear coordinates.

We may however take inspiration from the fact that shear coordinates satisfy tropical coefficient dynamics, and define
\begin{definition}
A function \( \sh(A) \colon \{ \text{cluster tilting objects in } \mathscr{C} \} \to \mathbb{Z}^n \) defines higher shear coordinates of $A$ if 
\begin{enumerate}
\item \( \sh_T(A)_m \) is the shear coordinate of \( A \) with respect to \( E_m \) as defined above whenever \( E_m \) is a mutable summand of \( T \);
\item \( \sh(A) \) satisfies higher tropical coefficient dynamics (see Definition~\ref{def.higher_trop_comb}).
\end{enumerate}
\end{definition}

One easily sees that there is at most one function defining shear coordinates for a given \( A \): Start with any two that differ in some coordinate on a given \( T \). By the first property of shear coordinates they do agree on mutable summands. Then the second property implies that they still differ on the same coordinate of all mutations \( T^{\star} \) of \( T \). But under iterated mutation any summand will eventually become mutable, contradicting the first property we assumed.

We will show that a unique function defining shear coordinates does exist, by showing the following, where $A^-$ is defined as at the end of Section \ref{six-two}.

\begin{theorem}\label{shear-is-index}
There exists a (unique) function \( \sh(A) \) defining shear coordinates of \( A \). This function is given by
\[ sh_\bullet(A) = \index_\bullet(O_{A^-}) \]
where we set \( O_{A^-} = 0 \) whenever \( A^- \not\in \IndexC{n} \).
\end{theorem}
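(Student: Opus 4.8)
The uniqueness of a function defining shear coordinates of \( A \) is already established in the paragraph preceding the theorem, so the only content is existence. The plan is to verify that the single function \( \bullet \mapsto \index_\bullet(O_{A^-}) \), read off coordinatewise and with the convention \( O_{A^-}=0 \) whenever \( A^- \notin \IndexC{n} \), satisfies the two defining properties. Property (2), higher tropical coefficient dynamics, is immediate. Each object \( O_{A^-} \) is rigid: if \( A^- \in \IndexC{n} \) then \( O_{A^-} \) is indecomposable and \( \Hom(O_{A^-}, \sus O_{A^-}) = 0 \), since a non-intertwining index set such as \( A^- \) does not intertwine itself (Section~\ref{recall}); and if \( A^- \notin \IndexC{n} \) then \( O_{A^-} = 0 \) is trivially rigid with \( \index_\bullet(0) = 0 \), which satisfies the mutation formula of Definition~\ref{def.higher_trop_comb} trivially. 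Since \( d \) is odd, Corollary~\ref{cor.main_thm} applied to the rigid object \( O_{A^-} \) is, as observed immediately after that corollary, exactly the statement that \( \index_\bullet(O_{A^-}) \) obeys higher tropical coefficient dynamics.

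The substance is property (1): for every cluster tilting \( T \) and every \emph{mutable} summand \( E_m \) one must show \( \mult{E_m}{\index_T(O_{A^-})} = \sh_T(A)_m \). Fix the \( (2d+2) \)-subset \( B = (b_0 < \cdots < b_{2d+1}) \) witnessing mutability, so that \( T \) restricts to a triangulation of the subpolytope \( P_B \) whose unique internal \( d \)-simplex is \( R \), with \( E_m = O_R \) and \( R \) the even- or odd-indexed alternating subset of \( B \). I would first treat the ``clean'' configurations, where no vertex lies between the elements of \( B \) relevant to the rounding. In the ``above'' case (a point of \( A \) in each interval immediately above an element of \( R \)) one computes \( A^- = R \), so \( O_{A^-} = E_m \) and \( \mult{E_m}{\index_T(E_m)} = 1 = \sh_T(A)_m \). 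In the ``below'' case (a point of \( A \) in each interval immediately below an element of \( R \)) one computes \( A^- = R' \), the other alternating subset, so \( O_{A^-} = E_m^{\star} \); here the backward rotation of the left exchange \( (d+2) \)-angle of Theorem~\ref{thm.find_replacement},
\[ \sus^{-1} E_m^{\star} \to E_m \to \overline{T}^1 \to \cdots \to \overline{T}^d \to E_m^{\star}, \]
is a resolution of \( E_m^{\star} \) by \( \add T \) in which \( E_m \) occurs only as the \( T_d \)-term (the \( \overline{T}^i \in \add \overline{T} \) contain no \( E_m \)); hence \( \mult{E_m}{\index_T(E_m^{\star})} = (-1)^d = -1 \) precisely because \( d \) is odd, matching \( \sh_T(A)_m = -1 \). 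When \( A \) is neither all-above nor all-below, \( A^- \) is a simplex lying off the crossing region (or not in \( \IndexC{n} \)), and \( E_m = O_R \) does not appear in a minimal \( T \)-resolution of \( O_{A^-} \), giving coefficient \( 0 = \sh_T(A)_m \).

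The main obstacle is upgrading these clean cases to arbitrary \( A \), that is, handling refining vertices lying strictly between consecutive elements of \( B \), which make \( A^- \) differ from \( R \) or \( R' \) even when \( A \) still occupies the above- or below-intervals. The key claim to establish is that \( \mult{E_m}{\index_T(O_{A^-})} \) depends only on the pattern of \( B \)-intervals occupied by the points of \( A \), and not on the finer position of \( A^- \); granting this, one reads off the value on the clean representatives computed above, and the three possibilities \( 1, -1, 0 \) match \( \sh_T(A)_m \) in all cases, since \( \sh_T(A)_m \) itself depends only on that same interval pattern. This invariance is where the combinatorial description of resolutions in \( \mathscr{C}^d_n \) from \cite{OT} carries the load: refining vertices contribute summands of \( T \) other than \( E_m \) to a minimal resolution of \( O_{A^-} \), leaving the \( E_m \)-multiplicity unchanged; alternatively one may track \( \mult{E_m}{\index_T(O_{A^-})} \) under mutations of \( T \) at summands \( E_j \neq E_m \) using the already-verified dynamics of property (2). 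I expect the bookkeeping for this invariance — together with the verification that the two nonzero crossing directions really produce the simplices \( R \) and \( R' \), so that the sign \( (-1)^d \) appears and forces the oddness of \( d \) — to be the most delicate part of the argument.
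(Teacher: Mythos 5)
Your handling of the periphery is correct and agrees with the paper: uniqueness is indeed settled before the theorem, property (2) follows from rigidity of \( O_{A^-} \) together with Corollary~\ref{cor.main_thm}, and your two ``clean'' nonzero cases are right --- in particular computing \( \mult{E_m}{\index_T(E_m^\star)} = (-1)^d \) from the backward-rotated left exchange \( (d+2) \)-angle is a legitimate variant of the paper's detour through \( T^\star \) and the mutation formula. However, the core of property (1) consists of exactly the two statements you defer: that the \( E_m \)-coefficient of \( \index_T(O_{A^-}) \) vanishes when \( A \) occupies neither all the above-intervals nor all the below-intervals, and that the coefficient is unaffected by ``refining'' vertices of \( P \) lying strictly inside the \( B \)-intervals. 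Your proposal asserts both (the ``key claim'') but provides no mechanism for either, and this is precisely where the paper's proof does its work. The paper's tool is the intertwining criterion for morphisms: \( \Hom(O_R, O_{A^-}) = \Hom(O_R, \Sigma O_{A^+}) \) is \( k \) or \( 0 \) according to whether \( R \) and \( A^+ \) intertwine, combined with the explicit left \( \overline{T} \)-approximation \( E_m \to \bigoplus_i O_{R_i} \) from \cite[Theorem~6.3(3)]{OT}, and with Theorem~\ref{thm.find_replacement}(5) plus Lemma~\ref{lemma.rigid_only_one_end}, which force \( E_m \) to contribute to a resolution of the rigid object \( O_{A^-} \) only in position \( T_0 \) or \( T_d \), and only in one of the two. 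Since ``\( A \) lies in the intervals immediately above \( R \)'' is literally equivalent to ``\( A^+ \) intertwines \( R \) but intertwines none of the \( R_i \)'', this criterion depends only on the interval pattern of \( A \) relative to \( B \); the clean/non-clean distinction never arises, and the converse direction (nonzero coefficient implies the above- or below-pattern) falls out of the same analysis, since a nonzero coefficient forces \( A^+ \) to intertwine both \( R \) and the shift of \( R^* \).

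Neither of your proposed repair strategies closes the gap. Strategy (b), tracking the coefficient under mutations of \( T \) at summands \( E_j \neq E_m \), is misdirected: the presence of refining vertices is a property of \( A \) and of the vertex set of the polytope, not of \( T \), so no sequence of mutations turns a non-clean configuration into a clean one; moreover, mutating at \( E_j \) changes the exchange subpolytope \( B \) attached to \( E_m \) and mixes the \( m \)-th coordinate with the others according to the sign-dependent dynamics, so the coefficient is not simply transported. Strategy (a) would require a combinatorial description of minimal \( T \)-resolutions of arbitrary objects \( O_{A^-} \) in \( \mathscr{C}^d_n \), which you do not supply and which the paper deliberately avoids needing --- it only ever uses the approximation of the single summand \( E_m \). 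As it stands, the proposal proves the easy inclusions and leaves the heart of the theorem unproven; the fix along the paper's lines is to discard the ``clean case plus invariance'' reduction and argue directly with the Hom/intertwining computation.
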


\begin{proof}
We have seen in Corollary \ref{cor.main_thm} that the index satisfies higher tropical coefficient dynamics. Thus it only remains to show that the index computes shear coordinates with respect to mutable summands as described above.

To that end, assume \( E_m \) is mutable, and let \( B \) and \( R \) be as above. For notational purposes we fix \( R = (b_0, b_2, \ldots, b_{2d}) \), and write \( R^* = (b_1, b_3, \ldots, b_{2d+1}) \), but nothing changes if the roles are reversed.

Note that
\[ \Hom(O_R, O_{A^-}) = \Hom(O_R, \Sigma O_{A^+}) = \begin{cases} k & \text{\( R \) and \( A^+ \) intertwine} \\ 0 & \text{otherwise.} \end{cases} \]

The left \( \overline{T} \)-approximation of \( E_m \) is given by the sum over all \( O_{R_i} \), where \( R_i \) is obtained from \( R \) be replacing \( b_{2i} \) by \( b_{2i+1} \) (see \cite[Theorem~6.3(3)]{OT}).

If \( A \) lies in the intervals directly above \( R \), then \( R \) and \( A^+ \) intertwine, but none of the \( R_i \) intertwine with \( A^+ \). It follows that there is a map from \( E_m \) to \( O_{A^-} \) which does not factor through a left \( \overline{T} \)-approximation of \( E_m \), and hence that \( E_m \) appears in the right \( T \)-approximation of \( O_{A^-} \).

By Theorem~\ref{thm.find_replacement}(5) and Lemma~\ref{lemma.rigid_only_one_end} this is the only contribution of \( E_m \) to the index of \( O_{A^-} \), so \( \mult{E_m}{\index_T(O_{A^-})} = 1 \) as desired.

\smallskip
If \( A \) lies in the intervals directly below \( R \), then it lies in the intervals directly above \( R^* \). The same argument as above shows that \( \mult{E_m^*}{\index_{T^*}(O_{A^-})} = 1 \), and it follows from our mutation formula that \( \mult{E_m}{\index_{T}(O_{A^-})} = -1 \).

\smallskip
Now assume that \( \mult{E_m}{\index_T(O_{A^-})} > 0 \). Then \( E_m \) appears as a summand in the right \( T \)-approximation of \( O_{A^-} \). In particular \( R \) and \( A^+ \) intertwine. Since the morphism \( E_m \to O_{A^-} \) does not factor through \( E_m \to \bigoplus_i O_{R_i} \) it necessarily induces a non-zero morphism \( \Sigma^{-1} E_m^* \to O_{A^-} \). In particular \( (b_1+1, b_3+1, \ldots, b_{2d+1}+1) \) and \( A^+ \) also intertwine. Thus, up to cyclic renumbering,
either \( b_{2i+1}+1 < a_i^+ < b_{2i+2} \), so \( A \) lies in the intervals immediately below \( R \), or \( b_{2i} < a_i^+ < b_{2i+1}+1 \), so \( A \) lies in the intervals immediately above \( R \). In the former case we have seen above that \( \mult{E_m}{\index_{T}(O_{A^-})} = -1 \), contradicting our current assumption. In the latter case we have seen that indeed \( \mult{E_m}{\index_T(O_{A^-})} = 1 \).

\smallskip
Finally, assume \( \mult{E_m}{\index_T(O_{A^-})} < 0 \). Then \( \mult{E_m^*}{\index_{T^*}(O_{A^-})} > 0 \), and thus \( A \) lies either in the intervals immediately below or the intervals immediately above \( R^* \). But this is the exact same condition as being in the intervals immediately above or immediately below \( R \).
\end{proof}

In fact, shear coordinates are also determined by the following different set of three properties, which may be easier to verify:
\begin{enumerate}
\item If $A^-$ does not define an internal $d$-simplex of $P$, then the shear coordinates of $A$ with respect to any $T$ are zero.
\item If $A^-$ does define an internal simplex of $P$, and the triangulation $T$ contains that simplex as a face, then the shear coordinates of $A$ with respect to $T$ is the unit vector $e_{A^-}$  
  (i.e., 1 in position $A^-$ and zero elsewhere).
\item If $T^*$ is obtained from $T$ by a mutation,
  the shear coordinates of $A$ with respect to $T^*$ are obtained from those with respect to $A$ by tropical coefficient mutation.
\end{enumerate}

For \( d=1 \) it is clear that these three properties uniquely describe shear
coordinates.

For higher \( d \), it is clear that there is at most one collection of shear coordinates satisfying the three properties above, but it is not clear that there are any: on the face of it, it isn't clear that tropical coefficient dynamics define a consistent collection of shear coordinates. However, one easily sees that the index of \( O_{A^-} \) does satisfy (1), by defining the corresponding object to be zero, (2), by definition of the index, and (3) by Corollary~\ref{cor.main_thm}.

As in the $d=1$ case, instead of considering $A$,
a single increasing $(d+1)$-tuple of real numbers, we could consider a collection $\mathcal A$ of increasing $(d+1)$-tuples all disjoint from $\{v_i\}$ and which are pairwise non-intertwining. Geometrically, this is equivalent to saying that the $d$-simplices corresponding to the elements of $\mathcal A$ do not
intersect.
The shear coordinates of such a collection of $(d+1)$-tuples 
can be defined as the sum of the shear coordinates of each of the $(d+1)$-tuples
individually.

We then have the following result.

\begin{corollary} Let $\mathcal A$ be a collection of increasing
  $(d+1)$-tuples of real
  numbers all disjoint from $\{v_i\}$, and which are pairwise non-intertwining.
Then $sh_\bullet(\mathcal A)$ satisfies higher tropical coefficient dynamics.
\end{corollary}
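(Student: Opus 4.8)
The plan is to realize \(\sh_\bullet(\mathcal A)\) as the index of a single rigid object and then quote Corollary~\ref{cor.main_thm} for that one object. Concretely, set
\[ O_{\mathcal A} = \bigoplus_{A \in \mathcal A} O_{A^-}, \]
with the convention of Theorem~\ref{shear-is-index} that \(O_{A^-} = 0\) whenever \(A^- \not\in \IndexC{n}\). Combining the defining additivity of shear coordinates for collections (\(\sh_T(\mathcal A) = \sum_{A}\sh_T(A)\)), Theorem~\ref{shear-is-index} (\(\sh_T(A) = \index_T(O_{A^-})\)), and the additivity of the index on direct sums (a direct sum of resolving \((d+2)\)-angles is again one, by (D1)), one gets
\[ \sh_T(\mathcal A) = \sum_{A \in \mathcal A} \sh_T(A) = \sum_{A \in \mathcal A} \index_T(O_{A^-}) = \index_T(O_{\mathcal A}) \]
for every cluster tilting object \(T\). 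Once \(O_{\mathcal A}\) is known to be rigid, Corollary~\ref{cor.main_thm} applies to \(K = O_{\mathcal A}\) (the no-loop hypothesis is automatic in \(\mathscr C^d_n\) by Proposition~\ref{prop.C_no_loops}) and says exactly that \(\index_\bullet(O_{\mathcal A})\), hence \(\sh_\bullet(\mathcal A)\), transforms under a single mutation according to the higher tropical coefficient dynamics of Definition~\ref{def.higher_trop_comb}. This finishes the proof.

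It is worth recording why this indirect route is needed rather than simply adding the dynamics of the individual summands. Higher tropical coefficient dynamics is only piecewise linear in the coefficient \(f(T)_m\): which case of the mutation formula applies depends on the \emph{sign} of the \(E_m\)-coefficient. Since the sign of the coefficient of a sum need not agree with the signs of the summands, one cannot add the dynamics of the \(\index_\bullet(O_{A^-})\) term by term. Packaging the whole lamination as the index of a single object \(O_{\mathcal A}\) and invoking Corollary~\ref{cor.main_thm} for that object sidesteps the issue entirely, precisely because that corollary is already stated for arbitrary rigid objects.

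The one substantive point, which I expect to be the main obstacle, is the rigidity of \(O_{\mathcal A}\), i.e.\ \(\Hom(O_{\mathcal A}, \sus O_{\mathcal A}) = 0\). By the \(\Hom\)-computation recalled in Section~\ref{recall}, this reduces to checking that the index sets \(A^-\) and \(A'^-\) do not intertwine for all \(A, A' \in \mathcal A\); the case where one of them fails to lie in \(\IndexC{n}\) is vacuous by the zero convention, and the diagonal case \(A = A'\) holds because no set intertwines itself. Here the hypothesis that \(\mathcal A\) is pairwise non-intertwining enters. I would deduce the claim from the fact that the rounding map \(t \mapsto t^-\) is weakly (cyclically) monotone, so it preserves the cyclic order of the \(2d+2\) endpoints of \(A\) and \(A'\) up to coincidences; since intertwining demands a \emph{strict} cyclic alternation of these endpoints, a non-intertwining pair \(A, A'\) stays non-intertwining (possibly becoming degenerate through coincidences) after rounding, and can never become intertwining. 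Care is needed only at the wraparound built into the \((-)\) operation and when endpoints collide, but in both situations the strict alternation defining intertwining fails, so rigidity is preserved.
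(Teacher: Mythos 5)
Your proposal is correct and takes essentially the same route as the paper: identify \(\sh_\bullet(\mathcal A)\) with \(\index_\bullet\bigl(\bigoplus_{A\in\mathcal A} O_{A^-}\bigr)\) via additivity, and invoke Corollary~\ref{cor.main_thm} for that single rigid object. The only difference is that the paper asserts the rigidity of \(\bigoplus_{A\in\mathcal A} O_{A^-}\) without comment, whereas you supply the (correct) verification that the rounding \(A\mapsto A^-\) preserves non-intertwining, along with the apt observation that one cannot instead mutate the summands' indices term by term because the sign cases in the dynamics need not be compatible across summands.
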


\begin{proof} We have that $$sh_\bullet(\mathcal A)=\index_\bullet(\bigoplus_{A\in\mathcal A} O_{A^-}),$$ and since $\bigoplus_{A\in\mathcal A} O_{A^-}$ is rigid, its
  index satisfies higher tropical coefficient dynamics.
 \end{proof}

\section{An example in $\mathscr C^3_3$}\label{ex}

In this section, we consider a particular case of the situation studied in \cite{OT}. $A^3_3$ is the
second higher Auslander algebra of linearly oriented $A_3$, and 
$\mathscr C^3_3$ is its $5$-angulated cluster category. We describe them both 
explicitly below.

\begin{example}
Consider the algebra given by the following quiver with relations.
\[ \begin{tikzpicture}[scale=.7,baseline=0pt]
 \node (0) at (0,0) {\(\scriptstyle 0\)};
 \node (1) at (1,1) {\(\scriptstyle 1\)};
 \node (2) at (2,2) {\(\scriptstyle 2\)};
 \node (3) at (2,0) {\(\scriptstyle 3\)};
 \node (4) at (3,1) {\(\scriptstyle 4\)};
 \node (5) at (4,0) {\(\scriptstyle 5\)};
 \node (6) at (2,-1.4) {\(\scriptstyle 6\)};
 \node (7) at (3,-.4) {\(\scriptstyle 7\)};
 \node (8) at (4,-1.4) {\(\scriptstyle 8\)};
 \node (9) at (4,-2.8) {\(\scriptstyle 9\)};
 \draw [->] (0) to node [above left=-2pt] {\(\scriptscriptstyle a\)} (1);
 \draw [->] (1) to node [above left=-2pt] {\(\scriptscriptstyle b\)} (2);
 \draw [->] (1) to node [above right=-2pt] {\(\scriptscriptstyle c\)} (3);
 \draw [->] (2) to node [above right=-2pt] {\(\scriptscriptstyle d\)} (4);
 \draw [->] (3) to node [above left=-2pt] {\(\scriptscriptstyle e\)} (4);
 \draw [->] (4) to node [above right=-2pt] {\(\scriptscriptstyle f\)} (5);
 \draw [->] (3) to node [left=-2pt] {\(\scriptscriptstyle g\)} (6);
 \draw [->] (4) to node [left=-2pt] {\(\scriptscriptstyle h\)} (7);
 \draw [->] (5) to node [right=-2pt] {\(\scriptscriptstyle i\)} (8);
 \draw [->] (6) to node [above left=-2pt] {\(\scriptscriptstyle j\)} (7);
 \draw [->] (7) to node [above right=-2pt] {\(\scriptscriptstyle k\)} (8);
 \draw [->] (8) to node [left=-2pt] {\(\scriptscriptstyle l\)} (9);
\end{tikzpicture} 
\qquad \qquad \begin{aligned} & (ca, db-ec, fe, gc, \\ & he-jg, hd, if-kh, \\ & kj, lk) \end{aligned} \]
This algebra is \( 3 \)-representation finite -- indeed it is a smaller version of \cite[Example~6.13]{I_higherAuslander}.

We will work in the associated \( 5 \)-angulated cluster category \( \mathscr{C} \), as in Definition~\ref{def.cluster_cat}, and illustrate it by drawing our fundamental domain as in Observation~\ref{obs.fundamental_domain}.

We initially consider the cluster tilting object \( T \) which is the image of the algebra in the cluster category. Then the indices of all indecomposable objects are given as in the following picture. (We write the 10-dimensional vectors in two lines purely for space reasons.)

\[ \begin{tikzpicture}[yscale=.75,baseline=0pt,xscale=.70]
 \node [scale=.6] (0) at (0,0) {\( \begin{smallmatrix} ( 1 \; 0 \; 0 \; 0 \; 0 \\ 0 \; 0 \; 0 \; 0 \; 0) \end{smallmatrix} \)};
 \node [scale=.6] (1) at (1,1) {\( \begin{smallmatrix} ( 0 \; 1 \; 0 \; 0 \; 0 \\ 0 \; 0 \; 0 \; 0 \; 0) \end{smallmatrix} \)};
 \node [scale=.6] (2) at (2,2) {\( \begin{smallmatrix} ( 0 \; 0 \; 1 \; 0 \; 0 \\ 0 \; 0 \; 0 \; 0 \; 0) \end{smallmatrix} \)};
 \node [scale=.6] (3) at (2,0) {\( \begin{smallmatrix} ( 0 \; 0 \; 0 \; 1 \; 0 \\ 0 \; 0 \; 0 \; 0 \; 0) \end{smallmatrix} \)};
 \node [scale=.6] (4) at (3,1) {\( \begin{smallmatrix} ( 0 \; 0 \; 0 \; 0 \; 1 \\ 0 \; 0 \; 0 \; 0 \; 0) \end{smallmatrix} \)};
 \node [scale=.6] (5) at (4,0) {\( \begin{smallmatrix} ( 0 \; 0 \; 0 \; 0 \; 0 \\ 1 \; 0 \; 0 \; 0 \; 0) \end{smallmatrix} \)};
 \node [label={[scale=.8,label distance=-1mm]below:\(P_6\)},scale=.6,draw] (6) at (2,-1.9) {\( \begin{smallmatrix} ( 0 \; 0 \; 0 \; 0 \; 0 \\ 0 \; 1 \; 0 \; 0 \; 0) \end{smallmatrix} \)};
 \node [scale=.6] (7) at (3,-.9) {\( \begin{smallmatrix} ( 0 \; 0 \; 0 \; 0 \; 0 \\ 0 \; 0 \; 1 \; 0 \; 0) \end{smallmatrix} \)};
 \node [scale=.6] (8) at (4,-1.9) {\( \begin{smallmatrix} ( 0 \; 0 \; 0 \; 0 \; 0 \\ 0 \; 0 \; 0 \; 1 \; 0) \end{smallmatrix} \)};
 \node [scale=.6] (9) at (4,-3.8) {\( \begin{smallmatrix} ( 0 \; 0 \; 0 \; 0 \; 0 \\ 0 \; 0 \; 0 \; 0 \; 1) \end{smallmatrix} \)};
 
 \node [scale=.6] (10) at (5.5,-0.25) {\( \begin{smallmatrix} ( -1 \; 1 \; 0 \; -1 \; 0 \\ 0 \; 1 \; 0 \; 0 \; 0) \end{smallmatrix} \)};
 \node [scale=.6] (11) at (6.5,0.75) {\( \begin{smallmatrix} ( -1 \; 0 \; 1 \; 0 \; -1 \\ 0 \; 0 \; 1 \; 0 \; 0) \end{smallmatrix} \)};
 \node [scale=.6] (12) at (7.5,1.75) {\( \begin{smallmatrix} ( -1 \; 0 \; 0 \; 0 \; 0 \\ 0 \; 0 \; 0 \; 0 \; 0) \end{smallmatrix} \)};
 \node [scale=.6] (13) at (7.5,-.25) {\( \begin{smallmatrix} ( 0 \; -1 \; 1 \; 0 \; 0 \\ -1 \; 0 \; 0 \; 1 \; 0) \end{smallmatrix} \)};
 \node [scale=.6] (14) at (8.5,0.75) {\( \begin{smallmatrix} ( 0 \; -1 \; 0 \; 0 \; 0 \\ 0 \; 0 \; 0 \; 0 \; 0) \end{smallmatrix} \)};
 \node [scale=.6] (15) at (9.5,-.25) {\( \begin{smallmatrix} ( 0 \; 0 \; -1 \; 0 \; 0 \\ 0 \; 0 \; 0 \; 0 \; 0) \end{smallmatrix} \)};
 \node [scale=.6] (16) at (7.5,-2.15) {\( \begin{smallmatrix} ( 0 \; 0 \; 0 \; -1 \; 1 \\ -1 \; 0 \; 0 \; 0 \; 1) \end{smallmatrix} \)};
 \node [scale=.6] (17) at (8.5,-1.15) {\( \begin{smallmatrix} ( 0 \; 0 \; 0 \; -1 \; 0 \\ 0 \; 0 \; 0 \; 0 \; 0) \end{smallmatrix} \)};
 \node [scale=.6] (18) at (9.5,-2.15) {\( \begin{smallmatrix} ( 0 \; 0 \; 0 \; 0 \; -1 \\ 0 \; 0 \; 0 \; 0 \; 0) \end{smallmatrix} \)};
 \node [scale=.6] (19) at (9.5,-4.05) {\( \begin{smallmatrix} ( 0 \; 0 \; 0 \; 0 \; 0 \\ -1 \; 0 \; 0 \; 0 \; 0) \end{smallmatrix} \)};
 
 \node [label={[scale=.8,label distance=-1mm]below:\(I_9\)},scale=.6,draw] (20) at (11,-.5) {\( \begin{smallmatrix} ( 0 \; 0 \; 0 \; 0 \; 0 \\ 0 \; -1 \; 1 \; -1 \; 1) \end{smallmatrix} \)};
 \node [scale=.6] (21) at (12,.5) {\( \begin{smallmatrix} ( 0 \; 0 \; 0 \; 0 \; 0 \\ 0 \; -1 \; 0 \; 0 \; 0) \end{smallmatrix} \)};
 \node [scale=.6] (23) at (13,-.5) {\( \begin{smallmatrix} ( 0 \; 0 \; 0 \; 0 \; 0 \\ 0 \; 0 \; -1 \; 0 \; 0) \end{smallmatrix} \)};
 \node [scale=.6] (26) at (13,-2.4) {\( \begin{smallmatrix} ( 0 \; 0 \; 0 \; 0 \; 0 \\ 0 \; 0 \; 0 \; -1 \; 0) \end{smallmatrix} \)};
 
 \node [scale=.6] (30) at (16.5,-.75) {\( \begin{smallmatrix} ( 0 \; 0 \; 0 \; 0 \; 0 \\ 0 \; 0 \; 0 \; 0 \; -1) \end{smallmatrix} \)};

 \draw [->] (0) to (1);
 \draw [->] (1) to (2);
 \draw [->] (1) to (3);
 \draw [->] (2) to (4);
 \draw [->] (3) to (4);
 \draw [->] (4) to (5);
 \draw [->] (3) to (6);
 \draw [->] (4) to (7);
 \draw [->] (5) to (8);
 \draw [->] (6) to (7);
 \draw [->] (7) to (8);
 \draw [->] (8) to (9);
 
 \draw [->] (6) to (10);
 \draw [->] (7) to (11);
 \draw [->] (8) to (13);
 \draw [->] (9) to (16);
 
 \draw [->] (10) to (11);
 \draw [->] (11) to (12);
 \draw [->] (11) to (13);
 \draw [->] (12) to (14);
 \draw [->] (13) to (14);
 \draw [->] (14) to (15);
 \draw [->] (13) to (16);
 \draw [->] (14) to (17);
 \draw [->] (15) to (18);
 \draw [->] (16) to (17);
 \draw [->] (17) to (18);
 \draw [->] (18) to (19);
 
 \draw [->] (16) to (20);
 \draw [->] (17) to (21);
 \draw [->] (18) to (23);
 \draw [->] (19) to (26);

 \draw [->] (20) to (21);
 \draw [->] (21) to (23);
 \draw [->] (23) to (26);
 
 \draw [->] (26) to (30);
 
 \draw [rounded corners] (-.8,0) -- (2,2.8) -- (4.6,0.2) -- (4.6,-4.2) -- (3.4,-4.2) -- cycle;
 \node [scale=.8] at (.3,-1.5) {\(T\)};
\end{tikzpicture} \]

The projective \( P_6 \) is mutable, its replacement being the injective \( I_9 \) --- indeed the left mutation 5-angle is given as the image of the exact sequence
\[ 0 \to P_6 \to P_7 \to P_8 \to P_9 \to I_9 \to 0. \]
We denote by \( T^{\star} \) the cluster tilting object obtained by replacing \( P_6 \) by \( I_9 \).

Now we can apply the formula of Corollary~\ref{cor.main_thm} to calculate the index of all indecomposables with respect to \( T^{\star} \).

For those vectors with a positive entry in the 6th coordinate we use the fact that
\[ \index_T( \Sigma I_9) = (-1,1,0,-1,0,0,1,0,0,0). \]
Thus the formula of Corollary~\ref{cor.main_thm} tells us that if the sixth coordinate is $c>0$, we should replace it by $-c$ (which we will also write in the 6th position, but which now represents a coefficient of \( [I_9] \)), and also to add $(c,-c,0,c,0,0,0,0,0,0)$ to the index.

For a vector with a negative entry in the 6th coordinate we recall that
\[ \index_T(I_9) = (0,0,0,0,0,0,-1,1,-1,1). \]
Thus the formula of Corollary~\ref{cor.main_thm} tells us that if the sixth coordinate is $c<0$, we should to change the sign in the 6th coordinate and interpret it as a coefficient of a different basis vector, and also add \((0,0,0,0,0,0,0,c,-c,c)\) to the index.

With these two rules, it is straightforward to calculate all indices:
\[ \begin{tikzpicture}[scale=.75,baseline=0pt]
 \node [scale=.6] (0) at (0,0) {\( \begin{smallmatrix} ( 1 \; 0 \; 0 \; 0 \; 0 \\ 0 \; 0 \; 0 \; 0 \; 0) \end{smallmatrix} \)};
 \node [scale=.6] (1) at (1,1) {\( \begin{smallmatrix} ( 0 \; 1 \; 0 \; 0 \; 0 \\ 0 \; 0 \; 0 \; 0 \; 0) \end{smallmatrix} \)};
 \node [scale=.6] (2) at (2,2) {\( \begin{smallmatrix} ( 0 \; 0 \; 1 \; 0 \; 0 \\ 0 \; 0 \; 0 \; 0 \; 0) \end{smallmatrix} \)};
 \node [scale=.6] (3) at (2,0) {\( \begin{smallmatrix} ( 0 \; 0 \; 0 \; 1 \; 0 \\ 0 \; 0 \; 0 \; 0 \; 0) \end{smallmatrix} \)};
 \node [scale=.6] (4) at (3,1) {\( \begin{smallmatrix} ( 0 \; 0 \; 0 \; 0 \; 1 \\ 0 \; 0 \; 0 \; 0 \; 0) \end{smallmatrix} \)};
 \node [scale=.6] (5) at (4,0) {\( \begin{smallmatrix} ( 0 \; 0 \; 0 \; 0 \; 0 \\ 1 \; 0 \; 0 \; 0 \; 0) \end{smallmatrix} \)};
 \node [label={[scale=.8,label distance=-1mm]below:\(P_6\)},scale=.6,draw] (6) at (2,-1.9) {\(\begin{smallmatrix} ( 1 \; -1 \; 0 \; 1 \; 0 \\ 0 \; -1 \; 0 \; 0 \; 0) \end{smallmatrix} \)};
 \node [scale=.6] (7) at (3,-.9) {\( \begin{smallmatrix} ( 0 \; 0 \; 0 \; 0 \; 0 \\ 0 \; 0 \; 1 \; 0 \; 0) \end{smallmatrix} \)};
 \node [scale=.6] (8) at (4,-1.9) {\( \begin{smallmatrix} ( 0 \; 0 \; 0 \; 0 \; 0 \\ 0 \; 0 \; 0 \; 1 \; 0) \end{smallmatrix} \)};
 \node [scale=.6] (9) at (4,-3.8) {\( \begin{smallmatrix} ( 0 \; 0 \; 0 \; 0 \; 0 \\ 0 \; 0 \; 0 \; 0 \; 1) \end{smallmatrix} \)};
 
 \node [scale=.6] (10) at (6,0) {\( \begin{smallmatrix} ( 0 \; 0 \; 0 \; 0 \; 0 \\ 0 \; -1 \; 0 \; 0 \; 0) \end{smallmatrix} \)};
 \node [scale=.6] (11) at (7,1) {\( \begin{smallmatrix} ( -1 \; 0 \; 1 \; 0 \; -1 \\ 0 \; 0 \; 1 \; 0 \; 0) \end{smallmatrix} \)};
 \node [scale=.6] (12) at (8,2) {\( \begin{smallmatrix} ( -1 \; 0 \; 0 \; 0 \; 0 \\ 0 \; 0 \; 0 \; 0 \; 0) \end{smallmatrix} \)};
 \node [scale=.6] (13) at (8,0) {\( \begin{smallmatrix} ( 0 \; -1 \; 1 \; 0 \; 0 \\ -1 \; 0 \; 0 \; 1 \; 0) \end{smallmatrix} \)};
 \node [scale=.6] (14) at (9,1) {\( \begin{smallmatrix} ( 0 \; -1 \; 0 \; 0 \; 0 \\ 0 \; 0 \; 0 \; 0 \; 0) \end{smallmatrix} \)};
 \node [scale=.6] (15) at (10,0) {\( \begin{smallmatrix} ( 0 \; 0 \; -1 \; 0 \; 0 \\ 0 \; 0 \; 0 \; 0 \; 0) \end{smallmatrix} \)};
 \node [scale=.6] (16) at (8,-1.9) {\( \begin{smallmatrix} ( 0 \; 0 \; 0 \; -1 \; 1 \\ -1 \; 0 \; 0 \; 0 \; 1) \end{smallmatrix} \)};
 \node [scale=.6] (17) at (9,-.9) {\( \begin{smallmatrix} ( 0 \; 0 \; 0 \; -1 \; 0 \\ 0 \; 0 \; 0 \; 0 \; 0) \end{smallmatrix} \)};
 \node [scale=.6] (18) at (10,-1.9) {\( \begin{smallmatrix} ( 0 \; 0 \; 0 \; 0 \; -1 \\ 0 \; 0 \; 0 \; 0 \; 0) \end{smallmatrix} \)};
 \node [scale=.6] (19) at (10,-3.8) {\( \begin{smallmatrix} ( 0 \; 0 \; 0 \; 0 \; 0 \\ -1 \; 0 \; 0 \; 0 \; 0) \end{smallmatrix} \)};
 
 \node [label={[scale=.8,label distance=-1mm]below:\(I_9\)},scale=.6,draw] (20) at (12,0) {\( \begin{smallmatrix} ( 0 \; 0 \; 0 \; 0 \; 0 \\ 0 \; 1 \; 0 \; 0 \; 0) \end{smallmatrix} \)};
 \node [scale=.6] (21) at (13,1) {\( \begin{smallmatrix} ( 0 \; 0 \; 0 \; 0 \; 0 \\ 0 \; 1 \; -1 \; 1 \; -1) \end{smallmatrix} \)};
 \node [scale=.6] (23) at (14,0) {\( \begin{smallmatrix} ( 0 \; 0 \; 0 \; 0 \; 0 \\ 0 \; 0 \; -1 \; 0 \; 0) \end{smallmatrix} \)};
 \node [scale=.6] (26) at (14,-1.9) {\( \begin{smallmatrix} ( 0 \; 0 \; 0 \; 0 \; 0 \\ 0 \; 0 \; 0 \; -1 \; 0) \end{smallmatrix} \)};
 
 \node [scale=.6] (30) at (16,0) {\( \begin{smallmatrix} ( 0 \; 0 \; 0 \; 0 \; 0 \\ 0 \; 0 \; 0 \; 0 \; -1) \end{smallmatrix} \)};

 \draw [->] (0) to (1);
 \draw [->] (1) to (2);
 \draw [->] (1) to (3);
 \draw [->] (2) to (4);
 \draw [->] (3) to (4);
 \draw [->] (4) to (5);
 \draw [->] (3) to (6);
 \draw [->] (4) to (7);
 \draw [->] (5) to (8);
 \draw [->] (6) to (7);
 \draw [->] (7) to (8);
 \draw [->] (8) to (9);
 
 \draw [->] (6) to (10);
 \draw [->] (7) to (11);
 \draw [->] (8) to (13);
 \draw [->] (9) to (16);
 
 \draw [->] (10) to (11);
 \draw [->] (11) to (12);
 \draw [->] (11) to (13);
 \draw [->] (12) to (14);
 \draw [->] (13) to (14);
 \draw [->] (14) to (15);
 \draw [->] (13) to (16);
 \draw [->] (14) to (17);
 \draw [->] (15) to (18);
 \draw [->] (16) to (17);
 \draw [->] (17) to (18);
 \draw [->] (18) to (19);
 
 \draw [->] (16) to (20);
 \draw [->] (17) to (21);
 \draw [->] (18) to (23);
 \draw [->] (19) to (26);

 \draw [->] (20) to (21);
 \draw [->] (21) to (23);
 \draw [->] (23) to (26);
 
 \draw [->] (26) to (30);
 \draw [rounded corners] (-.6,0.2) -- (2,2.8) -- (10.7,2.8) -- (12.7,0.4) -- (12.7,-.7) -- (11.3,-.7) -- (8.7,2.5) -- (4.6,2.5) -- (4.6,-4.2) -- (3.4,-4.2) -- (3.4,-2.2) -- (1.6,-.4) -- (-.6,-.4) -- cycle;
 \node [scale=.8] at (4.9,1.6) {\(T^{\star}\)};
\end{tikzpicture} \]
\end{example}


\begin{thebibliography}{99}
\bibitem{BMRRT} Aslak Bakke Buan, Robert Marsh, Markus Reineke, Idun Reiten, and Gordana Todorov, \emph{Tilting theory and cluster combinatorics}, Adv.\ Math.~\textbf{204} (2006), no.~2, 572--618. 
\bibitem{DK} Raika Dehy and Bernhard Keller.
\emph{On the combinatorics of rigid objects in 2-Calabi-Yau categories,} 
Int. Math. Res. Not. 2008 (2008), no. 11, Art. ID rnn029.
\bibitem{FT} Sergey Fomin and Dylan Thurston,
  \emph{Cluster algebras and triangulated surfaces Part II: Lambda lengths.}
  Mem. AMS \textbf{255} (2018), no. 1223.
\bibitem{CA4} Sergey Fomin and Andrei Zelevinsky,
  \emph{Cluster algebras IV: coefficients,} Compos. Math. \textbf{143} (2007), 112--164.
\bibitem{GKO} Christof Geiss, Bernhard Keller, and Steffen Oppermann, \emph{\(n\)-angulated categories}, J.\ Reine Angew.\ Math.~\textbf{675} (2013), 101--120.
\bibitem{I_moSubcat} Osamu Iyama, \emph{Higher-dimensional Auslander-Reiten theory on maximal orthogonal subcategories}, Adv.\ Math.~\textbf{210} (2007), no.~1, 22--50.
\bibitem{I_higherAuslander} \bysame, \emph{Cluster tilting for higher Auslander algebras}, Adv.\ Math.~\textbf{226} (2011), no.~1, 1--61.
\bibitem{IO} Osamu Iyama and Steffen Oppermann, \emph{\(n\)-representation-finite algebras and \(n\)-APR tilting}, Trans.\ Amer.\ Math.\ Soc.~\textbf{363} (2011), no.~12, 6575--6614
\bibitem{J1} Peter Jørgensen, \emph{Tropical friezes and the index in higher homological algebra}, Math. Proc. Cambridge Philos. Soc. \textbf{171} (2021), 23--49.
\bibitem{JS1} Peter Jørgensen and Amit Shah, \emph{Grothendieck groups of $d$-exangulated categories and a modified Caldero--Chapoton map}, arXiv:2106.02142.  
\bibitem{K} Bernhard Keller, \emph{On triangulated orbit categories}, Doc.\ Math.~\textbf{10} (2005), 551--581. 
\bibitem{OT} Steffen Oppermann and Hugh Thomas, \emph{Higher-dimensional cluster combinatorics and representation theory}, J.\ Eur.\ Math.\ Soc.~\textbf{14} (2012), no.~6, 1679--1737.
\bibitem{R1} Joseph Reid, \emph{Indecomposable objecst determined by their index in higher homological algebra}, Proc. AMS \textbf{148} (2020), no. 6, 2331--2343.
\bibitem{R2} \bysame, \emph{Tropical duality in $(d+2)$-angulated categories},
  Appl. Categ. Structures \textbf{29} (2021), 529--545.
  \bibitem{R3} \bysame, \emph{Modules determined by their composition factors in higher homological algebra},arXiv:2007.06350.

\bibitem{W} Nicholas Williams, \emph{Quiver combinatorics for higher-dimensional triangulations}, arXiv:2112.09189.
\end{thebibliography}
\end{document}